\documentclass[10pt,reqno]{amsart}

\usepackage{enumerate}
\usepackage{amsmath, amssymb, amsthm,amsfonts}
\usepackage{mathrsfs}
\usepackage{esint}
\usepackage{xcolor}
\usepackage{mathtools}
\usepackage{hyperref}
\usepackage{bm}
\usepackage{accents}
\usepackage[foot]{amsaddr}
\usepackage{todonotes}

\makeatletter

\numberwithin{equation}{section}

\newtheorem{thm}{Theorem}[section]
\newtheorem{theorem}[thm]{Theorem}
\newtheorem{lemma}[thm]{Lemma}

\newtheorem{proposition}[thm]{Proposition}
\theoremstyle{definition}

\newtheorem{definition}[thm]{Definition}

\newtheorem{assumption}[thm]{Assumption}
\theoremstyle{remark}

\newtheorem{remark}[thm]{\bf{Remark}}

\newcommand\aint{-\hspace{-0.38cm}\int}

\newcommand\bH{\mathbb{H}}

\newcommand\bL{\mathbb{L}}
\newcommand\bM{\mathbb{M}}
\newcommand\bN{\mathbb{N}}

\newcommand\bR{\mathbb{R}}

\newcommand\fL{\mathbf{L}}

\newcommand\fH{\mathbf{H}}

\newcommand\cA{\mathcal{A}}
\newcommand\cB{\mathcal{B}}
\newcommand\cC{\mathcal{C}}

\newcommand\cL{\mathcal{L}}

\newcommand\sM{\mathscr{M}}

\begin{document}

\title[]{Weighted mixed-norm estimates for fractional parabolic equations with space-time nonlocal operators}

\author{Hongjie Dong$^{1}$}
\address{$^1$ Division of Applied Mathematics, Brown University, 182 George Street, Providence, RI 02912, USA}
\email{Hongjie\_Dong@brown.edu}
\thanks{H. Dong was partially supported by the NSF under agreement DMS-2350129.}

\author{Junhee Ryu$^{2}$}
\address{$^2$ School of Mathematics, Korea Institute for Advanced Study, 85 Hoegi-ro, Dongdaemun-gu, Seoul, 02455, Republic of Korea}
\email{junhryu@kias.re.kr}
\thanks{J. Ryu was supported by a KIAS Individual Grant (MG101501) at Korea Institute for Advanced Study and by the National Research Foundation of Korea (NRF) grant funded by the Korea government (MSIT) (No. RS-2026-25477288).}

\subjclass[2020]{35B65, 35R11, 26A33, 47G20}

\keywords{Space-time nonlocal equations, weighted mixed-norm spaces, existence
and uniqueness}

\begin{abstract}
We establish weighted mixed-norm estimates for fractional parabolic equations
\begin{equation*} 
\partial_t^\alpha u=Lu-\lambda u+f \text{ in } (0,T)\times\mathbb{R}^d,
\end{equation*}
with nonlocal operators in both time and space. Here, $\partial_t^\alpha$ is the Caputo derivative of order $\alpha\in(0,1)$, and $L$ is a spatially nonlocal operator of order $\sigma\in(0,2)$ whose kernel is merely measurable in time. We also obtain the corresponding estimates in the odd mixed-norm spaces, where the inner integration is taken in time and the outer one in space. The estimates are robust in the limit $\alpha\to1$ and $\sigma\to2$. We also establish unique solvability when either $T<\infty$ or $\lambda>0$. The proof is based on a direction-by-direction extension argument.
\end{abstract}

\maketitle

\section{Introduction}

In this paper, we study
\begin{equation} \label{eq_intro}
    \partial_t^\alpha u=Lu-\lambda u+f \quad \text{ in } \bR_T^d
\end{equation}
with the zero initial condition, where $T\in(0,\infty]$, $\bR_T^d:=(0,T)\times\bR^d$, and $\lambda\geq0$. Here, $\partial_t^\alpha$ is the Caputo fractional derivative of order $\alpha\in(0,1)$, and $L$ is a nonlocal operator of order $\sigma\in(0,2)$ given by
\begin{equation} \label{eq_op}
    Lu(t,x):=\int_{\bR^d}\left(u(t,x+y)-u(t,x)-y^{(\sigma)}\cdot\nabla_xu(t,x)\right)K(t,y)dy,
\end{equation}
where
\begin{equation*}
    y^{(\sigma)}:=y1_{\sigma\in(1,2)}+y1_{\sigma=1}1_{|y|\leq1}.
\end{equation*}
The most well-known example of $L$ is the fractional Laplacian $-(-\Delta)^{\sigma/2}$, corresponding to the case $K(t,y)=c|y|^{-d-\sigma}$ for some $c>0$. We impose a natural upper bound and a nondegeneracy condition on $K$, together with the usual cancellation condition when $\sigma=1$. We refer the reader to Section \ref{sec_main} for the precise assumptions and definitions.

Fractional parabolic equations naturally arise in models of anomalous diffusion, where temporal memory effects and spatial jumps are represented by the Caputo derivative and the spatial nonlocal operator, respectively.
Such space-time fractional equations are also related to scaling limits of continuous-time random walks (see \cite{GM03}).

There is an extensive literature on Sobolev regularity theory for equations that are nonlocal in either time or space. 
Time-fractional equations with local spatial operators of the form
\begin{equation*}
    \partial_t^\alpha u(t,x) = a_{ij}(t,x)D_{ij}u(t,x)+f(t,x)
\end{equation*}
have been studied in \cite{CP92,DK19,DK21,DK23,HKP20,KKL17,D23,Z05}. Different assumptions on the coefficients and various mixed-norm settings were considered in these works. Among these, the most recent result \cite{DK23} extended the theory to equations with partially small mean oscillation (SMO) coefficients in weighted mixed-norm spaces. We also refer the reader to \cite{DL22}, where the case $\alpha\in(1,2)$ is treated.

Parabolic equations that are local in time but nonlocal in space, such as
\begin{equation*}
    \partial_tu=Lu+f,
\end{equation*}
have also been studied extensively; see, for instance, \cite{CKP24,DJK23,DL23,DR25,JSS25,MP19,MP92,MP14,Z13}. In particular, it was shown in \cite{DL23} that weighted estimates are available for fractional Laplacian-like operators with kernels satisfying $K(t,y)\approx |y|^{-d-\sigma}$, while the results of \cite{DR25} suggest that, in a broad sense, such estimates may fail for singular kernels.

Equations that are nonlocal in both time and space, of the form
\begin{equation*}
    \partial_t^\alpha u= Lu + f,
\end{equation*}
have also been studied in \cite{CKPS25,DL23,KPR21,YZ25} under various assumptions on the nonlocal operator $L$. In particular, \cite{DL23} allowed $L$ to be merely measurable in time in the unweighted $L_p$ setting. On the other hand, \cite{YZ25} established weighted mixed-norm estimates for time-independent operators $L$.

These results motivate us to study such equations with a large class of time-dependent operators in weighted mixed-norm spaces. This extension, however, is far from straightforward. In particular, the approach used in \cite{DL23} for mixed-norm estimates when $\alpha=1$ is not directly applicable to the case $\alpha\in(0,1)$. 
A main difficulty comes from the anisotropic structure of the equation, with distinct nonlocal operators in the temporal and spatial variables. As a result, localization produces separate nonlocal tails in each variable, which cannot be treated as a single nonlocal tail.
A further difficulty comes from the time dependence of the spatial operator. In the present paper, we allow the kernel to be merely measurable in time. More precisely, for each $y\in\bR^d$,
\begin{equation*}
    t\mapsto K(t,y)
\end{equation*}
is assumed to be merely measurable.

In \cite{YZ25}, Sobolev estimates are derived from a representation formula and pointwise estimates of the corresponding fundamental solution. Such an approach is not applicable to general time-dependent operators because, to the best of our knowledge, a general representation formula in terms of the fundamental solution is not known.
A main novelty of this paper is that we develop a kernel-free approach to obtain weighted mixed-norm estimates.

We now describe the main results of the paper. Let $p,q\in(1,\infty)$ and $\omega(t,x)=\omega_1(t)\omega_2(x)$, where $\omega_1\in A_p(\bR)$ and $\omega_2\in A_q(\bR^d)$. We consider the weighted mixed-norm spaces
\begin{equation*}
    \bL_{p,q,\omega}(T) := L_p\left((0,T),\omega_1dt; L_q(\bR^d,\omega_2dx)\right)
\end{equation*}
and the odd spaces
\begin{equation*}
    \bm{\fL}_{q,p,\omega}(T) := L_q\left(\bR^d,\omega_2dx; L_p((0,T),\omega_1dt)\right).
\end{equation*}
Our goal is to prove the weighted mixed-norm estimate
\begin{equation} \label{eq_intro_est}
    \|\partial_t^\alpha u\|_{\bL_{p,q,\omega}(T)} + \|(-\Delta)^{\sigma/2}u\|_{\bL_{p,q,\omega}(T)} + \lambda\|u\|_{\bL_{p,q,\omega}(T)} \leq N\|f\|_{\bL_{p,q,\omega}(T)}
\end{equation}
and its counterpart in the odd mixed-norm space,
\begin{equation} \label{eq_intro_est_odd}
    \|\partial_t^\alpha u\|_{\bm{\fL}_{q,p,\omega}(T)} + \|(-\Delta)^{\sigma/2}u\|_{\bm{\fL}_{q,p,\omega}(T)} + \lambda\|u\|_{\bm{\fL}_{q,p,\omega}(T)} \leq N\|f\|_{\bm{\fL}_{q,p,\omega}(T)}.
\end{equation}
The constants $N$ in \eqref{eq_intro_est} and \eqref{eq_intro_est_odd} are independent of $\lambda$ and $T$, and are robust in the sense that they remain bounded as $\alpha\to1$ and $\sigma\to2$.
Moreover, if either $T<\infty$ or $\lambda>0$, then for any $f$ in $\bL_{p,q,\omega}(T)$ or $\bm{\fL}_{q,p,\omega}(T)$, equation \eqref{eq_intro} admits a unique solution with zero initial condition in the corresponding space.

For the proof of the a priori estimates, we introduce a direction-by-direction extension procedure. Instead of treating the mixed space-time norms at once, we fix the exponent and weight in one direction and extend the estimate in the other direction. In this way, the temporal and the spatial nonlocal tails are handled separately.

We first develop an extension principle in the time direction. Fix the spatial exponent $q$ and the spatial weight $\omega_2$, and regard the solution as a function of time with values in the Banach space $X:=L_q(\bR^d,\omega_2dx)$.
We prove that if the estimate and solvability hold for one temporal exponent $p_0$, then they hold for every $p\in(1,\infty)$ and every temporal weight $\omega_1\in A_p(\bR)$. This result can be viewed as a time-fractional analogue of the extrapolation principle for evolution equations in \cite[Theorem 17.2.39]{HNVW23}. To this end, we first use a level set argument together with the ``crawling of ink spots lemma" to cover the full range of exponents in the unweighted case in time. We then derive a mean oscillation estimate through an iteration argument. Finally, we apply the weighted sharp function and maximal function theorems only in the time variable to obtain the desired weighted mixed-norm estimates.

To extend the spatial exponent and spatial weight, we reverse the roles of time and space and consider the odd mixed-norm space. In this case, with the temporal exponent $p$ and temporal weight $\omega_1$ fixed, we regard
\begin{equation*}
    x\mapsto u(\cdot,x)
\end{equation*}
as a function on $\bR^d$ with values in $Y:=L_p((0,T),\omega_1dt)$.
We then use an analogous argument in the spatial variables, which allows us to derive the corresponding spatial estimates and extend the result to general spatial exponents and Muckenhoupt weights. This yields the desired weighted mixed-norm estimates in the odd mixed-norm space.
Finally, since the standard and odd mixed-norm spaces coincide when $p=q$, the result in the standard mixed-norm space follows by combining the result of the spatial extension in the odd space with the time-direction extension.

The remainder of the paper is organized as follows. In Section \ref{sec_main}, we state the assumptions on the operators, introduce the weighted mixed-norm spaces, and present the main results. In Section \ref{sec_time}, keeping the spatial exponent and spatial weight fixed, we extend the estimates and solvability in the time direction. In Section \ref{sec_space}, we complete the theory in the odd Sobolev space by extending the result to general spatial exponents and spatial weights. In Section \ref{sec_proof}, we combine the temporal and spatial extension results to prove the main theorem in the standard Sobolev space. In Appendix \ref{sec_appen}, we present the auxiliary results used in the proofs.

We conclude this section by collecting the notation and conventions used throughout the paper.
 We use ``$:=$'' or ``$=:$'' to denote a definition.
  We denote the set of natural numbers by $\bN$, and write $\bN_0:=\bN\cup\{0\}$. As usual, $\bR^d$ stands for the Euclidean space of points $x=(x_1,\dots,x_d)$, and we write $\bR:=\bR^1$.
  For $r>0$, we denote by $B_r:=\{x\in \bR^d: |x|<r\}$ the spatial ball centered at the origin with radius $r$.
We use $D^n_x u$ to denote the partial derivatives of order $n\in\bN_0$ with respect to the space variables, and $D_xu:=D_x^1u$. 
For $E\subset \bR^{d}$ such that $|E|<\infty$, we denote by
\begin{equation*}
\aint_{E} f(x) dx := (f)_E := \frac{1}{|E|}\int_E f(x) dx
\end{equation*}
the average of $f$ over a measurable set $E$.
For $D\subset \bR^{d+1}$, we denote by $C_c^\infty(D)$ the collection of all infinitely differentiable functions with compact support in $D$.

\section{Main results} \label{sec_main}

Throughout this paper, we always assume that $d\geq1$, and the kernel of any operator of the form \eqref{eq_op} is measurable in both $t$ and $y$.

We impose the following assumptions on $K$, the kernel of \eqref{eq_op}. First, we state the nondegeneracy condition.

\begin{assumption} \label{as_nonde}
    (i) There exist $\nu,\Lambda>0$ such that
\begin{equation*}
    0\leq K(t,y)\leq (2-\sigma) \Lambda |y|^{-d-\sigma}
\end{equation*}
and
\begin{equation*}
    \int_{|\xi\cdot y|\leq1} |\xi\cdot y|^2 K(t,y) dy \geq (2-\sigma) \nu |\xi|^{\sigma}.
\end{equation*}

$(ii)$ If $\sigma=1$, then 
\begin{equation*}
    \int_{r_1\leq |y|\leq r_2} yK(t,y) dy=0, \quad 0<r_1<r_2<\infty.
\end{equation*}
\end{assumption}

For the boundedness of the operators, we only need the following assumption.

\begin{assumption} \label{as_upper}
 There exists $\Lambda>0$ such that
\begin{equation} \label{upper}
    0\leq K(t,y)\leq (2-\sigma) \Lambda |y|^{-d-\sigma},
\end{equation}
and if $\sigma=1$, then 
\begin{equation} \label{cancel}
    \int_{r_1\leq |y|\leq r_2} yK(t,y) dy=0, \quad 0<r_1<r_2<\infty.
\end{equation}
\end{assumption}

We next introduce some notation used in this paper.
For $\alpha\in(0,\infty)$ and $v\in L_1((S,T))$, the Riemann-Liouville fractional integral of order $\alpha$ is defined as
    \begin{equation*}
        I_S^\alpha v(t) := \frac{1}{\Gamma(\alpha)} \int_{S}^{t} (t-s)^{\alpha-1} v(s) ds, \quad S\leq t\leq T.
    \end{equation*}
    We also define $I^0 v := v$.
    For $\alpha\in(0,1)$, the Riemann-Liouville fractional derivative of order $\alpha$ is defined as
    \begin{equation*}
        D_{t,S}^\alpha v(t) := \frac{d}{dt}I_{S}^{1-\alpha}v(t) = \frac{1}{\Gamma(1-\alpha)} \frac{d}{dt}\int_S^t (t-s)^{-\alpha} v(s) ds,
    \end{equation*}
    provided that $I_{S}^{1-\alpha}v$ is absolutely continuous.
    We also define the Caputo fractional derivative of order $\alpha$ by
    \begin{equation*}
        \partial_{t,S}^{\alpha} v(t) := D_{t,S}^{\alpha}(v(\cdot)-v(S))(t) = \frac{1}{\Gamma(1-\alpha)} \frac{d}{dt}\int_S^t (t-s)^{-\alpha} (v(s)-v(S)) ds.
    \end{equation*}
    Note that if $v(S)=0$, then $D_{t,S}^{\alpha}v=\partial_{t,S}^\alpha v$.
    When $S=0$, we omit $S$, that is, $\partial_t^\alpha v:= \partial_{t,0}^\alpha v$.

We introduce spaces with Muckenhoupt weights.
A locally integrable nonnegative function $\omega$ on $\bR^d$ is said to belong to the $A_p(\bR^d)$ Muckenhoupt class of weights if
\begin{equation} \label{eqAp}
    [\omega]_{A_p(\bR^d)}:= \sup_{r>0,x_0\in\bR^d} \left( \aint_{B_r(x_0)} \omega(x) dx \right) \left( \aint_{B_r(x_0)} (\omega(x))^{1/(1-p)} dx \right)^{p-1} <\infty.
\end{equation}
Let $p,q\in(1,\infty)$ and $K_0>0$. Then in $\bR^{d+1}$, we write $[\omega]_{p,q} \leq K_0$ if $\omega(t,x)=\omega_1(t)\omega_2(x)$ for some $\omega_1\in A_p(\bR)$ and $\omega_2\in A_q(\bR^d)$ satisfying
\begin{equation*}
    [\omega_1]_{A_p(\bR)}, [\omega_2]_{A_q(\bR^d)} \leq K_0.
\end{equation*}
We define 
\begin{equation*}
    L_{p,\omega_1}(T):= L_p((0,T),\omega_1 dt), \quad L_{q,\omega_2}(\bR^d) := L_q(\bR^d, \omega_2 dx).
\end{equation*}
For $-\infty\leq S< T\leq \infty$, $\omega_1\in A_p(\bR)$, and $\omega_2\in A_q(\bR^d)$, we write
\begin{equation*}
    \bL_{p,q,\omega}(S,T) := L_p((S,T),\omega_1dt;L_{q}(\bR^d,\omega_2 dx))
\end{equation*}
and
\begin{equation*}
\bm{\fL}_{q,p,\omega}(S,T):= L_{q}(\bR^d,\omega_2 dx;L_p((S,T),\omega_1dt)).
\end{equation*}
For $S=0$, we write $\bL_{p,q,\omega}(T):=\bL_{p,q,\omega}(0,T)$ and $\bm{\fL}_{q,p,\omega}(T) := \bm{\fL}_{q,p,\omega}(0,T)$.
We denote $\bL_{p,q}(S,T):=\bL_{p,q,1}(S,T)$ and $\bm{\fL}_{q,p}(S,T):=\bm{\fL}_{q,p,1}(S,T)$. Here, we also omit $S$ if $S=0$.

\begin{remark} \label{remCc}
    We briefly show that $C_c^\infty((S,T)\times \bR^d)$ is dense in both $\bL_{p,q,\omega}(S,T)$ and $\bm{\fL}_{q,p,\omega}(S,T)$.

    By the dominated convergence theorem, $f1_{(S+1/n,T-1/n)\times B_{n}} \to f$ as $n\to \infty$ in either of the above spaces. In the case when $S=-\infty$, we can consider $f1_{(-n,T-1/n)\times B_n}$. The case $T=\infty$ can be handled similarly.
    Thus, we may assume that $f$ is compactly supported. By extending $f$ to be zero outside $(S,T)\times \bR^d$, it suffices to consider the case when $S=-\infty$ and $T=\infty$.
    Let $\eta\in C_c^\infty((0,1))$ and $\varphi\in C_c^\infty(B_1)$ be nonnegative functions such that $\int_{\bR} \eta dt=1$ and $\int_{\bR^d} \varphi dx =1$. For $\varepsilon>0$, define $\eta^\varepsilon(t):=\varepsilon^{-1}\eta(t/\varepsilon)$ and $\varphi^\varepsilon(x):=\varepsilon^{-d}\varphi(x/\varepsilon)$. Consider the following space-time mollification
    \begin{equation*}
        f^\varepsilon(t,x) := \int_{\bR^{d+1}} f(s,y) \eta^\varepsilon(t-s) \varphi^\varepsilon(x-y) dyds.
    \end{equation*}
    One can easily find that
    \begin{align*}
        |f^\varepsilon(t,x)| \leq N\aint_{t-\varepsilon}^{t+\varepsilon} \aint_{B_\varepsilon(x)} |f(s,y)| dyds.
    \end{align*}
    Consequently,
    \begin{align*}
        |f^\varepsilon(t,x)| \leq \bM_t\bM_x f(t,x), \qquad |f^\varepsilon(t,x)| \leq \bM_x\bM_t f(t,x).
    \end{align*}
    Here, $\bM_t$ denotes the standard maximal operator in the time variable, defined by
\begin{equation*}
    \bM_t h(t_0) := \sup_{(t-R,t+R)\ni t_0} \aint_{t-R}^{t+R} |h(s)| ds,
\end{equation*}
and $\bM_x$ denotes the uncentered Hardy-Littlewood maximal operator in the spatial variables, defined by
\begin{equation*}
    \bM_x h(x_0):=\sup_{B_R(x)\ni x_0}\aint_{B_R(x)}|h(y)| dy.
\end{equation*}
Since $f^\varepsilon\to f$ almost everywhere as $\varepsilon\to0$, the weighted maximal function theorem (see \cite[Theorem 2.2]{DK18}) provides an integrable dominating function in each of the two mixed-norm spaces.
Hence, by the dominated convergence theorem, we see that $f^\varepsilon\to f$ as $\varepsilon\to0$ in both $\bL_{p,q,\omega}(S,T)$ and $\bm{\fL}_{q,p,\omega}(S,T)$.
Since $f$ is compactly supported, $f^\varepsilon\in C_c^\infty((S,T)\times\bR^d)$ for all sufficiently small $\varepsilon>0$.
The proof is completed.
\end{remark}

The operator $\partial_{t,S}^\alpha$ defined above is for sufficiently regular functions. We now extend this definition to functions in weighted mixed-norm spaces in the weak sense. For $u$ in $\bL_{p,q,\omega}(S,T)$ or $\bm{\fL}_{q,p,\omega}(S,T)$, we say that $\partial_{t,S}^\alpha u$ exists in the weak sense if there exists a function $g$ in the same space as $u$ such that for any $\varphi\in C_c^\infty((S,T)\times\bR^d)$,
\begin{equation} \label{eq7291255}
    \int_S^T \int_{\bR^d} g(t,x) \varphi(t,x) dxdt = -\int_S^T \int_{\bR^d} I_S^{1-\alpha}u(t,x) \partial_t\varphi(t,x) dxdt.
\end{equation}
In this case, we write $\partial_{t,S}^\alpha u = g$.
The spatial operator $L$ is understood analogously in the weak sense. More precisely, we say that $Lu$ exists in the weak sense if there exists a function $h$ in the same space as $u$ such that for every $\varphi\in C_c^\infty((S,T)\times\bR^d)$,
\begin{equation*}
\int_S^T\int_{\bR^d} h(t,x)\varphi(t,x) dxdt = \int_S^T\int_{\bR^d} u(t,x)L^*\varphi(t,x) dxdt,
\end{equation*}
where $L^*$ is the operator with the kernel $K(t,-y)$. In this case, we write $Lu=h$.

We now introduce the standard and odd Sobolev spaces. In particular, the spaces $\bH_{p,q,\omega,0}^{\alpha,\sigma}(S,T)$ and $\bm{\fH}_{q,p,\omega,0}^{\alpha,\sigma}(S,T)$ are defined to deal with the zero initial condition.

\begin{definition}
    Let $p,q\in(1,\infty)$, $\alpha\in(0,1)$, $\sigma\in(0,2)$, $-\infty<S<T\leq \infty$, and $\omega$ be a weight such that $[\omega]_{p,q}\leq K_0$.
    
    $(i)$ We write
\begin{equation*}
    \bH_{p,q,\omega}^{\alpha,\sigma}(S,T) := \{ u\in \bL_{p,q,\omega}(S,T): \partial_{t,S}^{\alpha}u, (-\Delta)^{\sigma/2}u \in \bL_{p,q,\omega}(S,T) \},
\end{equation*}
where the norm is given by
\begin{equation*}
    \|u\|_{\bH_{p,q,\omega}^{\alpha,\sigma}(S,T)} := \|u\|_{\bL_{p,q,\omega}(S,T)} + \|\partial_{t,S}^{\alpha}u\|_{\bL_{p,q,\omega}(S,T)} + \|(-\Delta)^{\sigma/2}u\|_{\bL_{p,q,\omega}(S,T)}.
\end{equation*}
Similarly, we define the odd Sobolev space
\begin{equation*}
    \bm{\fH}_{q,p,\omega}^{\alpha,\sigma}(S,T) := \{u\in \bm{\fL}_{q,p,\omega}(S,T): \partial_{t,S}^\alpha u,(-\Delta)^{\sigma/2}u \in \bm{\fL}_{q,p,\omega}(S,T) \},
\end{equation*}
equipped with the norm
\begin{equation*}
    \|u\|_{\bm{\fH}_{q,p,\omega}^{\alpha,\sigma}(S,T)} := \|u\|_{\bm{\fL}_{q,p,\omega}(S,T)} + \|\partial_{t,S}^{\alpha}u\|_{\bm{\fL}_{q,p,\omega}(S,T)} + \|(-\Delta)^{\sigma/2}u\|_{\bm{\fL}_{q,p,\omega}(S,T)}.
\end{equation*}

$(ii)$ For $u\in \bH_{p,q,\omega}^{\alpha,\sigma}(S,T)$, we say that $u \in \bH_{p,q,\omega,0}^{\alpha,\sigma}(S,T)$ if \eqref{eq7291255} is satisfied for all $\varphi\in C_c^\infty([S,T)\times \bR^d)$. The space $\bm{\fH}_{q,p,\omega,0}^{\alpha,\sigma}(S,T)$ is defined in a similar way.
\end{definition}

Next, we prove the density result in $\bH_{p,q,\omega,0}^{\alpha,\sigma}(S,T)$ and $\bm{\fH}_{q,p,\omega,0}^{\alpha,\sigma}(S,T)$. For function spaces corresponding to spatially local operators, we refer the reader to \cite[Proposition 3.6]{KW25}.

\begin{lemma} \label{lem_dense}
    Let $u\in\bH_{p,q,\omega,0}^{\alpha,\sigma}(S,T)$ or $u\in\bm{\fH}_{q,p,\omega,0}^{\alpha,\sigma}(S,T)$. Then there exists a sequence of functions $u_n \in C_c^\infty([S,T]\times\bR^d)$ with $u_n(S,\cdot)=0$ such that $u_n\to u$ in the corresponding space.
\end{lemma}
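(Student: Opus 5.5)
A natural plan is the standard three-step approximation: truncation, mollification, and cut-off. The mollification must be done in a way that keeps the zero initial condition. Since $(-\Delta)^{\sigma/2}$ and $\partial_{t,S}^\alpha$ both commute with spatial convolution, and the equation is translation invariant in time, we first reduce to $S=0$.

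The first step is to extend $u$ by zero to $t<0$. The condition that \eqref{eq7291255} holds for all $\varphi\in C_c^\infty([0,T)\times\bR^d)$ says precisely that the zero extension $\bar u$ satisfies $\partial_{t,-\infty}^\alpha\bar u=\bar g$ in the weak sense on $(-\infty,T)$, where $\bar g$ is the zero extension of $g=\partial_t^\alpha u$. Indeed, $I^{1-\alpha}_{-\infty}\bar u=I^{1-\alpha}_0u$ for $t>0$ and vanishes for $t<0$. Test functions crossing $t=0$ can be handled by splitting them with a cut-off near $0$, using the test class $C_c^\infty([0,T)\times\bR^d)$.

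Next we mollify. Take $\eta^\varepsilon$ supported in $(0,\varepsilon)$, exactly as in Remark \ref{remCc}, together with a spatial mollifier $\varphi^\varepsilon$, and set $u^\varepsilon=\bar u*(\eta^\varepsilon\varphi^\varepsilon)$. Because $\eta^\varepsilon$ is supported in positive times, $u^\varepsilon(t,\cdot)$ depends only on $\bar u$ at earlier times, so $u^\varepsilon=0$ for $t\le0$. Convolution in $(t,x)$ commutes with $\partial_{t,-\infty}^\alpha$ (weakly) and with $(-\Delta)^{\sigma/2}$, so $\partial_t^\alpha u^\varepsilon=\bar g*(\eta^\varepsilon\varphi^\varepsilon)$ and $(-\Delta)^{\sigma/2}u^\varepsilon=((-\Delta)^{\sigma/2}u)*(\eta^\varepsilon\varphi^\varepsilon)$. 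The maximal-function domination and dominated convergence argument of Remark \ref{remCc} then gives convergence of all three components in either mixed-norm space. The resulting $u^\varepsilon$ is smooth, but it is not compactly supported in $x$, and also not in $t$ when $T=\infty$. If $T<\infty$, we first restrict the problem suitably: the mollified function on $[0,T]$ only uses values of $u$ on $[0,T)$, which is fine.

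The last step is the cut-off in space and in large time, and this is the main obstacle. Multiplying by $\zeta_R(x)=\zeta(x/R)$, we must control the commutator
\[
(-\Delta)^{\sigma/2}(\zeta_Ru)-\zeta_R(-\Delta)^{\sigma/2}u .
\]
Its kernel form is bounded by $R^{-\sigma}$ times a maximal-type operator applied to $u$, plus the term $\int (u(x+y)-u(x))(\zeta_R(x+y)-\zeta_R(x))|y|^{-d-\sigma}dy$. This term is dominated pointwise by $N R^{-\sigma'}\bM_x u$-type quantities, using $|\zeta_R(x+y)-\zeta_R(x)|\le\min(1,|y|/R)$. The weighted maximal theorem (\cite[Theorem 2.2]{DK18}) in each mixed space then shows that the commutator tends to $0$ as $R\to\infty$.

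For the time cut-off $\chi(t/R)$ with $\chi=1$ near $[0,1]$, we use
\[
\partial_t^\alpha(\chi_Ru)-\chi_R\partial_t^\alpha u=\frac{\alpha}{\Gamma(1-\alpha)}\int_0^t\frac{\chi_R(t)-\chi_R(s)}{(t-s)^{1+\alpha}}u(s)\,ds
\]
(up to a sign and normalization). This is bounded by $N R^{-\alpha}\bM_t u$ plus a tail term that is controlled similarly. The weighted maximal theorem in $t$ makes it vanish as $R\to\infty$.

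Combining the three steps with a diagonal argument yields $u_n\in C_c^\infty([S,T]\times\bR^d)$ with $u_n(S,\cdot)=0$ and $u_n\to u$ in the corresponding space. The weighted commutator estimates in the mixed and odd-order norms are the delicate part. I would prove them pointwise via the maximal functions $\bM_t$ and $\bM_x$ so that both orders of integration are handled at once.
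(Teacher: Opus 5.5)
Your proposal is correct. Its first step matches the paper: extend by zero, then mollify in space-time with a kernel supported in positive times, so that $u^\varepsilon$ vanishes near $t=S$ and commutes with $\partial_{t,S}^\alpha$ and $(-\Delta)^{\sigma/2}$. The paper cites \cite[Proposition 3.2]{DK19} for this commutation. A small simplification: no splitting of test functions is needed in the extension step. The restriction of any $\varphi\in C_c^\infty((-\infty,T)\times\bR^d)$ to $[0,T)\times\bR^d$ is already admissible in \eqref{eq7291255}.

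Where you genuinely differ is the spatial cut-off. The paper never estimates the nonlocal commutator. It uses the weighted interpolation inequality $\|(-\Delta)^{\sigma/2}v\|_{L_{q,\omega_2}}\le N(\|v\|_{L_{q,\omega_2}}+\|\Delta v\|_{L_{q,\omega_2}})$ from \cite{SV20} to bound the $\bH^{\alpha,\sigma}$-norm by the $\bH^{\alpha,2}$-norm. In that norm, multiplying by $\eta_n(x)$ only produces the local terms $2\nabla\eta_n\cdot\nabla u^\varepsilon+u^\varepsilon\Delta\eta_n$. The odd space is then handled by extrapolating this inequality from the case $p=q$. You instead bound $(-\Delta)^{\sigma/2}(\zeta_Ru^\varepsilon)-\zeta_R(-\Delta)^{\sigma/2}u^\varepsilon$ pointwise by maximal functions. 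This treats both orders of integration at once and needs neither the interpolation inequality nor extrapolation, at the price of a kernel computation.

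That computation needs one correction for $\sigma\in[1,2)$. Using only $|\zeta_R(x+y)-\zeta_R(x)|\le\min(1,|y|/R)$ and $|u(x+y)-u(x)|\le|u(x+y)|+|u(x)|$ leaves $\int_{|y|<1}|y|^{1-d-\sigma}\,dy$, which diverges. Near $y=0$ you must also use $|u^\varepsilon(x+y)-u^\varepsilon(x)|\le|y|\int_0^1|\nabla u^\varepsilon(x+sy)|\,ds$, so a maximal function of $\nabla u^\varepsilon$ enters. It is controlled because $|\nabla u^\varepsilon|\le N\varepsilon^{-1}\bM_t\bM_x\bar u$. This is why the cut-off must come after the mollification, with $\varepsilon$ fixed. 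The resulting decay is $R^{-\sigma}$, $R^{-1}\log R$, and $R^{-1}$ for $\sigma<1$, $\sigma=1$, and $\sigma>1$ respectively.

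Your additional time cut-off, with commutator bounded by $N(\alpha)R^{-\alpha}\bM_t u^\varepsilon$, is a real plus. When $T=\infty$, the paper's $u^\varepsilon\eta_n$ is not compactly supported in time, and the paper leaves that truncation implicit.
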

\begin{proof}
    As in Remark \ref{remCc}, we consider a space-time mollification of $u$, say $u^\varepsilon$, which is infinitely differentiable in $[S,T]\times \bR^d$. By investigating the proof of \cite[Proposition 3.2]{DK19}, one can deduce that $u^\varepsilon(S,x)=0$ and
    \begin{equation} \label{eq8072150}
        (\partial_{t,S}^\alpha u)^\varepsilon = \partial_{t,S}^\alpha u^\varepsilon, \qquad ((-\Delta)^{\sigma/2}u)^\varepsilon = (-\Delta)^{\sigma/2}u^\varepsilon.
    \end{equation}
    Thus, we see that $u^\varepsilon \to u$ in the corresponding space.

Although $u^\varepsilon$ is smooth, it is not necessarily compactly supported. We next show that $u^\varepsilon$ can be approximated by smooth compactly supported functions.
    By an interpolation inequality (see e.g. \cite[Theorem 1.2]{SV20}),
    \begin{equation} \label{eq7292333}
        \|(-\Delta)^{\sigma/2}u^\varepsilon(t,\cdot)\|_{L_{q,\omega_2}(\bR^d)} \leq N\left( \|u^\varepsilon(t,\cdot)\|_{L_{q,\omega_2}(\bR^d)} + \|\Delta u^\varepsilon(t,\cdot)\|_{L_{q,\omega_2}(\bR^d)} \right).
    \end{equation}
    Thus, for any $\omega_1\in A_p(\bR)$, by raising both sides of this inequality to the power of $p$, multiplying by $\omega_1(t)$, and integrating with respect to $t$,
    \begin{align} \label{eq7291548}
    \|u^\varepsilon\|_{\bH_{p,q,\omega}^{\alpha,\sigma}(S,T)} &\leq N \|u^\varepsilon\|_{\bH_{p,q,\omega}^{\alpha,2}(S,T)} \nonumber
    \\
    &:= N \left(\|u^\varepsilon\|_{\bL_{p,q,\omega}(S,T)} + \|\partial_{t,S}^{\alpha}u^\varepsilon\|_{\bL_{p,q,\omega}(S,T)} + \|\Delta u^\varepsilon\|_{\bL_{p,q,\omega}(S,T)}\right).
\end{align}
Let $\eta_n \in C_c^\infty(B_{2n})$ be a sequence of cutoff functions such that $\eta_n=1$ in $B_n$, and $|D_x^k\eta_n|\leq Nn^{-k}$. Then one can verify that $u^\varepsilon\eta_n\to u^\varepsilon$ in $\bH_{p,q,\omega}^{\alpha,2}(S,T)$. Since $u^\varepsilon\eta_n \in C_c^\infty([S,T]\times \bR^d)$ and $(u^\varepsilon\eta_n)(S,x)=0$, \eqref{eq7291548} leads to the desired result in $\bH_{p,q,\omega,0}^{\alpha,\sigma}(S,T)$.

Lastly, we consider $\bm{\fH}_{q,p,\omega,0}^{\alpha,\sigma}(S,T)$. When $p=q$, we have $\bH_{p,p,\omega,0}^{\alpha,\sigma}(S,T) = \bm{\fH}_{p,p,\omega,0}^{\alpha,\sigma}(S,T)$. Thus, by applying the extrapolation theorem to \eqref{eq7291548} in the case $p=q$ (see e.g. \cite[Theorem 2.5]{DK18}), we obtain that
\begin{equation} \label{eq8102022}
    \|u^\varepsilon\|_{\bm{\fH}_{q,p,\omega}^{\alpha,\sigma}(S,T)} \leq N \|u^\varepsilon\|_{\bm{\fH}_{q,p,\omega}^{\alpha,2}(S,T)},
\end{equation}
where $\bm{\fH}_{q,p,\omega}^{\alpha,2}(S,T)$ is defined in the natural way. As above, one can also show that $u^\varepsilon\eta_n\to u$ in $\bm{\fH}_{q,p,\omega}^{\alpha,\sigma}(S,T)$ by first letting $n\to\infty$ and then $\varepsilon\to0$, which proves the density result in $\bm{\fH}_{q,p,\omega,0}^{\alpha,\sigma}(S,T)$. The proof is completed.
\end{proof}

Let
\begin{equation*}
    H_{q}^{\sigma}(\bR^d,\omega_2 dx) := \{ u\in L_{q}(\bR^d,\omega_2 dx): (-\Delta)^{\sigma/2}u \in L_{q}(\bR^d,\omega_2 dx) \}.
\end{equation*}
Suppose that $L$ satisfies Assumption \ref{as_upper}. Then for each fixed $t$, the continuity of $L=L(t)$ from $H_{q}^{\sigma}(\bR^d,\omega_2 dx)$ to $L_{q}(\bR^d,\omega_2 dx)$ was established in \cite[Proposition 4.1]{DJK23}. Using this result, we show the continuity of $L$ from both $\bH_{p,q,\omega}^{\alpha,\sigma}(S,T)$ and $\bm{\fH}_{q,p,\omega}^{\alpha,\sigma}(S,T)$ into the corresponding weighted mixed-norm spaces.

\begin{proposition} \label{lem_conti}
    Let $\alpha\in(0,1)$, $\sigma\in(0,2)$, $-\infty<S<T\leq \infty$, $p,q\in(1,\infty)$, $K_0>0$, and $[\omega]_{p,q}\leq K_0$.
Suppose that $L$ satisfies Assumption \ref{as_upper}.

    (i) Then $L$ is continuous from $\bH_{p,q,\omega}^{\alpha,\sigma}(S,T)$ to $\bL_{p,q,\omega}(S,T)$. Moreover,
\begin{equation} \label{eq7131450}
    \|Lu\|_{\bL_{p,q,\omega}(S,T)} \leq N \|(-\Delta)^{\sigma/2}u\|_{\bL_{p,q,\omega}(S,T)}.
\end{equation}
    Here, the constant $N$ can be chosen so that $N=N(d,p,q,K_0,\Lambda,\tilde{\sigma})$, where
    \begin{equation} \label{sigma}
        \tilde{\sigma} = 
\begin{cases}
(\sigma_0, \sigma_1) & \text{when } 0 < \sigma_0 \leq \sigma \leq \sigma_1 < 1, \\
1 & \text{when } \sigma = 1, \\
\sigma_0 & \text{when } 1 < \sigma_0 \leq \sigma < 2.
\end{cases}
    \end{equation}
    In particular, when $\sigma=1$, one can also consider $\nabla u$ instead of $Lu$.

    (ii) The same assertions hold with $\bm{\fH}_{q,p,\omega}^{\alpha,\sigma}(S,T)$ and $\bm{\fL}_{q,p,\omega}(S,T)$ in place of $\bH_{p,q,\omega}^{\alpha,\sigma}(S,T)$ and $\bL_{p,q,\omega}(S,T)$.
\end{proposition}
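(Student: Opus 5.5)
The plan is to reduce to the fixed-time estimate of \cite[Proposition 4.1]{DJK23}, then handle the two mixed-norm orderings differently: part (i) by integrating in time, part (ii) by extrapolation as in the proof of Lemma \ref{lem_dense}.

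\emph{Step 1 (pointwise in time).} Fix $t$. Under Assumption \ref{as_upper}, \cite[Proposition 4.1]{DJK23} gives
\begin{equation*}
\|L(t)v\|_{L_{q,\omega_2}(\bR^d)}\le N\|(-\Delta)^{\sigma/2}v\|_{L_{q,\omega_2}(\bR^d)}, \qquad v\in H_q^\sigma(\bR^d,\omega_2dx).
\end{equation*}
The constant depends only on $d,q,K_0,\Lambda,\tilde\sigma$, and hence not on $t$. Robustness in $\sigma$ comes from the factor $(2-\sigma)$ in \eqref{upper}, which offsets the degeneration of the constants near $\sigma=2$. For $\sigma=1$ the same result covers $\nabla v$, since the Riesz transforms are bounded on $L_{q,\omega_2}$.

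\emph{Step 2 (part (i)).} By Lemma \ref{lem_dense}, or more simply by the mollification in Remark \ref{remCc}, it suffices to take $u$ smooth, with $u(t,\cdot)$ and its derivatives in $L_{q,\omega_2}$ for a.e.\ $t$. For such $u$, $Lu(t,\cdot)=L(t)u(t,\cdot)$ pointwise. Apply Step 1 for each $t$, raise to the power $p$, multiply by $\omega_1(t)$ and integrate over $(S,T)$; this gives \eqref{eq7131450}. For general $u$, take approximants $u_n\to u$. The bound shows $Lu_n$ is Cauchy in $\bL_{p,q,\omega}$. Its limit coincides with the weak $Lu$, because $\int u_nL^*\varphi\to\int uL^*\varphi$: indeed $L^*\varphi$ is bounded and decays like $|x|^{-d-\sigma}$, so it lies in the dual space, using the standard fact that $\omega_2^{-1/(q-1)}$ integrated against $(1+|x|)^{-(d+\sigma)q'}$ is finite. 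This yields continuity.

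\emph{Step 3 (part (ii)).} This ordering cannot be handled pointwise in $x$, so I would use extrapolation.
\begin{itemize}
\item When $p=q$, the spaces $\bm{\fL}_{p,p,\omega}$ and $\bL_{p,p,\omega}$ coincide, so (i) gives the estimate for every product weight with $[\omega]_{p,p}\le K_0$.
\item Apply the mixed-norm extrapolation theorem \cite[Theorem 2.5]{DK18}, exactly as in deriving \eqref{eq8102022}, to the pairs $(|Lu|,|(-\Delta)^{\sigma/2}u|)$ for smooth $u$. This extends the estimate to $\bm{\fL}_{q,p,\omega}$ for all $p,q\in(1,\infty)$ and $[\omega]_{p,q}\le K_0$, with constant depending on the same parameters.
\item Continuity then follows by the density argument of Step 2.
\end{itemize}

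The main obstacle is applying extrapolation correctly in Step 3. One must check that the $p=q$ estimate holds uniformly over all product weights $\omega_1(t)\omega_2(x)$ with bounded $A_p$ constants, and that \cite[Theorem 2.5]{DK18} allows the inner and outer variables to be exchanged. A second point needing care is identifying the strong and weak $Lu$ in the limit; the approximation of Lemma \ref{lem_dense} handles this.
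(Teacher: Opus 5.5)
Your proposal is correct and follows the paper's proof: apply \cite[Proposition 4.1]{DJK23} at each fixed $t$ (with \cite[Lemma 3.5]{DJK23}, equivalently the weighted Riesz transform bound, for $\nabla u$ when $\sigma=1$), integrate in $t$ against $\omega_1$ for (i), and use the case $p=q$ together with the extrapolation theorem \cite[Theorem 2.5]{DK18} for (ii). Your extra density step, which identifies the limit of $Lu_n$ with the weak $Lu$, is care the paper leaves implicit; run it through the spatial mollification of Remark \ref{remCc} rather than Lemma \ref{lem_dense}, since that lemma is stated only for the spaces with zero initial condition.
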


\begin{proof}
    $(i)$ 
    By \cite[Proposition 4.1]{DJK23}, for each $t\in(S,T)$,
    \begin{equation*}
    \|Lu(t,\cdot)\|_{L_{q,\omega_2}(\bR^d)} \leq N(d,q,K_0,\Lambda,\tilde{\sigma}) \|(-\Delta)^{\sigma/2}u(t,\cdot)\|_{L_{q,\omega_2}(\bR^d)}.
\end{equation*}
In the case when $\sigma=1$ and $L=\nabla$, use \cite[Lemma 3.5]{DJK23}.
We raise both sides of this inequality to the power of $p$, multiply by $\omega_1(t)$, and integrate with respect to $t$. Then we obtain \eqref{eq7131450}.

$(ii)$ 
Since $\bm{\fL}_{p,p,\omega}(S,T)=\bL_{p,p,\omega}(S,T)$, the case $p=q$ follows from \eqref{eq7131450}. For the general case $p\neq q$, one just needs to use the extrapolation theorem. The proposition is proved.
\end{proof}

Our first main result is formulated in the standard Sobolev space $\bH_{p,q,\omega,0}^{\alpha,\sigma}(T)$.

\begin{theorem} \label{thm_main}
Let $\alpha\in(0,1)$, $\sigma\in(0,2)$, $T\in(0,\infty]$, $\lambda\geq0$, $p,q\in(1,\infty)$, $K_0>0$, and $[\omega]_{p,q}\leq K_0$. 

(i) Suppose that $L$ and $\cL$ satisfy Assumptions \ref{as_nonde} and \ref{as_upper}, respectively. Then for any $u\in \bH_{p,q,\omega,0}^{\alpha,\sigma}(T)$ satisfying
\begin{equation} \label{eq_main}
    \partial_t^\alpha u = Lu -\lambda u +f \text{ in } \bR^{d}_T,
\end{equation}
we have
\begin{equation} \label{est_main}
    \|\partial_t^{\alpha}u\|_{\bL_{p,q,\omega}(T)} + \|\cL u\|_{\bL_{p,q,\omega}(T)} + \lambda\|u\|_{\bL_{p,q,\omega}(T)} \leq N \|f\|_{\bL_{p,q,\omega}(T)}.
\end{equation}
Here, the constant $N$ can be chosen so that $N=N(d,p,q,K_0,\Lambda,\nu,\tilde{\alpha},\tilde{\sigma})$, where $0<\tilde{\alpha}\leq \alpha<1$ and $\tilde{\sigma}$ is given by \eqref{sigma}.

(ii) Suppose, in addition, that either $T<\infty$ or $\lambda>0$. Then for any $f\in \bL_{p,q,\omega}(T)$, there exists a unique solution $u\in \bH_{p,q,\omega,0}^{\alpha,\sigma}(T)$ to \eqref{eq_main}.
\end{theorem}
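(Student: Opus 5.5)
We follow the direction-by-direction extension strategy described in the introduction. The starting point is the unweighted case $p=q$, $\omega\equiv1$, where the estimate \eqref{est_main} (with $\cL u$ replaced by $(-\Delta)^{\sigma/2}u$) and the unique solvability for kernels merely measurable in time are available from \cite{DL23}. By Proposition \ref{lem_conti}, it suffices throughout to bound $\|(-\Delta)^{\sigma/2}u\|$: the term $\|\cL u\|$ is controlled by it, and the $\lambda\|u\|$ term is then read off from the equation. By Lemma \ref{lem_dense}, it also suffices to prove the a priori estimate for $u\in C_c^\infty([0,T]\times\bR^d)$ with $u(0,\cdot)=0$.

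\emph{Step 1: extension in space, in the odd space.} Fix a temporal exponent $p$ and a weight $\omega_1\in A_p(\bR)$, and view $x\mapsto u(\cdot,x)$ as a function with values in $Y=L_p((0,T),\omega_1dt)$. We use the spatial analogue of the time-extension principle of Section \ref{sec_time}. This consists of three parts: level set estimates via the crawling of ink spots lemma in $x$; a mean oscillation estimate for $(-\Delta)^{\sigma/2}u$ on spatial balls, in which the spatial nonlocal tail is handled by iteration; and the weighted Fefferman--Stein and Hardy--Littlewood theorems in $x$. Applied with the base case $p=q$, $\omega_1\in A_p$ (itself obtained from Step 2 with $q=p$, $\omega_2\equiv1$), this yields \eqref{eq_intro_est_odd} and solvability in $\bm{\fH}_{q,p,\omega,0}^{\alpha,\sigma}(T)$ for all $q\in(1,\infty)$ and $\omega_2\in A_q(\bR^d)$. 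This is the content of Section \ref{sec_space}.

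\emph{Step 2: extension in time, in the standard space.} Now fix $q$ and $\omega_2\in A_q(\bR^d)$, and set $X=L_q(\bR^d,\omega_2dx)$. Taking $p=q$ in Step 1, we have $\bm{\fL}_{q,q,\omega}=\bL_{q,q,\omega}$, so the estimate and solvability hold in $\bH_{q,q,\omega,0}^{\alpha,\sigma}(T)$ for every $\omega_1\in A_q(\bR)$, in particular for $\omega_1\equiv1$. We then apply the time-direction extension principle of Section \ref{sec_time} with $p_0=q$. This principle states that if the estimate and solvability hold in $L_{p_0}((0,T);X)$, then they hold in $L_p((0,T),\omega_1dt;X)$ for all $p\in(1,\infty)$ and $\omega_1\in A_p(\bR)$, with constants depending only on $K_0$ and the base data. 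This gives part (i), and the dependence of $N$ follows since every step is robust in $\alpha$ and $\sigma$ through $\tilde\alpha$ and $\tilde\sigma$.

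\emph{Step 3: solvability.} For part (ii), uniqueness follows from the a priori estimate. When $\lambda>0$, existence follows from the method of continuity between $\partial_t^\alpha-L+\lambda$ and $\partial_t^\alpha+(-\Delta)^{\sigma/2}+\lambda$; alternatively it is inherited from the extension principles, which carry solvability along. When $T<\infty$ and $\lambda=0$, we use the substitution $v=e^{-\mu t}u$, or equivalently the bound $\|u\|\le N T^\alpha\|\partial_t^\alpha u\|$ for functions with zero initial value, to control the lower-order term.

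\emph{Main obstacle.} The main difficulty lies in the time-direction mean oscillation estimate of Step 2. There, localizing in time produces a memory tail $\int_{-\infty}^{t_0-r}(t-s)^{-\alpha-1}|u(s)|\,ds$ with values in $X$, while the spatial tail is absent only because the full space norm $X$ is kept. To handle it, we would first solve the localized problem with the cutoff data in $L_{p_0}(X)$ and estimate the homogeneous part by a Hölder-in-time estimate for $X$-valued solutions. The history tail would then be summed dyadically in time, in the manner of \cite{DK19}, and the resulting series must converge with constants uniform as $\alpha\to1$.
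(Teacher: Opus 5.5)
Your proposal is correct and is essentially the paper's argument. The unweighted $p=q$ result is pushed through the time-direction extension (with $\omega_2\equiv1$) and then through the spatial extension in the odd space (Theorem~\ref{thm_odd}); finally, the case $p=q$ of the odd theorem is fed into Theorem~\ref{thm_time} with $p_0=q$. The only slip is in your Step 3: the substitution $v=e^{-\mu t}u$ does not turn the Caputo derivative into $\partial_t^\alpha v+\mu v$ (it produces a tempered fractional operator), so it is not ``equivalent'' to the other option. For $\lambda=0$, $T<\infty$ you should rely only on the bound $\|u\|\le NT^\alpha\|\partial_t^\alpha u\|$, which is what the paper uses in \eqref{eq8110024}.
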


The next result is the counterpart of Theorem \ref{thm_main} in the odd Sobolev space $\bm{\fH}_{q,p,\omega,0}^{\alpha,\sigma}(T)$.

\begin{theorem} \label{thm_odd}
    Let $\alpha\in(0,1)$, $\sigma\in(0,2)$, $T\in(0,\infty]$, $\lambda\geq0$, $p,q\in(1,\infty)$, $K_0>0$, and $[\omega]_{p,q}\leq K_0$. 

(i) Suppose that $L$ and $\cL$ satisfy Assumptions \ref{as_nonde} and \ref{as_upper}, respectively. Then for any $u\in \bm{\fH}_{q,p,\omega,0}^{\alpha,\sigma}(T)$ satisfying
\begin{equation} \label{eqmainodd}
    \partial_t^\alpha u = Lu -\lambda u +f \text{ in } \bR^{d}_T,
\end{equation}
we have
\begin{equation} \label{est_odd}
    \|\partial_t^{\alpha}u\|_{\bm{\fL}_{q,p,\omega}(T)} + \|\cL u\|_{\bm{\fL}_{q,p,\omega}(T)} + \lambda\|u\|_{\bm{\fL}_{q,p,\omega}(T)} \leq N \|f\|_{\bm{\fL}_{q,p,\omega}(T)}.
\end{equation}
Here, $N$ can be chosen so that $N=N(d,p,q,K_0,\Lambda,\nu,\tilde{\alpha},\tilde{\sigma})$, where $0<\tilde{\alpha}\leq\alpha<1$ and $\tilde{\sigma}$ is given by \eqref{sigma}.

(ii) Suppose, in addition, that either $T<\infty$ or $\lambda>0$. Then for any $f\in \bm{\fL}_{q,p,\omega}(T)$, there exists a unique solution $u\in \bm{\fH}_{q,p,\omega,0}^{\alpha,\sigma}(T)$ to \eqref{eqmainodd}. 
\end{theorem}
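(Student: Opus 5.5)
Theorem \ref{thm_odd} will be proved by a direction-by-direction extension, starting from the diagonal case $p=q$. There the standard and odd spaces coincide, and the unweighted estimate is available from the known $L_p$ theory for space-time nonlocal equations with kernels measurable in time (\cite{DL23}). The plan has three stages.

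\emph{Step 1 (temporal extension, spatial data fixed).} Fix $q=p_0$ and $\omega_2\equiv1$, and regard $u$ as a function of $t$ with values in $X=L_{q,\omega_2}(\bR^d)$. I first establish an extension principle in time: if \eqref{est_main} and solvability hold in $\bL_{p_0,q,\omega}(T)$ for one $p_0$ and every admissible $\lambda,T$, then they hold for all $p\in(1,\infty)$ and all $\omega_1\in A_p(\bR)$.

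1. For the unweighted range of $p$, use a level set argument with the crawling of ink spots lemma. Localize in time on intervals $(t_0-R^\alpha\cdot,t_0)$ and split $u=v+w$. Here $v$ solves the equation with $f$ cut off near the interval and zero initial data, and $w$ solves the homogeneous equation.
2. Estimate $w$ through the temporal nonlocal tail of $\partial_t^\alpha$. Write $\partial^\alpha_{t,S}$ in terms of $\partial^\alpha_{t}$ plus a tail term $\int_{-\infty}^{S}(t-s)^{-\alpha-1}(\cdots)ds$, and control this tail by the time maximal function of $|u|$, which has the right decay.
3. Obtain a mean oscillation estimate $(|\partial_t^\alpha u-(\partial^\alpha_t u)_I|)_I\lesssim \kappa^{-\gamma}(\bM_t|\partial_t^\alpha u|^{p_0})^{1/p_0}+\kappa^{\beta}(\bM_t\|f\|_X^{p_0})^{1/p_0}$ by iteration.
4. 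Apply the weighted Fefferman--Stein and maximal function theorems in $t$ only. Because $X$ is a Banach lattice norm applied pointwise in $t$, this works verbatim.

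Solvability then follows by the method of continuity from $L=-(-\Delta)^{\sigma/2}$, for which solvability is known.

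\emph{Step 2 (spatial extension in the odd space).} Now fix $p$ and $\omega_1\in A_p(\bR)$, obtained from Step 1 with $q=p$. Since $\bL_{p,p,\omega}=\bm{\fL}_{p,p,\omega}$, the estimate holds in $\bm{\fL}_{p,p,\omega}$. Regard $x\mapsto u(\cdot,x)$ as taking values in $Y=L_{p,\omega_1}(T)$ and repeat the argument in $x$:

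1. Localize in balls $B_R(x_0)$, using the full time interval.
2. Split $u=v+w$ as before.
3. Control the spatial tail $\int_{|y|>R}|u(x+y)|\,|y|^{-d-\sigma}dy$ by $\bM_x\|u\|_Y$.
4. Obtain interior regularity for $w$: a Hölder or mean-oscillation bound on $\cL w$ in $x$ with values in $Y$.
5. Conclude with the level set argument, the crawling of ink spots lemma, and the weighted sharp function theorem in $x$. This extends the estimate to all $q$ and all $\omega_2\in A_q$.

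The estimate for $\cL u$ then follows from Proposition \ref{lem_conti}(ii), and the $\lambda$ term comes from the equation. Robustness as $\alpha\to1$ and $\sigma\to2$ comes from tracking the factors $(2-\sigma)$ and $\Gamma(1-\alpha)$, which keep the tail constants bounded.

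\emph{Step 3 (solvability).} Use the a priori estimate, the density result Lemma \ref{lem_dense}, and the method of continuity connecting $L$ to $-(-\Delta)^{\sigma/2}$. Existence for the fractional Laplacian in $\bm{\fH}$ is obtained by approximating $f$ by $C_c^\infty$ functions (Remark \ref{remCc}), solving in an intersection of spaces, and passing to the limit. For $T<\infty$ and $\lambda=0$, a weighted-in-time trick or a reduction to $\lambda>0$ via $T$-dependent constants handles the lack of coercivity.

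\emph{Main obstacle.} I expect Step 2(4), the spatial regularity of the homogeneous solution, to be hardest. The kernel is merely measurable in $t$, so there is no smoothing in time. Worse, the $Y$-valued approach requires estimating $\|D_x w\|_Y$ locally, with the full temporal memory retained. I would first try to obtain this by differentiating in $x$, which commutes with both operators, and using the diagonal $L_p$ estimate on $\partial_x^k(\zeta w)$ with spatial cutoffs $\zeta$. The commutator terms are spatial tails, which are absorbed by iteration.
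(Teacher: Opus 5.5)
Your overall architecture matches the paper's. You extend in time from the diagonal unweighted result with $X=L_{q,\omega_2}(\bR^d)$ fixed, then extend in $x$ in the odd space with $Y=L_{p,\omega_1}(T)$, using level sets for exponents and sharp functions for weights. However, Step 1 fails exactly where the paper's main difficulty lies: kernels merely measurable in $t$.

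\textbf{The oscillation estimate in time.} You propose a mean oscillation bound for $\partial_t^\alpha u$. For the homogeneous part (your $w$, the paper's $v$), $\partial_t^\alpha v=L(t)v-\lambda v$ inherits the roughness of $t\mapsto K(t,\cdot)$. Take $L(t)=-a(t)(-\Delta)^{\sigma/2}$ with $a$ oscillating rapidly between $1$ and $2$. Then $\|\partial_t^\alpha v(t)\|_X$ oscillates on every interval by an amount comparable to its average, so no small factor $\kappa^{-\gamma}$ can appear. Controlling the temporal tail by $\bM_t\|u\|_X$ also does not help, since $u$ itself is not bounded by $f$ when $\lambda=0$ and $T=\infty$.

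The missing idea is to work with a time-independent $\cL$ (e.g.\ $-(-\Delta)^{\sigma/2}$). It commutes with $\partial_t^\alpha$ and with every $L(t)$, so $\cL v$ solves the same equation. The paper then proceeds as follows:
\begin{enumerate}
\item Multiply $\cL v$ by a time cutoff $\eta$.
\item Bound the commutator $\partial_t^\alpha(\eta\cL v)-\eta\,\partial_t^\alpha\cL v$ by $\sum_k 2^{-k\alpha}$ times dyadic averages of $\|\cL v\|_X$.
\item Apply the $p_0$-estimate.
\item Gain integrability and H\"older continuity in $t$ through $I^\alpha$ via \eqref{eq7151540} and \eqref{eq7181421}, iterating in steps of $\tilde\alpha/2$ in $1/p$.
\end{enumerate}
The quantities whose level sets and oscillations are controlled are $\|\cL u(t)\|_X$ and $\|\lambda u(t)\|_X$. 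Then $\partial_t^\alpha u$ is read off from the equation, and a general $\cL$ is handled by Proposition \ref{lem_conti}.

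\textbf{Lowering exponents.} Your plan never lowers an exponent. The level-set argument only passes from $p_0$ to $p\in(p_0,p_1)$. The weighted sharp-function step, however, needs the estimate at an exponent $p_0$ (resp.\ $q_0$) close to $1$, so that reverse H\"older gives $\omega_1\in A_{p/p_0}$ (resp.\ $\omega_2\in A_{q/q_0}$). Starting from the diagonal, you never reach $p<q$ in Step 1 or $q<p$ in the odd space, and hence not general weights either. The paper closes this with the duality Lemma \ref{lem_dual}, which solves the time-reversed adjoint problem with kernel $K(-t,-y)$ and uses density. For existence it also uses $\bL_{p,q}\subset\bm{\fL}_{q,p}$ for $q\ge p$, and truncated weights $\min\{\omega_2,n\}$ with monotone convergence.

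\textbf{Spatial regularity.} Your route of differentiating in $x$ and localizing $\partial_x^k w$ fails on the tails. There is no global bound on $\partial_x^k w$. Since $K$ is only measurable in $y$, the far-field part of $L$ gives no smoothing, and derivatives cannot be moved onto the kernel.

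The paper never differentiates. It applies the localized estimate \eqref{eq7031124} to $\cL v$, whose tail is controlled in $L_{1,\varpi}(\bR^d;Y)$. That estimate is proved with the commutator bounds of Lemma \ref{lem7191752} and an $\varepsilon^k$-weighted summation that absorbs $D_xu$ when $\sigma\ge1$. The paper then bootstraps only integrability, via \eqref{eq7201656}, and the Morrey-type bound \eqref{eq7201657} gives $\cL v\in C^\vartheta$ in $x$ with values in $Y$.
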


\begin{remark}
    $(i)$ 
    By the extrapolation theorem (see \cite[Theorem 2.5]{DK18}), each of \eqref{est_main} and \eqref{est_odd} follows from the other. For instance, once \eqref{est_main} is established, the estimate \eqref{est_odd} in the odd space follows readily from the extrapolation theorem. 
    
    $(ii)$ In both Theorems \ref{thm_main} and \ref{thm_odd}, the estimates \eqref{est_main} and \eqref{est_odd} are robust in the sense that the constant $N$ does not blow up as $\alpha\to1$ and $\sigma\to2$.
\end{remark}

\section{Extensions in the time direction} \label{sec_time}

In this section, we show that, once the spatial exponent and weight are fixed, the result can be extended in the time direction to arbitrary temporal exponents and weights.
More precisely, the aim of this section is to prove the following.

\begin{theorem} \label{thm_time}
    Let the parameters in Theorem \ref{thm_main}, except for $p$ and the weight $\omega_1$, be fixed. Suppose that, for some $p_0\in(1,\infty)$, the assertions of Theorem \ref{thm_main} hold with $p=p_0$ and with the spatial weight $\omega_2(x)$ in place of the product weight $\omega_1(t)\omega_2(x)$. Then the assertions of Theorem \ref{thm_main} hold for every $p\in(1,\infty)$ and every product weight $\omega(t,x)=\omega_1(t)\omega_2(x)$, where $\omega_1$ is an arbitrary weight in $A_p(\bR)$ satisfying $[\omega_1]_{A_p(\bR)}\leq K_0$.
\end{theorem}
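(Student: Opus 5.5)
Fix $q$ and $\omega_2\in A_q(\bR^d)$, and set $X:=L_{q,\omega_2}(\bR^d)$. We regard $u$ as an $X$-valued function of $t$, so that $\bL_{p,q,\omega}(T)=L_p((0,T),\omega_1dt;X)$. The plan has three stages. First, extend the unweighted (in time) estimate from $p_0$ to all $p\in(1,\infty)$. Second, derive a mean oscillation estimate in $t$ for $\partial_t^\alpha u$ and $\cL u$. Third, apply the weighted Fefferman--Stein and Hardy--Littlewood theorems in the time variable only. Solvability then follows from the a priori estimate by the method of continuity: connect $L$ to $-(-\Delta)^{\sigma/2}$ through $L_\theta=(1-\theta)L-\theta(-\Delta)^{\sigma/2}$, which satisfies Assumption \ref{as_nonde} uniformly in $\theta$. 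For $T<\infty$ we reduce to $\lambda>0$ by the substitution $u\mapsto e^{-\lambda t}$-type arguments or by a standard fractional Gronwall bound on $\|u\|$. Throughout we work with $u\in C_c^\infty([0,T]\times\bR^d)$, $u(0,\cdot)=0$, which is justified by Lemma \ref{lem_dense}. We also extend $u$ and $f$ by zero for $t<0$, so that $\partial_t^\alpha$ is a convolution-type operator on $\bR$, which gives translation invariance in $t$ for intervals $(t_0-R,t_0)$.

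\textbf{Step 1 (unweighted $p$).} Fix an interval $Q_R=(t_0-R^{\alpha'},t_0)$; the temporal scaling is irrelevant here because we only localize in $t$ and the spatial $X$-norm is kept global. Split $u=w+v$, where $w$ solves the equation with $f1_{(t_0-2R,t_0)}$ and zero data at $t_0-2R$ (this uses solvability at $p_0$). Then $v$ solves the homogeneous equation on $(t_0-2R,t_0)$, with a memory term from the past. For $p>p_0$ we need an $L_\infty$-in-time (or higher-$L_{p_1}$) bound for $\|\partial_t^\alpha v(t)\|_X$ and $\|\cL v(t)\|_X$ on $Q_R$. We will get it by differentiating in $t$ (using that the equation for $v$ is linear), applying the $p_0$ estimate to cutoffs $\eta(t)v$, and iterating over a finite chain of shrinking intervals until the time integrability exceeds $p$; the commutator $[\partial_t^\alpha,\eta]$ is controlled by the nonlocal-in-time tail $\sum_k 2^{-k\alpha}(\text{average over }(t_0-2^kR,t_0))$. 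The time dependence of $K$ causes no trouble because the $p_0$-estimate is uniform over all admissible kernels and is applied in $X$ with $t$ fixed pointwise. A level-set argument with the crawling of ink spots lemma (one-dimensional, on $\bR$) then gives the estimate for $p\in(p_0,\infty)$, and duality handles $p\in(1,p_0)$. For duality, the adjoint problem is backward in time with kernel $K(t,-y)$, which again satisfies the assumptions, and $X^*=L_{q',\omega_2^{1-q'}}$ has the same structure. If duality is awkward, I would instead run the same argument starting from a smaller exponent via a weak-type bound.

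\textbf{Step 2 (mean oscillation).} For each $p\in(1,\infty)$ and interval $Q_R\ni t$, using the splitting of Step 1 with $f1_{(t_0-\kappa R,t_0)}$, we aim for
\begin{equation*}
\aint_{Q_R}\Bigl\|\cL u-\bigl(\cL u\bigr)_{Q_R}\Bigr\|_X\,ds\le N\kappa^{-\gamma}\sum_{k\ge0}2^{-k\alpha}\Bigl(\aint_{t_0-2^k\kappa R}^{t_0}\|\cL u\|_X^{p}\Bigr)^{1/p}+N\kappa^{1/p}\bigl(\bM_t\|f\|_X^{p}\bigr)^{1/p},
\end{equation*}
and the same bound for $\partial_t^\alpha u$. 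The oscillation of the $v$-part is bounded by $R\,\sup_{Q_R}\|\partial_t v\|_X$, which we control through the iteration of Step 1. This iteration, needed to handle the infinite memory tail while still gaining a Hölder-type factor $\kappa^{-\gamma}$ in $t$, is the main obstacle. I would first try to bound $\partial_t v$ on $Q_R$ via the $p_1$-estimate applied to $\partial_t(\eta v)$, with the tail absorbed by the weights $2^{-k\alpha}$, keeping all constants uniform as $\alpha\to1$.

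\textbf{Step 3 (weights).} Bound the tail sum by $\bM_t(\|\cL u\|_X^{p_1})^{1/p_1}$ with $p_1<p$ chosen so that $\omega_1\in A_{p/p_1}$, using reverse Hölder for $A_p$ weights. Then apply the weighted sharp function theorem and the maximal function theorem in $t$ (\cite{DK18}) to $\|\cL u(t)\|_X$. Choosing $\kappa$ large absorbs the $\cL u$ term, giving the estimate for $\cL u$; this needs an a priori finiteness that holds for compactly supported $u$. The estimates for $\partial_t^\alpha u$ and $\lambda u$ then follow from the equation together with Proposition \ref{lem_conti}.
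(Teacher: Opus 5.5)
There is a genuine gap at the core of Steps 1 and 2. You gain regularity of $v$ by differentiating in $t$, and that is exactly what a kernel merely measurable in $t$ rules out.

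\begin{itemize}
\item No equation is available for $\partial_t v$, since it would involve $\partial_t K$.
\item Your remark that the $p_0$-estimate is ``applied in $X$ with $t$ fixed pointwise'' does not make sense: it is a space-time estimate.
\item The quantity $\sup_{Q_R}\|\partial_t v\|_X$, on which Step 2 rests, is in general infinite. Take $L(t)=-a(t)(-\Delta)^{\sigma/2}$ with $a$ a step function jumping at some $t^*\in Q_R$. Then $\partial_t^\alpha v=L(t)v-\lambda v$ jumps at $t^*$, and $v$ (and $\cL v$) picks up a $(t-t^*)_+^{\alpha}$ singularity.
\item Applying the $p_0$-estimate to $\eta v$ alone gives only a Caccioppoli-type bound. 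It yields no gain in the time integrability of $\cL v$.
\end{itemize}

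The missing idea is to differentiate in $x$ instead of $t$. By \eqref{eq7131450} you may take $\cL=-(-\Delta)^{\sigma/2}$. It is independent of $t$ and commutes with every $L(t)$, since both are translation invariant in $x$. Hence $\cL v$ solves the same homogeneous equation near $t_0$. Then $\eta\cL v$ solves it with forcing $[\partial_t^\alpha,\eta]\cL v$. This forcing is bounded by $\|\cL v\|_{L_{p_0}((t_0-R,t_0);X)}$ plus the tail $\sum_k 2^{-k\alpha}(\cdots)$. The $p_0$-estimate puts $\partial_t^\alpha(\eta\cL v)$ in $L_{p_0}(X)$. The $X$-valued embedding \eqref{eq7151540} with $1/p_1=1/p_0-\tilde\alpha/2$ then gives higher integrability, uniformly as $\alpha\to1$.

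For the oscillation, iterate this finitely many times to reach large $p$, then use \eqref{eq7181421}. This gives a $C^\vartheta$ bound in $t$ for $\cL v$, with $\vartheta<\tilde\alpha$. A H\"older modulus, not a Lipschitz one, is all that is available. It is enough to produce the small factor in front of the $\cL u$ term.

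Several further steps also need repair.

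\begin{itemize}
\item A mean oscillation bound for $\partial_t^\alpha u$ cannot hold, because $\partial_t^\alpha v=L(t)v-\lambda v$ inherits the arbitrary time oscillation of $K$.
\item The equation only controls the sum $\partial_t^\alpha u+\lambda u$, so $\lambda u$ cannot be read off from it. Run the level-set and sharp-function arguments for $\lambda u$ too; this works because $\lambda v$ satisfies the same equation as $v$. Recover $\partial_t^\alpha u$ from the equation at the end.
\item For solvability, the method of continuity needs a base case in the weighted space, and you never supply one. Also, an $e^{-\lambda t}$ substitution does not interact well with the Caputo derivative. A workable route has two steps.
  \begin{itemize}
  \item First get solvability at unweighted exponents $p$: take the $p_0$-solution for $T<\infty$, mollify in $t$, and apply the $p$-estimate.
  \item Then, for $f\in C_c^\infty$, the unweighted solution lies in the space weighted by $\min\{\omega_1,n\}\omega_2$. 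Its $A_p$ constant is bounded uniformly in $n$ by \eqref{eq7312315}. The a priori estimate and monotone convergence as $n\to\infty$ finish the argument, with \eqref{eq8110024} handling $\lambda=0$, $T<\infty$.
  \end{itemize}
\item Duality at fixed $(q,\omega_2)$ turns $p>p_0$ into $p'<p_0'$ for the pair $(q',\omega_2^{1-q'})$. To reach $p<p_0$, you must extend upward once more in the dual setting and then dualize back, as in Lemma \ref{lem_dual}.
\end{itemize}
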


\begin{remark}
    We refer the reader to \cite[Theorem 17.2.39]{HNVW23}, where extrapolation in the time variable is established for equations of the form
    \begin{equation*}
        \partial_t u = Au + f,
    \end{equation*}
    where $A$ is a certain linear operator. Our theorem can be viewed as a time-fractional analogue of that result. Our purely analytic approach, however, allows us to treat equations that are nonlocal in both time and space, with the kernel of the spatial operator merely measurable in time.
\end{remark}

\begin{remark} \label{rem_pqtime}
    In \cite[Theorem 2.6]{DL23}, Theorem \ref{thm_main} is proved when $p=q$ and $\omega=1$. Together with Theorem \ref{thm_time}, this yields \eqref{est_main} and the solvability result for general $p\neq q$ with a temporal weight $\omega(t,x)=\omega_1(t)$.
\end{remark}

We divide the proof into two parts, with the spatial exponent $q$ and  the spatial weight $\omega_2$ fixed throughout. We first extend the result to all temporal exponents $p\in(1,\infty)$ under $\omega(t,x)=\omega_2(x)$ in Section \ref{sec_time_level}, and then to product weights $\omega(t,x)=\omega_1(t)\omega_2(x)$ in Section \ref{sec_time_gen}.

\subsection{Extension of the temporal exponent} \label{sec_time_level} 
The aim of this subsection is to prove the following statement.

\begin{proposition} \label{prop7021543}
Let the parameters in Theorem \ref{thm_main}, except for $p$ and the weight $\omega$, be fixed, and suppose that $L$ and $\cL$ satisfy the assumptions of Theorem \ref{thm_main}. Suppose that, for some $p_0\in(1,\infty)$, the assertions of Theorem \ref{thm_main} hold with $p=p_0$ and $\omega(t,x)=\omega_2(x)$. Then the same assertions hold for every $p\in(1,\infty)$ with $\omega(t,x)=\omega_2(x)$.
\end{proposition}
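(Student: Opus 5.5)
We regard $u$ as a function of $t$ with values in the Banach space $X:=L_{q,\omega_2}(\bR^d)$. We write the equation as $\partial_t^\alpha u=Lu-\lambda u+f$ and treat the temporal exponent by a real-variable argument in $t$ only; the spatial structure enters only through the assumed $p_0$-estimate, which is used as a black box. We split into the cases $p>p_0$ and $p<p_0$.

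\textbf{Case $p>p_0$.} The plan is a level set (good-$\lambda$) argument in time.

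\emph{Step 1: splitting on an interval.} Fix a time interval $Q=(t_0-r,t_0)$ and a solution $u$ with right-hand side $f$. Take a cutoff $\zeta(t)$ equal to $1$ on a neighborhood of $Q$ and supported in $(t_0-2r,t_0)$. We write $u=v+w$, where $v\in\bH^{\alpha,\sigma}_{p_0,q,\omega_2,0}$ solves the equation with right-hand side $f\zeta$. Solvability for $p_0$ (applied with $\lambda>0$, or on a finite interval) provides $v$, together with the bound $\|\partial_t^\alpha v\|+\|\cL v\|+\lambda\|v\|\le N\|f\zeta\|$ in $L_{p_0}(X)$.

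\emph{Step 2: estimates for $w$.} The function $w$ solves the homogeneous equation near $Q$, but its Caputo derivative carries a nonlocal tail from earlier times. For $w$ we aim for a H\"older-type bound in time of the form
\[
\Bigl(\aint_Q \|\partial_t^\alpha w-c\|_X^{p_1}\,dt\Bigr)^{1/p_1}\le N \sum_{k\ge0}2^{-k\alpha}\Bigl(\aint_{t_0-2^kr}^{t_0}\|\partial_t^\alpha u\|_X^{p_0}\Bigr)^{1/p_0}+\dots
\]
for some $p_1>p_0$. The idea is to bootstrap the $p_0$-estimate on nested intervals, using that time derivatives of $w$ (or difference quotients in $t$) solve the same equation with tail terms. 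Since the kernel of $L$ is only measurable in $t$, we cannot differentiate in $t$. Instead, the plan is to gain integrability via the fractional integral: $w$ has zero data on the last part of the interval, so $I^{1-\alpha}$ plus Sobolev embedding in time gives $L_{p_1}$ with $p_1=p_0+\varepsilon$. We then iterate finitely many times to reach any $p$.

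\emph{Step 3: covering.} Combining the two pieces, we derive
\[
|\{\mathcal{M}\|\partial_t^\alpha u\|_X>K s\}\cap Q|\le \varepsilon|Q|
\]
whenever $Q$ is not contained in $\{\mathcal{M}\|f\|_X^{p_0}>\gamma s^{p_0}\}$. Here $\mathcal{M}$ is the temporal maximal function; since time is one-sided, it is taken over intervals $(t-R,t)$, and the tail is controlled by a maximal function over all past scales. The ``crawling of ink spots'' lemma then yields the $L_p(X)$ bound for $\partial_t^\alpha u$. The bound for $\cL u$ follows similarly, and the bound for $\lambda u$ by the usual trick of adding a dummy variable, or directly.

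\textbf{Case $p<p_0$.} We argue by duality. The adjoint problem is the backward equation with the operator $L^*$, whose kernel $K(t,-y)$ satisfies the same assumptions, and the relevant dual space is $L_{p'}(X^*)$ with $X^*=L_{q',\omega_2^{1-q'}}$. This requires the $p_0$-result for the reflected problem, with $t\mapsto T-t$ and $q'$. That is awkward because the hypothesis is stated only for $q$, so I would instead run the level set argument also for $p<p_0$, using a weak-type or reverse-H\"older formulation, or alternatively first obtain the mean oscillation estimate and apply the sharp function theorem to $\|\cdot\|_X$.

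\textbf{Solvability.} Solvability follows from the a priori estimate by the method of continuity, connecting $L$ to a fixed reference operator, for instance $-(-\Delta)^{\sigma/2}$. Existence for the reference operator comes from approximating $f$ by $C_c^\infty$ functions (Remark~\ref{remCc}), which lie in $L_{p_0}(X)\cap L_p(X)$, and then passing to the limit. Lemma~\ref{lem_dense} lets us use smooth approximations throughout.

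\textbf{Main obstacle.} The hardest step is Step 2: obtaining higher temporal integrability of $w$ without differentiating in $t$ and while controlling the temporal nonlocal tail. My first attempt would be the iteration above, with sums $\sum_k 2^{-k\alpha}$ of averages over dyadic past intervals, relying on the fact that the constants remain bounded as $\alpha\to1$.
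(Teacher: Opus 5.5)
Your architecture for $p>p_0$ matches the paper's: split off the data near the interval, improve the integrability of the homogeneous part, run a level-set argument with the crawling of ink spots lemma, and iterate using a uniform increment $p_1-p_0$. However, Step 2 has no working mechanism. Applying the embedding \eqref{eq7151540}, $\|h\|_{\bL_{p_1,q,\omega_2}}\le N\|\partial_t^\alpha h\|_{\bL_{p_0,q,\omega_2}}$, to $w$ only improves the integrability of $w$ itself. Improving that of $\partial_t^\alpha w$ would require an equation for $\partial_t^\alpha w$, which the mere measurability of $K$ in $t$ rules out. So taking $\partial_t^\alpha u$ as the primary quantity cannot work.

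The missing idea is to use a \emph{spatial} operator instead. By Proposition \ref{lem_conti} it suffices to treat $\cL=-(-\Delta)^{\sigma/2}$. This operator is independent of $t$ and commutes with $L(t)$ for every $t$, since both are translation invariant in $x$. Hence $\cL w$ (after spatial mollification) satisfies the same homogeneous equation near $Q$. For a temporal cutoff $\eta$, the function $\eta\cL w$ satisfies it with right-hand side the commutator $\partial_t^\alpha(\eta\cL w)-\eta\,\partial_t^\alpha\cL w$, an integral of $(t-s)^{-\alpha-1}(\eta(t)-\eta(s))\cL w(s,x)$ over the past. Its $L_{p_0}(X)$ norm is bounded by $N\sum_{k\ge0}2^{-k\alpha}\bigl(\aint_{t_0-2^kr}^{t_0}\|\cL w\|_X^{p_0}dt\bigr)^{1/p_0}$. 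The $p_0$-estimate then controls $\partial_t^\alpha(\eta\cL w)$ in $L_{p_0}(X)$. Only at this point does the embedding give $\cL w\in L_{p_1}(X)$ locally, with $1/p_1=1/p_0-\tilde\alpha/2$. The level-set argument is run for $\cL u$ and $\lambda u$, with level sets defined through $\sM_{t,p_0}[f]$ and $\sM_{t,p_0}[\cL u]$. Then $\partial_t^\alpha u$ is read off from the equation, the reverse of the order you propose.

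Your treatment of $p<p_0$ is a second gap. Both the good-$\lambda$ and the sharp-function routes control $\cL u$ through $p_0$-averages, and $\sM_{t,p_0}$ is bounded on $L_p$ only for $p>p_0$. Neither alternative reaches $p<p_0$, and a weak-type version would need ingredients you do not supply. Duality is the right tool, and your objection applies only to a single duality step. Dualizing the $(p,q,\omega_2)$ result needs the hypothesis only at $q$, applied to the time-reflected adjoint operator with kernel $K(-t,-y)$, which satisfies the same assumptions. Its output is the result at $(p',q',\omega_2^{-1/(q-1)})$; this is Lemma \ref{lem_dual}(ii). The full chain is:
\begin{enumerate}
\item[(a)] the upward extension gives all $p>p_0$ at $(q,\omega_2)$;
\item[(b)] duality gives all $p'<p_0/(p_0-1)$ at $(q',\omega_2^{-1/(q-1)})$;
\item[(c)] the upward extension, run at this fixed spatial pair, gives every $p'\in(1,\infty)$ there;
\item[(d)] a second duality returns every $p\in(1,\infty)$ at $(q,\omega_2)$.
\end{enumerate}
A smaller point concerns existence for $p>p_0$. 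You must check that the $p_0$-solution lies in $\bH_{p,q,\omega_2,0}^{\alpha,\sigma}(T)$ before applying the $p$-estimate. The paper does this by mollifying in $t$, which is legitimate after reducing to $L=-(-\Delta)^{\sigma/2}$ and uses $T<\infty$.
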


\begin{lemma} \label{lem7021436}
    Let the assumptions of Proposition \ref{prop7021543} be satisfied. Assume further that $\cL$ is independent of $t$.
    Let $u\in \bH_{p_0,q,\omega_2,0}^{\alpha,\sigma}(T)$ be a solution to \eqref{eq_main}. Then there exists $p_1=p_1(\tilde{\alpha},p_0)\in (p_0,\infty]$ such that
    \begin{equation} \label{eq6171544}
        p_1-p_0>\frac{\tilde{\alpha}}{2}>0
    \end{equation}
    and the following holds. For any $t_0\in (0,T]$ (with the convention that $(0,\infty]:=(0,\infty)$ if $T=\infty$) and $R>0$, there exist $v,w\in \bH_{p_0,q,\omega_2,0}^{\alpha,\sigma}(t_0)$ such that $u=v+w$,
    \begin{align} \label{eq7011825}
       &\left(\aint_{t_0-R}^{t_0} \|\cL w(t,\cdot)\|_{X}^{p_0} dt \right)^{1/p_0} + \left(\aint_{t_0-R}^{t_0} \|\lambda w(t,\cdot)\|_{X}^{p_0} dt \right)^{1/p_0} \nonumber
       \\
       &\leq N \left(\aint_{t_0-2R}^{t_0} \|f(t,\cdot)\|_{X}^{p_0} dt\right)^{1/p_0},
    \end{align}
    \begin{align} \label{eq7020015}
        &\left(\aint_{t_0-R/4}^{t_0} \|\cL v(t,\cdot)\|_{X}^{p_1} dt\right)^{1/p_1} \nonumber
        \\
        &\leq N \sum_{k=0}^{\infty} 2^{-k\alpha} \left( \aint_{t_{0}-2^{k}R}^{t_0} \|\cL u(t,\cdot)\|_{X}^{p_0} dt \right)^{1/p_0} + N \left(\aint_{t_0-2R}^{t_0} \|f(t,\cdot)\|_{X}^{p_0} dt\right)^{1/p_0},
    \end{align}
    and
        \begin{align*}
                &\left(\aint_{t_0-R/4}^{t_0} \|\lambda v(t,\cdot)\|_{X}^{p_1} dt\right)^{1/p_1} \nonumber
        \\
        &\leq N \sum_{k=0}^{\infty} 2^{-k\alpha} \left( \aint_{t_{0}-2^{k}R}^{t_0} \|\lambda u(t,\cdot)\|_{X}^{p_0} dt \right)^{1/p_0} + N \left(\aint_{t_0-2R}^{t_0} \|f(t,\cdot)\|_{X}^{p_0} dt\right)^{1/p_0},
    \end{align*}
    where $X:=L_{q,\omega_2}(\bR^d)$, and $N=N(d,p_0,q,K_0,\Lambda,\nu,\tilde{\alpha},\tilde{\sigma})$ with $\tilde{\alpha}\leq\alpha<1$ and $\tilde{\sigma}$ defined as in \eqref{sigma}.
    Here, $u,v,w$, and $f$ are extended to be zero for $t<0$, and
    \begin{equation*}
        \left(\aint_{t_0-R/4}^{t_0} \|\cdot\|_{X}^{p_1} dt\right)^{1/p_1} := \|\cdot\|_{L_{\infty}((t_0-R/4,t_0);X)}
    \end{equation*}
    when $p_1=\infty$.
\end{lemma}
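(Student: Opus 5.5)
Fix $t_0\in(0,T]$ and $R>0$, and take a cutoff $\zeta\in C^\infty(\bR)$ in time with $\zeta=1$ on $[t_0-R,\infty)$, $\zeta=0$ on $(-\infty,t_0-2R]$, and $|\zeta'|\le N/R$. Let $w\in\bH_{p_0,q,\omega_2,0}^{\alpha,\sigma}(t_0)$ be the solution, provided by the assumed solvability at $p=p_0$, of
\begin{equation*}
\partial_t^\alpha w=Lw-\lambda w+\zeta f\quad\text{in }(0,t_0)\times\bR^d .
\end{equation*}
If $T=\infty$ and $\lambda=0$, solvability on the finite interval $(0,t_0)$ still holds because $t_0<\infty$. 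The estimate of Theorem~\ref{thm_main} with $p=p_0$ bounds $\|\cL w\|+\lambda\|w\|$ in $L_{p_0}((0,t_0);X)$ by $\|\zeta f\|_{L_{p_0}((0,t_0);X)}\le\|f\|_{L_{p_0}((t_0-2R,t_0);X)}$. Dividing by $R^{1/p_0}$ gives \eqref{eq7011825}.

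Set $v:=u-w$. Then $v$ solves the homogeneous-in-the-window equation
\begin{equation*}
\partial_t^\alpha v=Lv-\lambda v+(1-\zeta)f,
\end{equation*}
and the forcing vanishes on $(t_0-R,t_0)$. The aim is a higher integrability estimate for $\cL v$ (and for $\lambda v$) on $(t_0-R/4,t_0)$.

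\textbf{Step 1: reduce to a local problem.} Use a second cutoff $\eta$ in time with $\eta=1$ on $(t_0-R/2,t_0)$, supported in $(t_0-R,t_0]$. Since $\cL$ is independent of $t$, it commutes with multiplication by $\eta(t)$. The function $\eta v$ satisfies
\begin{equation*}
\partial_t^\alpha(\eta v)=L(\eta v)-\lambda\eta v+g,
\end{equation*}
where the commutator is
\begin{equation*}
g=\partial_t^\alpha(\eta v)-\eta\partial_t^\alpha v,
\end{equation*}
and the term $(1-\zeta)f$ drops out because $(1-\zeta)f\,\eta=0$. The commutator $g$ has the standard kernel expression
\begin{equation*}
g(t)=\frac{\alpha}{\Gamma(1-\alpha)}\int_{-\infty}^{t}(t-s)^{-\alpha-1}\bigl(\eta(s)-\eta(t)\bigr)v(s)\,ds .
\end{equation*}
Apply $\cL$ to this identity. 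Because $\cL$ commutes with the time operations, $\cL(\eta v)$ solves the analogous equation with forcing $\cL g$; more precisely, one works with the difference quotient form. It is then bounded by the a priori estimate at exponent $p_0$.

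\textbf{Step 2: gain integrability from the kernel bound on $g$.} For $t\in(t_0-R/4,t_0)$, split the $s$-integral into $s<t_0-R$, where $\eta(s)=0$, and $s\in(t_0-R,t)$, where $|\eta(s)-\eta(t)|\le N|t-s|/R$. The near part has an integrable kernel $(t-s)^{-\alpha}/R$, so it maps $L_{p_0}$ to $L_{p_1}$ by the Hardy--Littlewood--Sobolev (fractional integration) inequality with smoothing order $1-\alpha$. This yields some $p_1>p_0$, and because the smoothing order is $\ge$ a quantity controlled in terms of $\tilde\alpha$, one gets $p_1-p_0>\tilde\alpha/2$; if $p_0(1-\alpha)$ is large, $p_1=\infty$ is allowed. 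The far part is bounded pointwise by
\begin{equation*}
\sum_k(2^kR)^{-\alpha-1}\int_{t_0-2^{k+1}R}^{t_0-2^kR}\|\cdot\|_X,
\end{equation*}
which via Hölder gives the tail $\sum_k 2^{-k\alpha}\bigl(\aint\|\cL u\|^{p_0}\bigr)^{1/p_0}$. Here $v$ is replaced by $u-w$, and the contribution of $w$ is controlled by \eqref{eq7011825}. The same computation with $\lambda$ in place of $\cL$ gives the $\lambda v$ estimate.

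The remaining issue is converting $L_{p_1}$ integrability of the forcing into $L_{p_1}$ integrability of $\cL(\eta v)$. The estimate is only known at $p_0$, so I would instead avoid inverting the equation at $p_1$: write $\eta v$ by Duhamel through the time fractional integral, $\eta v$ equal to the solution whose forcing is $g$. I would then use that $g$ on $(t_0-R/4,t_0)$ depends only on values of $v$ on the support of $\eta$, iterating a bootstrap over nested intervals $R/2^j$ from $p_0$ to $p_1$. This conversion is where I expect the main obstacle. I would first try iterating the $p_0$ estimate on dyadically shrinking intervals, using a Sobolev embedding in time for $\partial_t^\alpha$: $I^\alpha:L_{p_0}\to L_{p_1}$ with $1/p_1=1/p_0-\alpha$, which gives the gap $\ge\tilde\alpha/2$ after adjusting. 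This would be applied to $\cL(\eta v)$, whose $\partial_t^\alpha$ is controlled in $L_{p_0}$ by differentiating the equation in a suitable sense, i.e.\ applying the estimate to $\cL v$ itself, which is again a solution since $\cL$ is $t$-independent.
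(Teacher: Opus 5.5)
Your construction of $w$, the bound \eqref{eq7011825}, and the idea of localizing $\cL v$ with a time cutoff and a commutator forcing all agree with the paper. However, Step 2 puts the gain of integrability in the wrong place, and as written it fails. The near part of $g$ is, up to a factor $R^{-1}$, a truncated fractional integral of order $1-\alpha$ of $\|\cL v\|_X$. Hardy--Littlewood--Sobolev therefore improves $1/p$ by at most $1-\alpha$, which tends to $0$ as $\alpha\to1$. Since $\tilde\alpha$ is a lower bound for $\alpha$, not for $1-\alpha$, this route cannot produce $p_1=p_1(\tilde\alpha,p_0)$ with $p_1-p_0>\tilde\alpha/2$. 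Moreover, as you note yourself, an $L_{p_1}$ bound on the forcing cannot be turned into one for the solution, because the a priori estimate is only available at $p_0$. Iterating over nested intervals does not help, since each step would again need the estimate at the improved exponent.

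The argument you sketch only at the end is the right one; it is exactly the paper's, and it works in a single step with no bootstrap.

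\emph{Localize.} Rescale to $R=1$ and take $\eta=1$ on $(t_0-1/4,t_0)$ and $\eta=0$ for $t\le t_0-1/2$. Set $\tilde v=\eta\,\cL v^\varepsilon$, where $v^\varepsilon$ is a spatial mollification, so that $\tilde v\in\bH^{\alpha,\sigma}_{p_0,q,\omega_2,0}(t_0-1/2,t_0)$. Note that $\cL$ commutes with $L(t)$ because both are Fourier multipliers in $x$. The forcing $(1-\zeta)f$ then drops out.

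\emph{Bound $g$ only in $L_{p_0}$.} You need this on the whole interval $(t_0-1/2,t_0)$, not just on $(t_0-1/4,t_0)$, using $|\eta(t)-\eta(s)|\le N\min\{|t-s|,1\}$. The near part costs $N/((1-\alpha)\Gamma(1-\alpha))=N/\Gamma(2-\alpha)$, which stays bounded as $\alpha\to1$. The far part gives the dyadic tail.

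\emph{Estimate and embed.} The $p_0$ estimate, shifted to start at $t_0-1/2$, gives $\|\partial^\alpha_{t,t_0-1/2}\tilde v\|_{L_{p_0}}\le N\|g\|_{L_{p_0}}$. The time embedding \eqref{eq7151540} then puts $\tilde v$ in $L_{p_1}$; this embedding comes from $\tilde v=I^\alpha\partial_t^\alpha\tilde v$, the factorization $I^\alpha=I^{\tilde\alpha}I^{\alpha-\tilde\alpha}$, and Young's inequality. Take $1/p_1=1/p_0-\tilde\alpha/2$ if $p_0\le1/\tilde\alpha$, and $p_1=\infty$ otherwise. Use $\tilde\alpha/2$ rather than your $1/p_1=1/p_0-\alpha$: the latter makes $p_1$ depend on $\alpha$ and sits at the critical endpoint, where the embedding fails for $p_1=\infty$.

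\emph{Handle the $w$ terms in the tail.} When you replace $v$ by $u-w$, use the global bound $\|\cL w\|_{L_{p_0}((0,t_0);X)}\le N\|f\|_{L_{p_0}((t_0-2,t_0);X)}$, which produces the decaying factor $2^{-k/p_0}$. The bound \eqref{eq7011825} alone is not enough, since it only controls the window $(t_0-1,t_0)$.
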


\begin{proof}
For general $R>0$, denote
\begin{equation*}
\tilde{u}(t,x) = R^{-\alpha} u(Rt, R^{\alpha/\sigma}x) \quad \text{ and } \quad \tilde{f}(t,x) = f(Rt, R^{\alpha/\sigma}x).
\end{equation*}
Then $\tilde{u}$ satisfies
\begin{equation*}
    \partial_t^\alpha \tilde{u} - \tilde{L}\tilde{u} + \lambda R^{\alpha} \tilde{u} = \tilde{f} \quad \text{in } (0, R^{-1}T) \times \mathbb{R}^d,
\end{equation*}
where $\tilde{L}$ is the operator with the kernel $R^{\alpha(d+\sigma)/\sigma} K(Rt, R^{\alpha/\sigma}y)$ satisfying Assumption \ref{as_nonde} with the same $\nu$ and $\Lambda$ as $K$, the kernel of $L$. Moreover, by \cite[Proposition 7.1.5]{G14},
\begin{equation*}
    [\omega_2(R^{\alpha/\sigma}\cdot)]_{A_q(\bR^d)} = [\omega_2]_{A_q(\bR^d)}.
\end{equation*}
Thus, we may assume that $R=1$.

Let $\eta_0 \in C^\infty(\bR)$ be a cutoff function such that $\eta_0=1$ for $t\geq t_0-1$, and $\eta_0=0$ for $t\leq t_0-2$. Since Theorem \ref{thm_main} holds for $p=p_0$ and $\omega=\omega_2(x)$, one can find a solution $w\in \bH_{p_0,q,\omega_2,0}^{\alpha,\sigma}(t_0)$ to
    \begin{equation*}
        \partial_t^\alpha w = Lw -\lambda w + f\eta_0.
    \end{equation*}
    Here, due to $t_0<\infty$, the existence of $w$ is guaranteed regardless of whether $\lambda=0$ or not.
    Moreover, we have
    \begin{eqnarray} \label{eq7181804}
        \|\partial_t^{\alpha}w\|_{\bL_{p_0,q,\omega_2}(t_0)} + \|\cL w\|_{\bL_{p_0,q,\omega_2}(t_0)} + \lambda\|w\|_{\bL_{p_0,q,\omega_2}(t_0)} \leq N \|f\eta_0\|_{\bL_{p_0,q,\omega_2}(t_0)}.
    \end{eqnarray}
    Since $w$ and $f$ are zero for $t<0$, we get \eqref{eq7011825}.

    Next, we consider $v:=u-w\in \bH_{p_0,q,\omega_2,0}^{\alpha,\sigma}(t_0)$. Due to the similarity, we only prove \eqref{eq7020015}.
    Let $\varphi\in C_c^\infty(\bR^d)$ be a nonnegative function such that $\int_{\bR^d} \varphi \, dx =1$. For $\varepsilon>0$, define $\varphi^\varepsilon(x):=\varepsilon^{-d}\varphi(x/\varepsilon)$, and for any function $h$ on $(-\infty,T)\times \bR^d$, $h^\varepsilon=h*\varphi^\varepsilon$, where $*$ denotes the convolution in the spatial variable.
    
    Let $\eta \in C^\infty(\bR)$ be a cutoff function such that $\eta=1$ for $t\geq t_0-1/4$, and $\eta=0$ for $t\leq t_0-1/2$. As in \cite[Lemma 3.6]{DK19}, one can see that $\tilde{v}^\varepsilon:=\cL(v^\varepsilon\eta)=\eta \cL v^\varepsilon\in \bH_{p_0,q,\omega_2,0}^{\alpha,\sigma}(t_0-1/2,t_0)$ satisfies
    \begin{equation} \label{eq7161314}
        \partial_{t,t_0-1/2}^\alpha \tilde{v}^\varepsilon = L\tilde{v}^\varepsilon - \lambda \tilde{v}^\varepsilon + g^{\varepsilon} \quad \text{ in } (t_0-1/2,t_0)\times \bR^d,
    \end{equation}
    where
    \begin{align*}
        g^{\varepsilon}(t,x) &= \partial_{t,t_0-1/2}^\alpha(\eta\cL v^\varepsilon) - \eta\partial_{t,t_0-1/2}^\alpha\cL v^\varepsilon 
        \\
        &= \frac{\alpha}{\Gamma(1-\alpha)} \int_{t_0-1/2}^{t} (t-s)^{-\alpha-1} (\eta(t)-\eta(s)) \cL v^\varepsilon(s,x) ds.
    \end{align*}
 Thus, by applying the time-shifted version of \eqref{est_main} with $p=p_0$ and $\omega(t,x)=\omega_2(x)$,
    \begin{align*}
        \|\partial_{t,t_0-1/2}^{\alpha}\tilde{v}^\varepsilon\|_{\bL_{p_0,q,\omega_2}(t_0-1/2,t_0)} \leq N \|g^{\varepsilon}\|_{\bL_{p_0,q,\omega_2}(t_0-1/2,t_0)}.
    \end{align*}
    As in Remark \ref{remCc}, one can show that
    \begin{equation*}
        |h^\varepsilon(t,x)| \leq N(\bM_x h(t,\cdot))(x).
    \end{equation*}
    Thus, by the dominated convergence theorem, $h^\varepsilon\to h$ in $\bL_{p_0,q,\omega_2}(t_0-1/2,t_0)$ as $\varepsilon\to 0$, which yields
    \begin{align*}
        \|\partial_{t,t_0-1/2}^{\alpha}\tilde{v}\|_{\bL_{p_0,q,\omega_2}(t_0-1/2,t_0)} \leq N \|g\|_{\bL_{p_0,q,\omega_2}(t_0-1/2,t_0)}.
    \end{align*}

Now we estimate $g$. Since $|\eta(t)-\eta(s)|\leq N\min\{|t-s|,1\}$, for $t\in(t_0-1/2,t_0)$,
\begin{align*}
    |g(t,x)| &\leq \frac{N}{\Gamma(1-\alpha)}\int_{t-1/2}^{t} |t-s|^{-\alpha} |\cL v(s,x)| ds 
    \\
    &\quad+ \frac{N}{\Gamma(1-\alpha)} \int_{-\infty}^{t-1/2} |t-s|^{-\alpha-1} |\cL v(s,x)|ds
    \\
    &=: I_1 + I_2.
\end{align*}
By a change of variables and the Minkowski inequality,
\begin{align} \label{eq7181844}
    &\|I_1\|_{\bL_{p_0,q,\omega_2}(t_0-1/2,t_0)} \nonumber
    \\
    &\leq \frac{N}{\Gamma(1-\alpha)} \int_0^{1/2} s^{-\alpha} \|\cL v(\cdot-s)\|_{\bL_{p_0,q,\omega_2}(t_0-1/2,t_0)} ds \nonumber
    \\
    &\leq \frac{N}{\Gamma(1-\alpha)} \|\cL v\|_{\bL_{p_0,q,\omega_2}(t_0-1,t_0)} \int_0^{1/2} s^{-\alpha} ds \nonumber
    \\
    &\leq \frac{N}{(1-\alpha)\Gamma(1-\alpha)} \|\cL v\|_{\bL_{p_0,q,\omega_2}(t_0-1,t_0)} = N\|\cL v\|_{\bL_{p_0,q,\omega_2}(t_0-1,t_0)}.
\end{align}
Here, for the last equality, we used the facts that $z\Gamma(z)=\Gamma(z+1)$ and that $1/\Gamma(z)$ is bounded for $z\in[1,2]$.
For $I_2$, since $t\in (t_0-1/2,t_0)$,
\begin{align*}
    |I_2(t,x)| &\leq N\sum_{k=0}^{\infty} \int_{t-2^{k}}^{t-2^{k-1}} |t-s|^{-\alpha-1} |\cL v(s,x)|ds
    \\
    &\leq N\sum_{k=0}^{\infty} 2^{-k(\alpha+1)} \int_{t-2^{k}}^{t-2^{k-1}}|\cL v(s,x)|ds
    \\
    &\leq N\sum_{k=0}^{\infty} 2^{-k(\alpha+1)} \int_{t_0-2^{k+1}}^{t_0}|\cL v(s,x)|ds.
\end{align*}
    Thus, again by the Minkowski inequality,
    \begin{align} \label{eq7181845}
    \|I_2\|_{\bL_{p_0,q,\omega_2}(t_0-1/2,t_0)} \leq N \sum_{k=0}^{\infty} 2^{-k\alpha} \aint_{t_{0}-2^{k+1}}^{t_0} \|\cL v(t,\cdot)\|_{X} dt.
\end{align}
    Hence, by H\"older's inequality,
    \begin{align} \label{eq7161456}
        \|g\|_{\bL_{p_0,q,\omega_2}(t_0-1/2,t_0)} &\leq N\|\cL v\|_{\bL_{p_0,q,\omega_2}(t_0-1,t_0)} \nonumber
        \\
        &\quad+ N \sum_{k=0}^{\infty} 2^{-k\alpha} \aint_{t_{0}-2^{k+1}}^{t_0} \|\cL v(t,\cdot)\|_{X} dt \nonumber
\\
        &\leq N \sum_{k=0}^{\infty} 2^{-k\alpha} \left(\aint_{t_{0}-2^{k}}^{t_0} \|\cL v(t,\cdot)\|_{X}^{p_0} dt \right)^{1/p_0}.
    \end{align}

    Take $p_1\in(p_0,\infty]$ so that
    \begin{equation*}
    1/p_1=1/p_0-\tilde{\alpha}/2 \quad \text{ if } p_0\leq 1/\tilde{\alpha},
    \end{equation*}
    and $p_1=\infty$ if $p_0>1/\tilde{\alpha}$.
    In particular, if $p_0 \leq 1/\tilde{\alpha}$, then
    \begin{equation*}
        p_1-p_0 = \frac{\tilde{\alpha} p_0^2}{2-\tilde{\alpha} p_0} \geq \frac{\tilde{\alpha} p_0^2}{2} >  \frac{\tilde{\alpha}}{2},
    \end{equation*}
    which yields \eqref{eq6171544}.
    Then by \eqref{eq7151540},
\begin{equation} \label{eq7022034}
     \|\tilde{v}\|_{\bL_{p_1,q,\omega_2}(t_0-1/2,t_0)} \leq N \|\partial_{t,t_0-1/2}^\alpha\tilde{v}\|_{\bL_{p_0,q,\omega_2}(t_0-1/2,t_0)}.
\end{equation}
    This together with \eqref{eq7161456} leads to
    \begin{align} \label{eq7172038}
        \left(\aint_{t_0-1/4}^{t_0} \|\cL v(t,\cdot)\|_{X}^{p_1} dt\right)^{1/p_1} \leq N \sum_{k=0}^{\infty} 2^{-k\alpha} \left(\aint_{t_{0}-2^{k}}^{t_0} \|\cL v(t,\cdot)\|_{X}^{p_0} dt \right)^{1/p_0}.
    \end{align}
Since $v=u-w$, by \eqref{eq7181804},
\begin{align*}
    &\left(\aint_{t_{0}-2^{k}}^{t_0} \|\cL v(t,\cdot)\|_{X}^{p_0} dt \right)^{1/p_0}
    \\
    &\leq \left(\aint_{t_{0}-2^{k}}^{t_0} \|\cL u(t,\cdot)\|_{X}^{p_0} dt \right)^{1/p_0} + \left(\aint_{t_{0}-2^{k}}^{t_0} \|\cL w(t,\cdot)\|_{X}^{p_0} dt \right)^{1/p_0}
\\
&\leq \left(\aint_{t_{0}-2^{k}}^{t_0} \|\cL u(t,\cdot)\|_{X}^{p_0} dt \right)^{1/p_0} + N2^{-k/p_0}\left(\aint_{t_{0}-2}^{t_0} \|f(t,\cdot)\|_{X}^{p_0} dt \right)^{1/p_0}.
\end{align*}
This and \eqref{eq7172038} yield \eqref{eq7020015} with $R=1$.
    The lemma is proved.    
\end{proof}

We introduce some notation which will be used below. Let
\begin{equation*}
    \cC_R(t) := (t-R,t+R), \quad \widehat{\cC_R}(t) := \cC_R(t)\cap (-\infty,T).
\end{equation*}
For $s>0$, $\gamma>0$, $p_0\in(1,\infty)$, and $p_1\in(p_0,\infty]$ from Lemma \ref{lem7021436},
\begin{equation} \label{eq7172355}
    \cA(s) :=\{t\in(-\infty,T): \|\cL u(t,\cdot)\|_{X} > s\},
\end{equation}
\begin{equation*}
    \cA'(s) :=\{t\in(-\infty,T): \|\lambda u(t,\cdot)\|_{X} > s\},
\end{equation*}
\begin{align} \label{eq7172356}
    \cB_{\gamma}(s) := \{t\in (-\infty,T): \gamma^{-1/p_0} \sM_{t,p_0}[f](t) + \gamma^{-1/p_1} \sM_{t,p_0}[\cL u](t) >s\},
\end{align}
and
\begin{align*}
    \cB_{\gamma}'(s) := \{t\in (-\infty,T): \gamma^{-1/p_0} \sM_{t,p_0}[f](t) + \gamma^{-1/p_1} \sM_{t,p_0}[\lambda u](t) >s \},
\end{align*}
where $X:=L_{q,\omega_2}(\bR^d)$. 
Here, for an $X$-valued measurable function $h=h(t,\cdot)$, the expression $(\sM_{t,p}[h](t))^p$ means that the maximal operator $\bM_t$ is applied to the scalar-valued function $t\to \|h(t,\cdot)\|_X^{p}$. More precisely,
\begin{equation} \label{eq7172203}
    \sM_{t,p}[h](t) := \left(\bM_t \|h(\star,\cdot)\|_X^p(t)\right)^{1/p} = \left(\sup_{\cC_R(t')\ni t} \aint_{\cC_R(t')} \|h(s,\cdot)\|_X^{p} 1_{(-\infty,T)}(s) ds\right)^{1/p}.
\end{equation}

\begin{lemma} \label{lem7172353}
    Let $\gamma\in(0,1)$, $R>0$, and the assumptions of Proposition \ref{prop7021543} be satisfied. Assume further that $\cL$ is independent of $t$.
    Let $u\in \bH_{p_0,q,\omega_2,0}^{\alpha,\sigma}(T)$ be a solution to \eqref{eq_main}.
    Then there is a sufficiently large constant $\kappa=\kappa(d,p_0,q,K_0,\nu,\Lambda,\tilde{\alpha},\tilde{\sigma})>1$ such that for any $t_0\in(-\infty,T)$ and $s>0$, if
    \begin{equation} \label{eq7172235}
        |\cC_{\tilde{R}}(t_0)\cap \cA(\kappa s)| \geq \gamma |\cC_{\tilde{R}}(t_0)|,
    \end{equation}
    then 
    \begin{equation*}
        \widehat{\cC_{\tilde{R}}}(t_0) \subset \cB_{\gamma}(s),
    \end{equation*}
    where $\tilde{R}:=R/8$. Moreover, the claim remains valid if $\cA(\kappa s)$ and $\cB_{\gamma}(s)$ are replaced with $\cA'(\kappa s)$ and $\cB_{\gamma}'(s)$, respectively.
\end{lemma}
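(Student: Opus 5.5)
We argue by contraposition: assume some point $\tau\in\widehat{\cC_{\tilde R}}(t_0)$ lies outside $\cB_\gamma(s)$, so that
\begin{equation*}
\sM_{t,p_0}[f](\tau)\le \gamma^{1/p_0}s,\qquad \sM_{t,p_0}[\cL u](\tau)\le \gamma^{1/p_1}s,
\end{equation*}
and show that $|\cC_{\tilde R}(t_0)\cap\cA(\kappa s)|<\gamma|\cC_{\tilde R}(t_0)|$ once $\kappa$ is large.

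\textbf{Setup.} If $\cC_{\tilde R}(t_0)\cap(-\infty,0]$ contains most of the interval, we note that $\cL u=0$ for $t<0$, so only times in $(0,T)$ contribute to $\cA(\kappa s)$. We choose $t_1:=\min\{t_0+\tilde R,T\}$ and, if $t_1\le 0$, the set $\cA(\kappa s)\cap\cC_{\tilde R}(t_0)$ is empty and there is nothing to prove. Otherwise, we apply Lemma~\ref{lem7021436} with the endpoint $t_1$ and radius $R$ (recall $\tilde R=R/8$). Then $(t_1-R/4,t_1)\supset \cC_{\tilde R}(t_0)\cap(-\infty,T)$, since $2\tilde R=R/4$. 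This yields $u=v+w$ on $(-\infty,t_1)$.

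\textbf{Bounding the right-hand sides.} Every interval $(t_1-2^kR,t_1)$ contains $\tau$ and has length comparable to the interval $\cC_{2^{k}R}(\cdot)$ containing it. Hence
\begin{equation*}
\left(\aint_{t_1-2^kR}^{t_1}\|\cL u\|_X^{p_0}\,dt\right)^{1/p_0}\le N\sM_{t,p_0}[\cL u](\tau)\le N\gamma^{1/p_1}s,
\end{equation*}
and similarly the $f$-averages are at most $N\gamma^{1/p_0}s$. Here the zero extension for $t<0$ and the indicator $1_{(-\infty,T)}$ in \eqref{eq7172203} make these comparisons legitimate. Summing the geometric series $\sum_k 2^{-k\alpha}$, which is bounded since $\alpha\ge\tilde\alpha$, we obtain from \eqref{eq7020015} and \eqref{eq7011825}
\begin{align*}
\left(\aint_{t_1-R/4}^{t_1}\|\cL v\|_X^{p_1}\right)^{1/p_1}&\le N_1\gamma^{1/p_1}s+N_1\gamma^{1/p_0}s\le 2N_1\gamma^{1/p_1}s,\\
\left(\aint_{t_1-R}^{t_1}\|\cL w\|_X^{p_0}\right)^{1/p_0}&\le N_1\gamma^{1/p_0}s,
\end{align*}
where we used $\gamma<1$ and $p_1>p_0$.

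\textbf{Chebyshev step.} Set $\kappa=2\cdot 4N_1\cdot c$ for a suitable $c$. Since $\|\cL u\|_X\le\|\cL v\|_X+\|\cL w\|_X$,
\begin{equation*}
|\cC_{\tilde R}(t_0)\cap\cA(\kappa s)|\le |\{\|\cL v\|_X>\kappa s/2\}|+|\{\|\cL w\|_X>\kappa s/2\}|,
\end{equation*}
with both sets taken within $(t_1-R/4,t_1)$. By Chebyshev's inequality, the first term is at most $(\kappa/2)^{-p_1}(2N_1)^{p_1}\gamma R/4$; when $p_1=\infty$ this set is empty if $\kappa>4N_1$. The second term is at most $(\kappa/2)^{-p_0}N_1^{p_0}\gamma R$. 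Since $|\cC_{\tilde R}(t_0)|=R/4$, choosing $\kappa$ large, depending only on $N_1$ and $p_0$ (note $p_1>p_0$), makes the total strictly less than $\gamma R/4$. This contradicts \eqref{eq7172235}.

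The $\lambda$-version is identical, using the third estimate of Lemma~\ref{lem7021436} together with the $\lambda w$ part of \eqref{eq7011825}. The main obstacle is the bookkeeping near the boundary $t=T$, where $\cC_{\tilde R}(t_0)$ may stick out beyond $T$. This is handled by working with $\widehat{\cC}$ and the truncation in $\sM_{t,p_0}$, and by using $t_1\le T$, which is allowed because Lemma~\ref{lem7021436} permits $t_0\in(0,T]$.
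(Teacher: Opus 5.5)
Your proposal is correct and follows essentially the same route as the paper. The paper also picks a point of $\widehat{\cC_{\tilde R}}(t_0)$ outside $\cB_\gamma$, applies Lemma \ref{lem7021436} at the endpoint $t_1=\min\{t_0+\tilde R,T\}$, bounds the resulting averages by the truncated maximal functions, and concludes by Chebyshev. The only cosmetic differences are that the paper normalizes $s=1$ and splits the threshold as $C_1+(\kappa-C_1)$ rather than $\kappa s/2+\kappa s/2$.
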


\begin{proof}
    Due to the similarity, we only consider $\cA(\kappa s)$ and $\cB_{\gamma}(s)$. By dividing the equation \eqref{eq_main} by $s$, it suffices to assume that $s=1$. On the other hand, if $t_0+\tilde{R}\leq0$, then $\cC_{\tilde{R}}(t_0)\cap \cA(\kappa)=\emptyset$. Thus, we only consider the case when $t_0+\tilde{R}>0$.

    We proceed by contradiction. Suppose that there is $t\in \widehat{\cC_{\tilde{R}}}(t_0)$ such that $t\notin \cB_{\gamma}(1)$. Then we have
    \begin{align} \label{eq7172206}
        \gamma^{-1/p_0} \sM_{t,p_0}[f](t) + \gamma^{-1/p_1} \sM_{t,p_0}[\cL u](t) \leq 1.
    \end{align}
    Note that for $t_1:=\min\{t_0+\tilde{R},T\} \geq 0$, we have
    \begin{equation*}
        t\in \widehat{\cC_{\tilde{R}}}(t_0) \subset (t_1-R/4,t_1) \subset (t_1-2^kR,t_1) \quad \forall k\in\bN_0.
    \end{equation*}
Thus, by Lemma \ref{lem7021436} with $t_1$ in place of $t_0$, \eqref{eq7172203}, and \eqref{eq7172206}, one can decompose $u$ as $u=v+w$, where $v,w \in \bH_{p_0,q,\omega_2,0}^{\alpha,\sigma}(t_1)$, and $v$ and $w$ satisfy
    \begin{align*}
        \left(\aint_{t_1-R}^{t_1} \|\cL w(t,\cdot)\|_{X}^{p_0} dt\right)^{1/p_0} \leq N \left(\aint_{t_1-2R}^{t_1} \|f(t,\cdot)\|_{X}^{p_0} dt\right)^{1/p_0} \leq N\gamma^{1/p_0}
    \end{align*}
    and
    \begin{align} \label{eq7172238}
        &\left(\aint_{t_1-R/4}^{t_1} \|\cL v(t,\cdot)\|_{X}^{p_1} dt\right)^{1/p_1} \nonumber
        \\
        &\leq N \sum_{k=0}^{\infty} 2^{-k\alpha} \left(\aint_{t_{1}-2^{k}R}^{t_1} \|\cL u(t,\cdot)\|_{X}^{p_0} dt \right)^{1/p_0} + N \left(\aint_{t_1-2R}^{t_1} \|f(t,\cdot)\|_{X}^{p_0} dt \right)^{1/p_0} \nonumber
        \\
        &\leq N(\gamma^{1/p_1} + \gamma^{1/p_0}) \leq N\gamma^{1/p_1}.
    \end{align}

Let $\kappa>0$ and $C_1\in(0,\kappa)$ be constants, which will be determined later.
    By the Chebyshev inequality, 
  \begin{align*}
      |\cC_{\tilde{R}}(t_0)\cap \cA(\kappa)| &\leq |\{t\in (t_1-R/4,t_1): \|\cL u(t,\cdot)\|_{X} > \kappa\}|
      \\
      &\leq |\{t\in (t_1-R/4,t_1): \|\cL w(t,\cdot)\|_{X}>\kappa-C_1\}|
      \\
      &\quad+ |\{t\in(t_1-R/4,t_1): \|\cL v(t,\cdot)\|_{X}>C_1\}|
      \\
      &\leq \int_{t_1-R/4}^{t_1} \left( \frac{\|\cL w(t,\cdot)\|_{X}^{p_0}}{(\kappa-C_1)^{p_0}} + \frac{\|\cL v(t,\cdot)\|_{X}^{p_1}}{C_1^{p_1}} \right) dt
      \\
      &\leq N R \left( \frac{\gamma}{(\kappa-C_1)^{p_0}} + \frac{\gamma}{C_1^{p_1}} \right).
  \end{align*}
  By taking sufficiently large $C_1$ and $\kappa$ in order, we obtain
    \begin{equation*}
         |\cC_{\tilde{R}}(t_0)\cap \cA(\kappa)| < \gamma |\cC_{\tilde{R}}(t_0)|,
    \end{equation*}
    which contradicts \eqref{eq7172235}. In particular, when $p_1=\infty$, it suffices to first choose $C_1>N$, where $N$ is the last constant in \eqref{eq7172238}, and then proceed with the above argument.
    The lemma is proved.
\end{proof}

Next, we show that if the assertions of Theorem \ref{thm_main} hold in $\bH_{p,q,\omega,0}^{\alpha,\sigma}(T)$, then the corresponding assertions also hold in its dual space $\bH_{p',q',\omega',0}^{\alpha,\sigma}(T)$, where $1/p+1/p'=1$, $1/q+1/q'=1$, and $\omega':=\omega_1^{-1/(p-1)} \omega_2^{-1/(q-1)}$. We emphasize that this duality principle remains valid in the setting of general space-time weights and also for the odd spaces.

\begin{lemma} \label{lem_dual} 
Let the parameters in Theorem \ref{thm_main}, except for $\omega_1$, be fixed.

(i) Suppose that the assertions of Theorem \ref{thm_main} hold for every $\omega_1$ satisfying $[\omega_1]_{A_p(\bR)}\leq K_0$.
Then for every such $\omega_1$, the same assertions also hold with $(p',q',\omega')$ in place of $(p,q,\omega)$, where $1/p+1/p'=1$, $1/q+1/q'=1$, and $\omega'$ is of the form
\begin{equation*}
\omega'(t,x) = \omega_1(t)^{-1/(p-1)} \omega_2(x)^{-1/(q-1)}.
\end{equation*}

(ii) If the assertions of Theorem \ref{thm_main} hold with $\omega_1 \equiv 1$, then they also hold with $(p',q',\omega')$ in place of $(p,q,\omega)$, where $\omega'(t,x):=\omega_2^{-1/(q-1)}(x)$.

(iii) The statements analogous to (i) and (ii) hold in the setting of Theorem \ref{thm_odd}.
\end{lemma}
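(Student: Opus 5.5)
We argue by duality through the adjoint (time-reversed) problem. Write $\partial_t^\alpha$ on $(0,T)$ with zero initial condition; its formal adjoint with respect to $\int_0^T\int_{\bR^d}\cdot\,dxdt$ is the right-sided (Weyl-type) derivative with zero terminal condition at $T$. With the change of variables $t\mapsto T-t$ (for $T<\infty$; for $T=\infty$ we first work on $(0,T')$ and let $T'\to\infty$, using that the constants do not depend on $T$), the adjoint problem becomes again an equation of the form \eqref{eq_main} with kernel $K(T-t,-y)$. This kernel still satisfies Assumption \ref{as_nonde} with the same $\nu,\Lambda$. Likewise $\cL^*$ has kernel $K_\cL(T-t,-y)$ and satisfies Assumption \ref{as_upper}. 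The weight $\omega_1(T-t)$ has the same $A_p$ constant.

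\textbf{The estimate.} Take $u\in \bH_{p',q',\omega',0}^{\alpha,\sigma}(T)$ solving \eqref{eq_main}. By Lemma \ref{lem_dense} we may assume $u$ is smooth and compactly supported in $x$ with $u(0,\cdot)=0$. The dual of $\bL_{p',q',\omega'}(T)$ under the unweighted pairing is $\bL_{p,q,\omega}(T)$: one checks $\|g\|_{\bL_{p',q',\omega'}}=\sup\{\int gh: \|h\|_{\bL_{p,q,\omega}}\le1\}$, because $\omega_i'^{\,1-p'}=\omega_i$.

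To bound $\|\cL u\|$, fix $h\in C_c^\infty$ (these are dense by Remark \ref{remCc}). By the hypothesis in the weights $\omega_1(T-\cdot)\omega_2$, solve the reversed adjoint problem for $\phi$:
\[
-\partial^\alpha_{T-}\phi=L^*\phi-\lambda\phi+\cL^*h,\qquad \phi(T)=0.
\]
Then integrate by parts:
\[
\int u\,\cL^*h=\int u\,(-\partial^\alpha_{T-}\phi-L^*\phi+\lambda\phi)=\int f\phi .
\]
This gives $|\int \cL u\, h|\le \|f\|_{\bL_{p',q',\omega'}}\|\phi\|_{\bL_{p,q,\omega}}$. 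It then remains to bound $\|\phi\|$ by $\|h\|$.

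\textbf{Choosing the right adjoint source.} Bounding $\|\phi\|$ by $\|h\|$ directly only works when $\lambda>0$ or $T<\infty$, and even then the constant would depend on $T$. So the source should be chosen differently.

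For $\cL u$: let $\phi$ solve the adjoint equation with right-hand side $(-\Delta)^{\sigma/2}$ applied to a solution of an auxiliary problem. Alternatively, and more simply, take the adjoint right-hand side to be $h$ itself, so that $\int uh=\int f\phi$. Then:
\begin{itemize}
\item $\lambda\|u\|\le N\|f\|$ follows from $\lambda\|\phi\|\le N\|h\|$.
\item $\|\partial_t^\alpha u\|$ follows by pairing with $h$ and using $\partial^\alpha_{T-}\phi$.
\item For $\|\cL u\|$, pair with $h$ and write $\int \cL u\,h=\int u\,\cL^*h$. Let $\psi$ solve the adjoint equation with source $h$, so $\cL^*\psi$ is controlled. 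Commute using the equation: $\cL u=\cL(-\lambda+L-\partial_t^\alpha)^{-1}f$.
\end{itemize}
So in the end the pairing identity is $\int \cL u\,h=\int f\,\cL^*\psi$, with $\psi$ the adjoint solution with source $h$. This requires showing that $\cL$ commutes appropriately with the solution operator. That commutation is the \emph{main obstacle}, because $L$ and $\cL$ are time-dependent and do not commute.

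\textbf{Resolving the obstacle.} I would avoid the commutation by using the known estimate $\|\cL u\|\le N\|(-\Delta)^{\sigma/2}u\|$ from Proposition \ref{lem_conti}, and then estimating $(-\Delta)^{\sigma/2}u$ by duality. Pairing with $h$, we have $\int(-\Delta)^{\sigma/2}u\,h=\int u\,(-\Delta)^{\sigma/2}h$. Write $(-\Delta)^{\sigma/2}h=-\partial^\alpha_{T-}\psi-L^*\psi+\lambda\psi$ with $\psi:=$ the adjoint solution with source $(-\Delta)^{\sigma/2}h$. This is still circular.

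Instead, I would take $\psi$ solving the adjoint equation with source $h$. Then
\[
\int u\,h=\int f\,\psi,
\]
and similarly
\[
\int(-\Delta)^{\sigma/2}u\,h=\int f\,(-\Delta)^{\sigma/2}\tilde\psi,
\]
where $\tilde\psi$ solves the adjoint equation with source $h$. Here we use that $(-\Delta)^{\sigma/2}$ is $t$-independent and commutes with $\partial_t^\alpha$, but not with $L$. To handle that, apply the argument to the mollified $u$ and use uniqueness: $(-\Delta)^{\sigma/2}u$ is a weak solution only if $L$ commutes with $(-\Delta)^{\sigma/2}$. Both are translation invariant in $x$ for each fixed $t$, so they \emph{do} commute as Fourier multipliers in $x$.

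Hence $\int(-\Delta)^{\sigma/2}u\,h=\int f\,(-\Delta)^{\sigma/2}\psi$, and $\|(-\Delta)^{\sigma/2}\psi\|_{\bL_{p,q,\omega}}\le N\|h\|$ by the hypothesis with $\cL=-(-\Delta)^{\sigma/2}$. The same identity with $\partial^\alpha$ gives the time-derivative bound.

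\textbf{Remaining parts.}
\begin{itemize}
\item \emph{Solvability} in the dual space follows from the a priori estimate together with the method of continuity from $L=-(-\Delta)^{\sigma/2}$. The base case $L=-(-\Delta)^{\sigma/2}$ is solved, for example, by approximating $f$ by functions in $\bL_{p,q,\omega}\cap\bL_{p',q',\omega'}$ and using solvability in the original space together with the new estimate.
\item \emph{Uniqueness} is immediate from the estimate when $\lambda>0$. When $T<\infty$ it follows by the usual argument on finite intervals.
\item \emph{Part (ii)} is the special case $\omega_1\equiv1$, with the reversed weight still $1$.
\item \emph{Part (iii)} is identical, because the dual of $\bm{\fL}_{q',p',\omega'}$ is $\bm{\fL}_{q,p,\omega}$ under the same pairing.
\end{itemize}
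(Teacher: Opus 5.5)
Your a priori estimate, once the abandoned attempts are removed, is the paper's argument. You solve the time-reversed adjoint problem (kernel $K(-t,-y)$, weight $\omega_1(-t)\omega_2(x)$, admissible by hypothesis) and reduce to $\cL=-(-\Delta)^{\sigma/2}$ via Proposition~\ref{lem_conti}. You then use that this operator commutes with $\partial_t^\alpha$ and, for each fixed $t$, with the Fourier multiplier $L(t)$. This gives $\int(-\Delta)^{\sigma/2}u\,h=\int f\,(-\Delta)^{\sigma/2}\psi$, and the hypothesis bounds $\|(-\Delta)^{\sigma/2}\psi\|$. One correction: the bound for $\partial_t^\alpha u$ does not come from ``the same identity with $\partial^\alpha$''. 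That would require $\partial_t^\alpha$ to commute with $L(t)$, which fails when $K$ is merely measurable in $t$. It follows instead from $\partial_t^\alpha u=Lu-\lambda u+f$ and Proposition~\ref{lem_conti}.

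The genuine gap is in solvability. You propose to take $f\in\bL_{p,q,\omega}(T)\cap\bL_{p',q',\omega'}(T)$, solve in the original space, and invoke the new estimate. But that estimate is a priori: it applies only to functions already known to lie in $\bH^{\alpha,\sigma}_{p',q',\omega',0}(T)$. The solution you produce is only known to lie in $\bH^{\alpha,\sigma}_{p,q,\omega,0}(T)$, and in general there is no inclusion between these spaces. So you cannot apply the estimate to it, cannot build a Cauchy sequence, and cannot start the method of continuity.

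The missing idea is to use the duality pairing as a \emph{regularity} statement, not only as a bound. In the paper, for $f\in C_c^\infty((0,T)\times\bR^d)$ one starts from a solution $u$ in an auxiliary space where integration by parts is legitimate. This is the unweighted space $\bH^{\alpha,\sigma}_{q',q',1,0}(T)$ from \cite{DL23}, which already covers time-measurable $L$, so no continuity method is needed. For smooth $w$ with $w(-T,\cdot)=0$, set $\phi(-t,x)=\partial^\alpha_{t,-T}w-L^*w+\lambda w$. The hypothesis applied to $w$ then gives
$|\int\phi\,\cL u|=|\int f\,\cL^*w(-t,\cdot)|\le N\|f\|_{\bL_{p',q',\omega'}(T)}\|\phi\|_{\bL_{p,q,\omega}(T)}$.
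Such $\phi$ are dense in $\bL_{p,q,\omega}(T)$, by the assumed solvability together with Lemma~\ref{lem_dense}. Hence $\cL u\in\bL_{p',q',\omega'}(T)$, and the same argument gives $\partial_t^\alpha u,\lambda u\in\bL_{p',q',\omega'}(T)$. If $\lambda=0$, membership of $u$ itself follows from \eqref{eq8110024}. Remark~\ref{remCc} and the estimate then handle general $f$.

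For $T=\infty$ (so $\lambda>0$) you also need the patching step, which your sketch omits: solutions on $(0,n)$ agree by uniqueness and are bounded uniformly in $n$.
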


\begin{proof}
We only prove $(i)$ and $(ii)$, since the corresponding result for the odd space follows in the same way.
We establish $(i)$ and $(ii)$ together. In both cases, we note that $\omega_1^{-1/(p-1)}\in A_{p'}(\bR)$ and $\omega_2^{-1/(q-1)}\in A_{q'}(\bR^d)$ (see \cite[Proposition 7.1.5]{G14}), and thus the weight $\omega'$ is admissible for the dual exponents $(p',q')$.

We first show that it suffices to consider the case when $T<\infty$. 
For the a priori estimate \eqref{est_main}, we clearly have
\begin{equation*}
    \bH_{p',q',\omega',0}^{\alpha,\sigma}(\infty) \subset \bH_{p',q',\omega',0}^{\alpha,\sigma}(T)
\end{equation*}
for any $T<\infty$. Thus, once \eqref{est_main} is established for $T<\infty$ with a constant $N$ independent of $T$, the case $T=\infty$ follows directly by letting $T\to\infty$.

Next, we address the solvability when $T=\infty$ under the assumption that the assertions are proved for every $T<\infty$.
Let $f\in \bL_{p',q',\omega'}(\infty)$. Then one can find a solution $u_n\in \bH_{p',q',\omega',0}^{\alpha,\sigma}(n)$ to \eqref{eq_main} for each $n\in\bN$. Obviously, $\bH_{p',q',\omega',0}^{\alpha,\sigma}(n) \subset \bH_{p',q',\omega',0}^{\alpha,\sigma}(m)$ if $m\leq n$. Hence, by the uniqueness result in $\bH_{p',q',\omega',0}^{\alpha,\sigma}(m)$, we have $u_n = u_m$ when $m\leq n$. Thus, we can define $u$ by setting $u = u_n$ on $(0,n)\times\bR^d$ for each $n\in\bN$. It remains to verify that $u\in \bH_{p',q',\omega',0}^{\alpha,\sigma}(\infty)$. Recall that $\lambda>0$ when considering the solvability for $T=\infty$. Thus, by using \eqref{est_main} with $\cL=-(-\Delta)^{\sigma/2}$, one can deduce that
\begin{align*}
    \|u\|_{\bH_{p',q',\omega'}^{\alpha,\sigma}(n)} = \|u_n\|_{\bH_{p',q',\omega'}^{\alpha,\sigma}(n)} \leq N(\lambda) \|f\|_{\bL_{p',q',\omega'}(n)} \leq N(\lambda) \|f\|_{\bL_{p',q',\omega'}(\infty)} <\infty.
\end{align*}
Letting $n\to\infty$, we have $u\in \bH_{p',q',\omega'}^{\alpha,\sigma}(\infty)$.
To complete this step, it remains to notice that \eqref{eq7291255} is satisfied for $\varphi\in C_c^\infty([0,\infty)\times\bR^d)$, which leads to $u\in \bH_{p',q',\omega',0}^{\alpha,\sigma}(\infty)$. Hence, throughout the rest of the proof, we assume that $T<\infty$.

We use a duality argument.
Since the solvability result holds for $(p,q,\omega)$, after shifting the time interval, for any $\phi\in \bL_{p,q,\omega}(T)$, there exists $w \in \mathbb{H}_{p,q,\tilde{\omega},0}^{\alpha,\sigma}(-T,0)$ satisfying
\begin{equation} \label{eq7292222}
\partial_{t,-T}^\alpha w(t,x) = L^*w(t,x) - \lambda w(t,x) + \phi(-t,x) \quad \text{ in }(-T,0)\times \bR^d,
\end{equation}
where $\tilde{\omega}(t,x):=\omega_1(-t)\omega_2(x)$ and $L^*$ is the operator with the kernel $K(-t,-y)$.
Moreover,
\begin{equation} \label{eq8081618}
\|\cL^* w\|_{\bL_{p,q,\tilde{\omega}}(-T,0)} + \lambda \|w\|_{\bL_{p,q,\tilde{\omega}}(-T,0)} \leq N\|\phi(-\cdot,\cdot)\|_{\bL_{p,q,\tilde{\omega}}(-T,0)} \leq N\|\phi\|_{\bL_{p,q,\omega}(T)},
\end{equation}
where $\cL^*$ is defined in the same way as $L^*$.
Here, we used the fact that either $\omega_1\in A_p(\bR)$ can be chosen arbitrarily or $\omega_1=1$.

We aim to prove the estimate \eqref{est_main} with $(p',q',\omega')$ in place of $(p,q,\omega)$.
Let $u\in \bH_{p',q',\omega',0}^{\alpha,\sigma}(T)$ be a solution to \eqref{eq_main}. By Lemma \ref{lem_dense}, there are sequences of functions $u_n\in C_c^\infty([0,T]\times\bR^d)$ and $w_n\in C_c^\infty([-T,0]\times\bR^d)$ such that $u_n(0,\cdot)=0$ and $w_n(-T,\cdot)=0$, and $u_n\to u$ and $w_n\to w$ in the corresponding spaces.
Letting
\begin{equation*}
    \phi_n(-t,x) := \partial_{t,-T}^\alpha w_n(t,x) - L^*w_n(t,x) + \lambda w_n(t,x)
\end{equation*}
and
\begin{equation*}
    f_n(t,x) := \partial_t^\alpha u_n(t,x) - Lu_n(t,x) + \lambda u_n(t,x),
\end{equation*}
by an integration by parts and the fact that $\bL_{p,q,\omega}(T)$ is the dual space of $\bL_{p',q',\omega'}(T)$,
\begin{align} \label{eq7211503}
&\int_0^T \int_{\bR^d}\phi_n(t,x) \cL u_n(t,x) dxdt \nonumber
\\
&= \int_{-T}^0 \int_{\bR^d} \phi_n(-t,x) \cL u_n(-t,x) dxdt \nonumber
\\
&= \int_{-T}^0\int_{\bR^d} \left(\partial_{t,-T}^\alpha w_n(t,x)-L^*w_n(t,x) +\lambda w_n(t,x)\right) \cL u_n(-t,x) dxdt \nonumber
\\
&= \int_0^T \int_{\bR^d} \left(\partial_t^\alpha u_n(t,x)-Lu_n(t,x) +\lambda u_n(t,x) \right) \cL^*w_n(-t,x) dxdt \nonumber
\\
&=
\int_0^T \int_{\bR^d} f_n(t,x) \cL^*w_n(-t,x) dxdt \nonumber
\\
&\leq
N\|f_n\|_{\bL_{p',q',\omega'}(T)} \|\cL^* w_n\|_{\bL_{p,q,\tilde{\omega}}(-T,0)} \nonumber
\\
&\leq
N\|f_n\|_{\bL_{p',q',\omega'}(T)} \|\phi_n\|_{\bL_{p,q,\omega}(T)}.
\end{align}
For the last inequality, we used \eqref{est_main}.
If we pass to the limit as $n\to\infty$, then by Proposition \ref{lem_conti}, we have
\begin{equation*}
    \left| \int_0^T \int_{\bR^d} \phi(t,x) \cL u(t,x) dxdt \right| \leq N\|f\|_{\bL_{p',q',\omega'}(T)} \|\phi\|_{\bL_{p,q,\omega}(T)}.
\end{equation*}
Since $\phi \in \bL_{p,q,\omega}(T)$ is arbitrary, we obtain the estimate for $\cL u$ in $\bL_{p',q',\omega'}(T)$. By considering $\lambda u$ in place of $\cL u$ and using \eqref{eq_main}, we obtain the a priori estimate \eqref{est_main} with $(p',q',\omega')$ in place of $(p,q,\omega)$.

Lastly, we deal with the solvability.
Let $f\in C_c^\infty((0,T)\times\bR^d)$.
The existence result in \cite[Theorem 2.6]{DL23} yields a solution $u\in\bH_{q',q',1,0}^{\alpha,\sigma}(T)$ to \eqref{eq_main}. 
Let $w\in C_c^\infty([-T,0]\times\bR^d)$ with $w(-T,x)=0$, and denote
\begin{equation} \label{eq8081624}
    \phi(-t,x):= \partial_{t,-T}^\alpha w(t,x) - L^*w(t,x) + \lambda w(t,x).
\end{equation}
Since $w$ is smooth, as in \eqref{eq7211503}, by an integration by parts and H\"older's inequality,
\begin{align*}
    \left|\int_0^T \int_{\bR^d}\phi \cL u dxdt\right| &= \left|\int_0^T \int_{\bR^d} f(t,x) \cL^*w(-t,x) dxdt\right|
\\
&\leq
N\|f\|_{\bL_{p',q',\omega'}(T)} \|\cL^* w\|_{\bL_{p,q,\tilde{\omega}}(-T,0)}.
\end{align*}
Here, we note that $w$ satisfies \eqref{eq8081618}, which leads to
\begin{equation} \label{eq8081627}
    \left|\int_0^T \int_{\bR^d}\phi \cL u dxdt\right| \leq N\|f\|_{\bL_{p',q',\omega'}(T)} \|\phi\|_{\bL_{p,q,\omega}(T)}.
\end{equation}

We now take $\phi \in\bL_{p,q,\omega}(T)$ to be arbitrary, rather than defined by \eqref{eq8081624}. By the assumption, there is a solution $w\in \bH_{p,q,\tilde{\omega},0}^{\alpha,\sigma}(-T,0)$ to \eqref{eq7292222}. By Lemma \ref{lem_dense}, there is a sequence of functions $w_n\in C_c^\infty([-T,0]\times\bR^d)$ such that $w_n\to w$ in $\bH_{p,q,\tilde{\omega},0}^{\alpha,\sigma}(-T,0)$. By Proposition \ref{lem_conti}, letting
\begin{equation*}
    \phi_n(-t,x):=\partial_{t,-T}^\alpha w_n(t,x) - L^*w_n(t,x) + \lambda w_n(t,x),
\end{equation*}
we have $\phi_n\to\phi$ in $\bL_{p,q,\omega}(T)$. Thus, 
by replacing $\phi$ with $\phi_n$ in \eqref{eq8081627} and letting $n\to\infty$, we see that \eqref{eq8081627} holds for general $\phi\in\bL_{p,q,\omega}(T)$. By the arbitrary choice of $\phi$, we deduce that $\cL u\in \bL_{p',q',\omega'}(T)$. Similarly, one can show that $\partial_t^\alpha u,\lambda u\in \bL_{p',q',\omega'}(T)$, which yields $u \in \bH_{p',q',\omega',0}^{\alpha,\sigma}(T)$ if $\lambda>0$.
When $\lambda=0$, by \cite[Lemma 2.9]{KW25},
\begin{equation} \label{eq8110024}
    \|u\|_{\bL_{p',q',\omega'}(T)} \leq NT^\alpha \|\partial_t^\alpha u\|_{\bL_{p',q',\omega'}(T)}<\infty.
\end{equation}
Thus, the assertions of the theorem are proved for $(p',q',\omega')$.
Lastly, we only remark that the odd-space version of \eqref{eq8110024} follows by the extrapolation theorem. Thus, $(iii)$ can be proved by repeating the same argument.
The lemma is proved.
\end{proof}

\begin{proof}[Proof of Proposition \ref{prop7021543}]
As in the proof of Lemma \ref{lem_dual}, it suffices to prove the assertions only when $T<\infty$. We divide the proof into several steps according to the range of $p$.

    \textbf{Step 1.} $p\in(p_0,p_1)$, where $p_1$ is taken from Lemma \ref{lem7021436}.

    In this step, we prove both the a priori estimate \eqref{est_main} and the solvability when $p\in (p_0,p_1)$.

First, we handle \eqref{est_main}. Actually, it suffices to prove the result when $\cL=-(-\Delta)^{\sigma/2}$. Indeed, for general $\cL$, one just needs to use \eqref{eq7131450}.

Suppose that $u\in \bH_{p,q,\omega_2,0}^{\alpha,\sigma}(T)$.
    Since $T<\infty$, we have
    \begin{equation} \label{eq7292027}
        \bL_{p,q,\omega_2}(T) \subset \bL_{p_0,q,\omega_2}(T),
    \end{equation}
    and thus $u\in \bH_{p_0,q,\omega_2,0}^{\alpha,\sigma}(T)$. Since $\cL=-(-\Delta)^{\sigma/2}$ is independent of $t$, we can apply Lemma \ref{lem7172353} and the crawling of ink spots lemma (see \cite[Lemma A.20]{DK19}) to obtain that there is $\kappa(d,p_0,q,K_0,\nu,\Lambda,\tilde{\alpha},\tilde{\sigma}) > 1$ so that
    \begin{equation*}
        |\cA(\kappa s)| \leq N\gamma |\cB_{\gamma}(s)|.
    \end{equation*}
    Here, $\cA(s)$ and $\cB_{\gamma}(s)$ are defined as in \eqref{eq7172355} and \eqref{eq7172356}, respectively. Thus,
    \begin{align*}
        \|\cL u\|_{\bL_{p,q,\omega_2}(T)}^p &= p\kappa^p \int_0^\infty |\cA(\kappa s)| s^{p-1} ds \leq N\gamma\kappa^p \int_0^\infty |\cB_{\gamma}(s)|s^{p-1} ds
        \\
        &\leq N\gamma^{1-p/p_0} \|\sM_{t,p_0}[f]\|_{L_p((-\infty,T))}^p
        \\
        &\quad+ N\gamma^{1-p/p_1} \|\sM_{t,p_0}[\cL u]\|_{L_p((-\infty,T))}^p,
    \end{align*}
    where $X:=L_{q,\omega_2}(\bR^d)$.
    By the Hardy–Littlewood theorem,
    \begin{equation} \label{eq7180051}
        \|\cL u\|_{\bL_{p,q,\omega_2}(T)}^p \leq N\gamma^{1-p/p_0} \|f\|_{\bL_{p,q,\omega_2}(T)}^p + N\gamma^{1-p/p_1} \|\cL u\|_{\bL_{p,q,\omega_2}(T)}^p.
    \end{equation}
    Let us recall that $\|\cL u\|_{\bL_{p,q,\omega_2}(T)}<\infty$ due to Proposition \ref{lem_conti} and $u\in \bH_{p,q,\omega_2,0}^{\alpha,\sigma}(T)$. 
Therefore, by choosing a sufficiently small $\gamma\in(0,1)$ so that $N\gamma^{1-p/p_1}<1/2$, we can absorb the last term of \eqref{eq7180051} to the left. Similarly, by repeating the above argument with $\cA'(\kappa s)$ and $\cB_{\gamma}'(s)$ in place of $\cA(\kappa s)$ and $\cB_{\gamma}(s)$, one can estimate $\lambda u$. The estimate for $\partial_t^\alpha u$ then follows from \eqref{eq_main}.
Therefore, \eqref{est_main} is proved for $p\in (p_0,p_1)$.

Next, we prove the solvability. Due to the method of continuity, it suffices to consider $L=-(-\Delta)^{\sigma/2}$. Let $f\in \bL_{p,q,\omega_2}(T)$. Since the assertions of Theorem \ref{thm_main} hold with $p=p_0$, \eqref{eq7292027} implies that there is a solution $u\in \bH_{p_0,q,\omega_2,0}^{\alpha,\sigma}(T)$ to \eqref{eq_main}.
Let $\eta\in C_c^\infty((0,1))$ be a nonnegative function such that $\int_{\bR} \eta dt=1$. 
For a function $h=h(t,x)$, we denote its mollification in the time variable by
    \begin{equation*}
        h^\varepsilon(t,x) := \int_{\bR} h(s,x) \eta^\varepsilon(t-s)ds, \quad \varepsilon>0,
    \end{equation*}
    where $\eta^\varepsilon(t):=\varepsilon^{-1}\eta(t/\varepsilon)$.
    By mollifying \eqref{eq_main} in the time variable, we can proceed as in the derivation of \eqref{eq8072150} to obtain that
    \begin{equation*}
        \partial_t^\alpha u^\varepsilon = -(-\Delta)^{\sigma/2}u^\varepsilon -\lambda u^\varepsilon + f^\varepsilon \quad \text{in } \bR^d_T.
    \end{equation*}
By H\"older's inequality,
    \begin{equation*}
        \|h^\varepsilon(t,\cdot)\|_X \leq N(\varepsilon)\|h\|_{\bL_{p_0,q,\omega_2}(T)}\|\eta\|_{L_{p_0'}(\bR)} \quad \forall t\in[0,T],
    \end{equation*}
    where $1/p_0+1/p_0'=1$. Since $T<\infty$, one can see that $u^\varepsilon \in \bH_{p,q,\omega_2,0}^{\alpha,\sigma}(T)$ for every $\varepsilon>0$. Thus, we have \eqref{est_main} with $u^\varepsilon$ and $f^\varepsilon$ in place of $u$ and $f$, respectively. As in Remark \ref{remCc}, $f^\varepsilon\to f$ in $\bL_{p,q,\omega_2}(T)$, which yields that
    \begin{equation*}
         \|\partial_t^{\alpha}u\|_{\bL_{p,q,\omega_2}(T)} + \|\cL u\|_{\bL_{p,q,\omega_2}(T)} + \lambda\|u\|_{\bL_{p,q,\omega_2}(T)} <\infty.
    \end{equation*}
If $\lambda>0$, it follows immediately that $u\in \bH_{p,q,\omega_2,0}^{\alpha,\sigma}(T)$. When $\lambda=0$, due to \eqref{eq8110024}, we also have $u\in \bH_{p,q,\omega_2,0}^{\alpha,\sigma}(T)$, and the solvability follows in both cases.

    \textbf{Step 2.} The case $p\in(p_0,\infty)$.
    
By the result of Step \textbf{1}, we see that the assumptions of Lemma \ref{lem7021436} are satisfied with any $p\in(p_0,p_1)$ in place of $p_0$.
Recall that $p_1-p_0>\tilde{\alpha}/2$, where $\tilde{\alpha}$ is independent of $p_0$. We may therefore repeatedly apply the preceding result, taking at each step a newly obtained exponent $p\in(p_0,p_1)$ as the new starting exponent. Since the increment at each step can be chosen uniformly positive, after finitely many iterations we obtain both \eqref{est_main} and the solvability for any $p>p_0$.

\textbf{Step 3.} The case $p\in(1,\infty)$.

Let $q':=q/(q-1)$ and $\omega_2':=\omega_2^{-1/(q-1)}\in A_{q'}(\bR^d)$.
Let $p'\in (1,\frac{p_0}{p_0-1})$ and denote $p:=p'/(p'-1)\in (p_0,\infty)$.
Then by the result of Step \textbf{2}, the assertions of Theorem \ref{thm_main} hold with $(p,q,\omega_2)$. Thus, by Lemma \ref{lem_dual} $(ii)$, the estimate \eqref{est_main} and the solvability are obtained in $\bH_{p',q',\omega_2',0}^{\alpha,\sigma}(T)$. Again by the result of Step \textbf{2}, the assertions hold for every $p'\in(1,\infty)$ with $q'$ and $\omega_2'$ fixed.

We now return to the function space $\bH_{p,q,\omega_2,0}^{\alpha,\sigma}(T)$ originally under consideration. Let $p'=p/(p-1)$. Due to the above argument, the assertions hold in $\bH_{p',q',\omega_2',0}^{\alpha,\sigma}(T)$. Thus, by Lemma \ref{lem_dual} $(ii)$, both the estimate and the solvability are obtained in $\bH_{p,q,\omega_2,0}^{\alpha,\sigma}(T)$.
The proposition is proved.
\end{proof}

\subsection{Extension to product weights in Sobolev spaces} \label{sec_time_gen}

In this section, we prove Theorem \ref{thm_time}, which generalizes Proposition \ref{prop7021543} to function spaces with weights in the time variable.

For a Banach space $X$, $B\subset \bR$, and $\vartheta\in(0,1)$, we denote
\begin{equation*}
    [h]_{C^\vartheta(B;X)}:=\sup_{s,t\in B, s\neq t}\frac{\|h(s,\cdot)-h(t,\cdot)\|_X}{|s-t|^\vartheta}.
\end{equation*}

\begin{lemma} \label{lem7182203}
    Let the parameters in Theorem \ref{thm_main}, except for $p$ and the weight $\omega_1$, be given, and let $\omega_2\in A_q(\bR^d)$ satisfy $[\omega_2]_{A_q(\bR^d)}\leq K_0$. Suppose that, for some $p_0\in(1,\infty)$, the assertions of Theorem \ref{thm_main} hold with $p=p_0$ and with the spatial weight $\omega_2(x)$ in place of the product weight $\omega_1(t)\omega_2(x)$.
    Assume further that $\cL$ is independent of $t$.
    Let $u\in \bH_{p_0,q,\omega_2,0}^{\alpha,\sigma}(T)$ be a solution to \eqref{eq_main}.
    Then for any $t_0\in (0,T)$ and $R>0$, there exist $v,w\in \bH_{p_0,q,\omega_2,0}^{\alpha,\sigma}(t_0)$ such that $u=v+w$,
    \begin{align} \label{eq7021535}
    &\left(\aint_{t_0-R}^{t_0} \|\cL w(t,\cdot)\|_{X}^{p_0} dt\right)^{1/p_0} + \left(\aint_{t_0-R}^{t_0} \|\lambda w(t,\cdot)\|_{X}^{p_0} dt \right)^{1/p_0} \nonumber
       \\
       &\leq N \left(\aint_{t_0-2R}^{t_0} \|f(t,\cdot)\|_{X}^{p_0} dt\right)^{1/p_0},
    \end{align}
    and for any $\vartheta\in(0,\tilde{\alpha})$,
    \begin{align} \label{eq7021537}
         &R^{\vartheta}\left[ \cL v \right]_{C^{\vartheta}((t_0-R/4,t_0);X)} \nonumber
         \\
         &\leq N \sum_{k=0}^{\infty} 2^{-k\alpha} \left(\aint_{t_{0}-2^kR}^{t_0} \|\cL u(t,\cdot)\|_{X}^{p_0} dt \right)^{1/p_0} + N \left(\aint_{t_0-2R}^{t_0} \|f(t,\cdot)\|_{X}^{p_0} dt\right)^{1/p_0}
    \end{align}
    and
    \begin{align*}
        &R^{\vartheta}\left[ \lambda v\right]_{C^{\vartheta}((t_0-R/4,t_0);X)} \nonumber
         \\
         &\leq N \sum_{k=0}^{\infty} 2^{-k\alpha} \left(\aint_{t_{0}-2^kR}^{t_0} \|\lambda u(t,\cdot)\|_{X}^{p_0} dt \right)^{1/p_0} + N \left(\aint_{t_0-2R}^{t_0} \|f(t,\cdot)\|_{X}^{p_0} dt\right)^{1/p_0},
    \end{align*}
    where $X:=L_{q,\omega_2}(\bR^d)$ and $N=N(d,p_0,q,K_0,\Lambda,\nu,\tilde{\alpha},\tilde{\sigma},\vartheta)$ with $\tilde{\alpha}\leq \alpha<1$ and $\tilde{\sigma}$ defined as in \eqref{sigma}.
    Here, $u,v,w$, and $f$ are extended to be zero for $t<0$.
\end{lemma}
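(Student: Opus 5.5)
We follow the proof of Lemma \ref{lem7021436} almost verbatim; only the last step changes, where the $L_{p_1}$ gain is replaced by a H\"older estimate in time. By the same scaling $\tilde u(t,x)=R^{-\alpha}u(Rt,R^{\alpha/\sigma}x)$ (which leaves Assumption \ref{as_nonde} and $[\omega_2]_{A_q}$ unchanged) we reduce to $R=1$.

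\textbf{The part $w$.} With the cutoff $\eta_0$ equal to $1$ on $t\ge t_0-1$ and $0$ on $t\le t_0-2$, let $w\in\bH_{p_0,q,\omega_2,0}^{\alpha,\sigma}(t_0)$ solve $\partial_t^\alpha w=Lw-\lambda w+f\eta_0$. The assumed $p_0$-solvability gives this $w$, and \eqref{eq7181804} gives \eqref{eq7021535} exactly as before.

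\textbf{The part $v$.} Set $v=u-w$ and take the second cutoff $\eta$, equal to $1$ for $t\ge t_0-1/4$ and $0$ for $t\le t_0-1/2$. As in that proof, after spatial mollification and passage to the limit, $\tilde v:=\eta\,\cL v$ satisfies \eqref{eq7161314} on $(t_0-1/2,t_0)$ with forcing $g$. By \eqref{eq7161456},
\begin{equation*}
\|g\|_{\bL_{p_0,q,\omega_2}(t_0-1/2,t_0)}\le N\sum_{k\ge0}2^{-k\alpha}\Bigl(\aint_{t_0-2^k}^{t_0}\|\cL v\|_X^{p_0}dt\Bigr)^{1/p_0}.
\end{equation*}
The $p_0$-estimate then yields
\begin{equation*}
\|\partial^\alpha_{t,t_0-1/2}\tilde v\|_{\bL_{p_0,q,\omega_2}(t_0-1/2,t_0)}\le N\|g\|_{\bL_{p_0,q,\omega_2}(t_0-1/2,t_0)}.
\end{equation*}

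\textbf{The new ingredient.} The previous proof used the Sobolev-type embedding \eqref{eq7151540} to land in $L_{p_1}$. Here we instead need an $X$-valued H\"older embedding: for $h$ with zero initial value at $S$ and $\partial^\alpha_{t,S}h\in L_{p}((S,S+1);X)$, $p$ large, we want
\begin{equation*}
[h]_{C^{\alpha-1/p}((S,S+1);X)}\le N\|\partial_{t,S}^\alpha h\|_{L_p((S,S+1);X)}.
\end{equation*}
This follows from $h=I^\alpha_S\partial^\alpha_{t,S}h$ and the standard estimate $|I^\alpha F(t)-I^\alpha F(s)|$ via H\"older, since the Bochner norm commutes with the scalar kernel. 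The constant stays bounded as $\alpha\to1$ provided $\alpha-1/p\ge\vartheta$ with $\vartheta<\tilde\alpha$.

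\textbf{Main obstacle.} The difficulty is that $p_0$ may be small, so that $\alpha-1/p_0\le\vartheta$. To overcome this we first upgrade the assumed result to a large exponent. Proposition \ref{prop7021543} shows that the assertions of Theorem \ref{thm_main} hold for every $p\in(1,\infty)$ with weight $\omega_2$. We therefore fix $p_2$ with $1/p_2<\tilde\alpha-\vartheta$ and run the argument on $(t_0-1/2,t_0)$ with $p_2$. This requires $g\in\bL_{p_2,q,\omega_2}$, which a direct bound fails to give in terms of $p_0$-averages.

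The remedy is an iteration. Let $\eta_j$, $j=0,1,\dots,m$, be a sequence of nested cutoffs on shrinking intervals between $t_0-1/2$ and $t_0-1/4$. At each step Lemma \ref{lem7021436}'s mechanism ($p\mapsto p_1$ with $1/p_1=1/p-\tilde\alpha/2$, and the commutator $g_j$ bounded in the current exponent by the previous local norm plus tails) raises the integrability of $\cL v$ on the smaller interval. The tails are still controlled by the $p_0$-averages, since the $L_{p}$ averages over dyadic intervals $(t_0-2^k,t_0)$ with $k\ge1$ involve $\cL v$ far away, and these far pieces enter only through $I_2$, which is bounded pointwise by $L_1$ averages. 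After finitely many steps (their number depending only on $\tilde\alpha,p_0,\vartheta$) we reach $p_2$. The final application of the H\"older embedding to $\eta_m\cL v$ gives \eqref{eq7021537}; replacing $\cL v$ by $\lambda v$ throughout gives the last estimate.
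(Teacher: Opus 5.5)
Your proposal is correct and follows the paper's proof. The paper likewise bootstraps the local integrability of $\cL v$ through nested cutoffs, gaining $\tilde\alpha/2$ in $1/p$ at each step, with the near part of the commutator controlled by the previous local norm and the far part by $L_1$ tail averages. It uses \eqref{est_main} at the higher exponents supplied by Proposition \ref{prop7021543} and finishes with the $X$-valued H\"older estimate \eqref{eq7181421} at an exponent $p$ with $\vartheta=\tilde\alpha-1/p$. The one cosmetic difference is that the paper's cutoffs shrink toward $t_0$ and it recovers $(t_0-1/4,t_0)$ by a covering argument, whereas you keep all transitions inside $[t_0-1/2,t_0-1/4]$; this requires a gap between consecutive cutoffs, so that the near part of each commutator sees only the region already controlled.
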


\begin{proof}
As in the proof of Lemma \ref{lem7021436}, we may assume that $R=1$.
Let $w\in \bH_{p_0,q,\omega_2,0}^{\alpha,\sigma}(t_0)$ be a solution to
    \begin{equation*}
        \partial_t^\alpha w = Lw -\lambda w + f\eta_0,
    \end{equation*}
 where $\eta_0 \in C^\infty(\bR)$ is a cutoff function such that $\eta_0=1$ for $t\geq t_0-1$, and $\eta_0=0$ for $t\leq t_0-2$. Then obviously, \eqref{eq7021535} follows from \eqref{eq7181804}.

Now we consider $v:=u-w$. Due to the similarity, we only prove \eqref{eq7021537}.
We claim that for $p\in(1,\infty)$,
 \begin{align} \label{eq7182101}
    &\left(\aint_{t_0-1/4}^{t_0} \|\cL v(t,\cdot)\|_{X}^{p} dt\right)^{1/p} \nonumber
        \\
        &\leq N \left(\aint_{t_{0}-1}^{t_0} \|\cL v(t,\cdot)\|_{X}^{p_0} dt \right)^{1/p_0} + N \sum_{k=0}^{\infty} 2^{-k\alpha} \aint_{t_{0}-2^{k+1}}^{t_0} \|\cL v(t,\cdot)\|_{X} dt.
    \end{align}
    Actually, due to H\"older's inequality, it suffices to deal with the case $p\in(p_0,\infty)$.

For $n\in \bN$, we denote $1/p_n:=1/p_0-n\tilde{\alpha}/2$ as long as the right-hand side is positive. We also consider $\eta_n \in C^\infty(\bR)$, which is a cutoff function such that $\eta_n=1$ for $t\geq t_0-2^{-2n}$, and $\eta_n=0$ for $t\leq t_0-2^{1-2n}$. Let $\tilde{v}_n:=\cL (v\eta_n) =\eta_n \cL v$, which satisfies
\begin{equation} \label{eq7182028}
    \partial_t^\alpha \tilde{v}_n = L\tilde{v}_n - \lambda \tilde{v}_n + g_n,
\end{equation}
where $\partial_t^\alpha \tilde{v}_n = \partial_t I_{t_0-2^{1-2n}}^{1-\alpha} \tilde{v}_n$ and
    \begin{equation*}
        g_n(t,x) := \frac{\alpha}{\Gamma(1-\alpha)} \int_{t_0-2^{1-2n}}^{t} (t-s)^{-\alpha-1} (\eta_n(t)-\eta_n(s)) \cL v(s,x) ds.
    \end{equation*}
    Here, as in \eqref{eq7161314}, we mollify \eqref{eq7182028} to justify this procedure.
Let us repeat the proofs of \eqref{eq7181844} and \eqref{eq7181845}. Using $|\eta_n(t)-\eta_n(s)|\leq N(n)\min\{|t-s|,1\}$, for $t\in(t_0-2^{1-2n},t_0)$,
\begin{align*}
    |g_n(t,x)| &\leq \frac{N(n)}{\Gamma(1-\alpha)}\int_{t-2^{1-2n}}^{t} |t-s|^{-\alpha} |\cL v(s,x)| ds 
    \\
    &\quad+ \frac{N(n)}{\Gamma(1-\alpha)} \int_{-\infty}^{t-2^{1-2n}} |t-s|^{-\alpha-1} |\cL v(s,x)|ds
    \\
    &=: I_1^n + I_2^n.
\end{align*}
For any $p\in(1,\infty)$,
\begin{align*}
    \|I_1^n\|_{\bL_{p,q,\omega_2}(t_0-2^{1-2n},t_0)} &\leq \frac{N}{\Gamma(1-\alpha)} \int_0^{2^{1-2n}} s^{-\alpha} \|\cL v(\cdot-s)\|_{\bL_{p,q,\omega_2}(t_0-2^{1-2n},t_0)} ds \nonumber
    \\
    &\leq N\|\cL v\|_{\bL_{p,q,\omega_2}(t_0-2^{2-2n},t_0)}.
\end{align*}
For $I_2^n$, using
\begin{align*}
    |I_2^n(t,x)| &\leq N \int_{t-1}^{t-2^{1-2n}} |t-s|^{-\alpha-1} |\cL v(s,x)|ds 
    \\
    &\quad+ N\sum_{k=1}^{\infty} \int_{t-2^{k}}^{t-2^{k-1}} |t-s|^{-\alpha-1} |\cL v(s,x)|ds
    \\
    &\leq N(n) \int_{t-1}^{t} |\cL v(s,x)|ds + N(n) \sum_{k=1}^{\infty} 2^{-k(\alpha+1)} \int_{t-2^{k}}^{t-2^{k-1}}|\cL v(s,x)|ds
    \\
    &\leq N(n) \sum_{k=0}^{\infty} 2^{-k(\alpha+1)} \int_{t_0-2^{k+1}}^{t_0}|\cL v(s,x)|ds,
\end{align*}
one can deduce that
\begin{align*}
    \|I_2^n\|_{\bL_{p,q,\omega_2}(t_0-2^{1-2n},t_0)} \leq N(n) \sum_{k=0}^{\infty} 2^{-k\alpha} \aint_{t_{0}-2^{k+1}}^{t_0} \|\cL v(t,\cdot)\|_{L_{q,\omega_2}(\bR^d)} dt.
\end{align*}
Hence, for $p\in(1,\infty)$,
\begin{align} \label{eq7182027}
    \|g_n\|_{\bL_{p,q,\omega_2}(t_0-2^{1-2n},t_0)} &\leq N \left(\aint_{t_{0}-2^{2-2n}}^{t_0} \|\cL v(t,\cdot)\|_{X}^{p} dt \right)^{1/p} \nonumber
        \\
        &\quad+ N \sum_{k=0}^{\infty} 2^{-k\alpha} \aint_{t_{0}-2^{k+1}}^{t_0} \|\cL v(t,\cdot)\|_{X} dt.
\end{align}
    
    Suppose that $p_0<2/\tilde{\alpha}$. Since $u\in \bH_{p_0,q,\omega_2,0}^{\alpha,\sigma}(T)$, it follows from \eqref{eq7182027} with $p=p_0$ that $\|g_1\|_{\bL_{p_0,q,\omega_2}(t_0-1/2,t_0)}<\infty$. Thus, applying \eqref{est_main},
        \begin{align*}
        \|\partial_t^{\alpha}\tilde{v}_1\|_{\bL_{p_0,q,\omega_2}(t_0-1/2,t_0)} &\leq N \|g_1\|_{\bL_{p_0,q,\omega_2}(t_0-1/2,t_0)}
        \\
        &\leq N \left(\aint_{t_{0}-1}^{t_0} \|\cL v(t,\cdot)\|_{X}^{p_0} dt \right)^{1/p_0}
        \\
        &\quad+ N \sum_{k=0}^{\infty} 2^{-k\alpha} \aint_{t_{0}-2^{k+1}}^{t_0} \|\cL v(t,\cdot)\|_{X} dt.
    \end{align*}
    As in \eqref{eq7022034}, this and \eqref{eq7151540} yield \eqref{eq7182101} when $p\in[p_0,p_1]$.
    
    Next, we consider the case when $p_1<2/\tilde{\alpha}$. 
    We notice that \eqref{eq7182101} with $p=p_1$ implies that $\tilde{v}_2 \in \bH_{p_1,q,\omega_2,0}^{\alpha,\sigma}(t_0)$ and $\|g_2\|_{\bL_{p_1,q,\omega_2}(t_0-1/8,t_0)}<\infty$. Thus, one can repeat the above process to obtain
    \begin{align*}
        &\left(\aint_{t_0-1/16}^{t_0} \|\cL v(t,\cdot)\|_{X}^{p_2} dt\right)^{1/p_2} \nonumber
        \\
        &\leq N \left(\aint_{t_{0}-1}^{t_0} \|\cL v(t,\cdot)\|_{X}^{p_0} dt \right)^{1/p_0} + N \sum_{k=0}^{\infty} 2^{-k\alpha} \aint_{t_{0}-2^{k+1}}^{t_0} \|\cL v(t,\cdot)\|_{X} dt.
    \end{align*}
    By a covering argument (see e.g. \cite[Remark 2.15]{XX22}) and H\"older's inequality, we have \eqref{eq7182101} when $p\in[p_0,p_2]$. 
    Actually, due to \eqref{eq6171544}, by iterating this procedure finitely many times until $1/p_n\leq\tilde{\alpha}/2$, \eqref{eq7182101} is proved for $p\in[p_0,p_n]$, including the case $n=0$.
   At this stage, we define $p_{n+1}:=\infty$ and repeat the same argument. Then we have \eqref{eq7182101} for any $p\in(p_0,\infty)$.

    Lastly, we prove \eqref{eq7021537}. Take $p\in(1,\infty)$ so that $\vartheta=\tilde{\alpha}-1/p$.
By \eqref{eq7181421} with $\tilde{v}_2 = \eta_2 \cL v$, \eqref{est_main}, and \eqref{eq7182027},
    \begin{align*}
    \left[ \cL v \right]_{C^{\vartheta}((t_0-1/16,t_0);X)} &\leq N \|\partial_t^\alpha\tilde{v}_2\|_{\bL_{p,q,\omega_2}(t_0-1/4,t_0)}
    \\
    &\leq N \left(\aint_{t_{0}-1/4}^{t_0} \|\cL v(t,\cdot)\|_{X}^{p} dt \right)^{1/p}
        \\
        &\quad+ N \sum_{k=0}^{\infty} 2^{-k\alpha} \aint_{t_{0}-2^{k+1}}^{t_0} \|\cL v(t,\cdot)\|_{X} dt.
\end{align*}
By using \eqref{eq7182101}, $v=u-w$, H\"older's inequality, and \eqref{eq7181804},
\begin{align*}
    \left[\cL v \right]_{C^{\vartheta}((t_0-1/16,t_0);X)} &\leq N \sum_{k=0}^{\infty} 2^{-k\alpha} \left(\aint_{t_{0}-2^k}^{t_0} \|\cL u(t,\cdot)\|_{X}^{p_0} dt \right)^{1/p_0} 
    \\
    &\quad+ N \sum_{k=0}^{\infty} 2^{-k\alpha} \left(\aint_{t_{0}-2^k}^{t_0} \|\cL w(t,\cdot)\|_{X}^{p_0} dt \right)^{1/p_0}
    \\
    &\leq N \sum_{k=0}^{\infty} 2^{-k\alpha} \left(\aint_{t_{0}-2^k}^{t_0} \|\cL u(t,\cdot)\|_{X}^{p_0} dt \right)^{1/p_0} 
    \\
    &\quad+ N \sum_{k=0}^{\infty} 2^{-k(\alpha+1/p_0)} \left(\aint_{t_{0}-2}^{t_0} \|f(t,\cdot)\|_{X}^{p_0} dt \right)^{1/p_0}.
\end{align*}
Thus, again by a covering argument, we get \eqref{eq7021537}.
    The lemma is proved.
\end{proof}

\begin{lemma}
Let the assumptions of Lemma \ref{lem7182203} be satisfied.
    Then for any $t_0\in (-\infty,T)$, $R>0$, $\vartheta\in(0,\tilde{\alpha})$, and $\rho\in(0,1/4)$,
    \begin{align} \label{eq7182250}
        &\aint_{t_0-R}^{t_0} \aint_{t_0-R}^{t_0} \left| \|\cL u(t,\cdot)\|_{X} - \|\cL u(s,\cdot)\|_{X} \right| dtds \nonumber
        \\
        &\leq N\rho^\vartheta \sum_{k=0}^{\infty} 2^{-k\alpha} \left(\aint_{t_{0}-2^k\rho^{-1}R}^{t_0} \|\cL u(t,\cdot)\|_{X}^{p_0} dt \right)^{1/p_0} \nonumber
        \\
        &\quad+ N \rho^{-1} \left( \aint_{t_0-2\rho^{-1}R}^{t_0} \|f(t,\cdot)\|_{X}^{p_0} dt \right)^{1/p_0}
    \end{align}
and
    \begin{align} \label{eq7190032}
         &\aint_{t_0-R}^{t_0} \aint_{t_0-R}^{t_0} \left| \|\lambda u(t,\cdot)\|_{X} - \|\lambda u(s,\cdot)\|_{X} \right| dtds \nonumber
         \\
            &\leq N\rho^\vartheta \sum_{k=0}^{\infty} 2^{-k\alpha} \left(\aint_{t_{0}-2^k\rho^{-1}R}^{t_0} \|\lambda u(t,\cdot)\|_{X}^{p_0} dt \right)^{1/p_0} \nonumber
        \\
        &\quad+ N \rho^{-1} \left( \aint_{t_0-2\rho^{-1}R}^{t_0} \|f(t,\cdot)\|_{X}^{p_0} dt \right)^{1/p_0},
    \end{align}
    where $X:=L_{q,\omega_2}(\bR^d)$ and $N=N(d,p_0,q,K_0,\Lambda,\nu,\tilde{\alpha},\tilde{\sigma},\vartheta)$ with $\tilde{\alpha}\leq\alpha<1$ and $\tilde{\sigma}$ defined as in \eqref{sigma}.
    Here, $u$ and $f$ are extended to be zero for $t<0$.
\end{lemma}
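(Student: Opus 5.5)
We will derive this mean oscillation estimate from the decomposition in Lemma \ref{lem7182203}, applied with the enlarged radius $\rho^{-1}R$ in place of $R$. We treat only \eqref{eq7182250}; \eqref{eq7190032} is identical with $\lambda u$, $\lambda v$, $\lambda w$ in place of $\cL u$, $\cL v$, $\cL w$.

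First we reduce to $t_0>0$. If $t_0\le 0$, then $u\equiv0$ on $(t_0-R,t_0)$ and the left-hand side vanishes. For $t_0\in(0,T)$, Lemma \ref{lem7182203} with radius $\rho^{-1}R$ gives $u=v+w$ on $(0,t_0)$, where $v,w$ are extended by zero for $t<0$. Since $\rho<1/4$, we have $(t_0-R,t_0)\subset(t_0-\rho^{-1}R/4,t_0)$.

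By the reverse triangle inequality,
\begin{equation*}
\bigl|\|\cL u(t)\|_X-\|\cL u(s)\|_X\bigr| \le \|\cL v(t)-\cL v(s)\|_X+\|\cL w(t)\|_X+\|\cL w(s)\|_X .
\end{equation*}
We bound the three terms on the right separately.

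For the $v$-term, both $t,s\in(t_0-R,t_0)$, so $|t-s|<R$. Hence
\begin{equation*}
\|\cL v(t)-\cL v(s)\|_X\le R^\vartheta[\cL v]_{C^\vartheta((t_0-\rho^{-1}R/4,t_0);X)} = \rho^\vartheta(\rho^{-1}R)^\vartheta[\cL v]_{C^\vartheta((t_0-\rho^{-1}R/4,t_0);X)}.
\end{equation*}
By \eqref{eq7021537} with radius $\rho^{-1}R$, this is bounded by $N\rho^\vartheta$ times
\begin{equation*}
\sum_{k}2^{-k\alpha}\Bigl(\aint_{t_0-2^k\rho^{-1}R}^{t_0}\|\cL u\|_X^{p_0}\Bigr)^{1/p_0}+\Bigl(\aint_{t_0-2\rho^{-1}R}^{t_0}\|f\|_X^{p_0}\Bigr)^{1/p_0}.
\end{equation*}
The $f$-part, which carries the factor $\rho^\vartheta\le1$, is absorbed into the final $f$ term.

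For the $w$-terms, we use Hölder's inequality and enlarge the interval:
\begin{equation*}
\aint_{t_0-R}^{t_0}\|\cL w\|_X\le\Bigl(\aint_{t_0-R}^{t_0}\|\cL w\|_X^{p_0}\Bigr)^{1/p_0}\le \rho^{-1/p_0}\Bigl(\aint_{t_0-\rho^{-1}R}^{t_0}\|\cL w\|_X^{p_0}\Bigr)^{1/p_0}.
\end{equation*}
By \eqref{eq7021535} with radius $\rho^{-1}R$, this is at most $N\rho^{-1/p_0}\bigl(\aint_{t_0-2\rho^{-1}R}^{t_0}\|f\|_X^{p_0}\bigr)^{1/p_0}$. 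Since $\rho^{-1/p_0}\le\rho^{-1}$, this gives the stated $f$ term.

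Averaging the pointwise bound over $(t,s)$ and combining the three estimates yields \eqref{eq7182250}. The constant $N$ depends on $\vartheta$ only through Lemma \ref{lem7182203}.

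The main obstacle is applying Lemma \ref{lem7182203} when $t_0$ is near $0$ or when $\rho^{-1}R>t_0$. The lemma states $t_0\in(0,T)$ but arbitrary $R>0$, with functions extended by zero for $t<0$. We take this as covering the case where the interval reaches past $0$, so no separate case analysis is needed beyond $t_0\le0$. We also need the Hölder seminorm estimate to hold on an interval of length $\rho^{-1}R/4$, which contains $(t_0-R,t_0)$; this holds exactly because $\rho<1/4$.
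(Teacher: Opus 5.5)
Your proposal is correct and follows essentially the same route as the paper. In both, Lemma \ref{lem7182203} is applied at scale $\rho^{-1}R$, the $v$-part is controlled by the H\"older seminorm on $(t_0-\rho^{-1}R/4,t_0)\supset(t_0-R,t_0)$, and the $w$-part is controlled by H\"older's inequality together with \eqref{eq7021535}. Your explicit pointwise bound $\bigl|\|\cL u(t)\|_X-\|\cL u(s)\|_X\bigr|\le\|\cL v(t)-\cL v(s)\|_X+\|\cL w(t)\|_X+\|\cL w(s)\|_X$ is the precise form of the paper's ``by the triangle inequality'' step, and your factor $\rho^{-1/p_0}$ is slightly sharper than the paper's $\rho^{-1}$.
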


\begin{proof}
Due to the similarity, we only prove \eqref{eq7182250}. Since $u$ and $f$ are zero for $t<0$, we may assume that $t_0>0$.
    By Lemma \ref{lem7182203}, there are $v,w\in \bH_{p_0,q,\omega_2,0}^{\alpha,\sigma}(t_0)$ satisfying \eqref{eq7021535} and \eqref{eq7021537} with $\rho^{-1}R$ in place of $R$. In particular, for $w$, by \eqref{eq7021535} and H\"older's inequality,
    \begin{align*}
        &\aint_{t_0-R}^{t_0} \aint_{t_0-R}^{t_0} \left| \|\cL w(t,\cdot)\|_{X} - \|\cL w(s,\cdot)\|_{X} \right| dtds
        \\
        &\leq N \aint_{t_0-R}^{t_0} \|\cL w(t,\cdot)\|_{X} dt
        \\
        &\leq N \rho^{-1} \left( \aint_{t_0-\rho^{-1}R}^{t_0} \|\cL w(t,\cdot)\|_{X}^{p_0} dt \right)^{1/p_0}
        \\
        &\leq N \rho^{-1} \left( \aint_{t_0-2\rho^{-1}R}^{t_0} \|f(t,\cdot)\|_{X}^{p_0} dt \right)^{1/p_0}.
    \end{align*}
    For $v$, by \eqref{eq7021537},
    \begin{align*}
        &\aint_{t_0-R}^{t_0} \aint_{t_0-R}^{t_0} \left| \|\cL v(t,\cdot)\|_{X} - \|\cL v(s,\cdot)\|_{X} \right| dtds
        \\
        &\leq R^\vartheta \left[ \cL v \right]_{C^{\vartheta}((t_0-R,t_0);X)}
        \\
        &\leq R^\vartheta \left[ \cL v \right]_{C^{\vartheta}((t_0-\rho^{-1}R/4,t_0);X)}
        \\
         &\leq N\rho^\vartheta \sum_{k=0}^{\infty} 2^{-k\alpha} \left(\aint_{t_{0}-2^k\rho^{-1}R}^{t_0} \|\cL u(t,\cdot)\|_{X}^{p_0} dt \right)^{1/p_0}
        \\
        &\quad+ N \rho^\vartheta \left(\aint_{t_0-2\rho^{-1}R}^{t_0} \|f(t,\cdot)\|_{X}^{p_0} dt\right)^{1/p_0}.
    \end{align*}
    By the triangle inequality, we get \eqref{eq7182250}. The lemma is proved.
\end{proof}

\begin{proof}[Proof of Theorem \ref{thm_time}]

$(i)$ We aim to prove \eqref{est_main}.
    As in Step \textbf{1} of the proof of Proposition \ref{prop7021543}, we may assume that $\cL$ is independent of $t$. Suppose that $u\in \bH_{p,q,\omega,0}^{\alpha,\sigma}(T)$ is a solution to \eqref{eq_main}. Actually, by Lemma \ref{lem_dense}, we further assume that $u\in C_c^\infty([0,T]\times\bR^d)$ and $u(0,x)=0$.
    
By reverse H\"older's inequality (see e.g. \cite[Corollary 7.2.6]{G14}), there is $\gamma=\gamma(p,K_0)>0$ so that $p-\gamma>1$ and $\omega_1\in A_{p-\gamma}(\bR)$. Denote
\begin{equation*}
    p_0:=\frac{p}{p-\gamma}\in (1,\infty).
\end{equation*}
Then we have
\begin{equation} \label{eq7190028}
    \omega_1 \in A_{p/p_0}(\bR) = A_{p-\gamma}(\bR).
\end{equation}
    
    Let $t_0\in (-\infty,T)$. 
    Due to $u\in C_c^\infty([0,T]\times\bR^d)$ and \eqref{eq7292333}, one can verify that $u\in \bH_{p_0,q,\omega_2,0}^{\alpha,\sigma}(T_0)$ for every $T_0<\infty$. Thus, for any $R>0$, $\rho\in(0,1/4)$, and $t_1\in(-\infty,T)$ such that
    \begin{equation*}
        t_0 \in (t_1-R,t_1) \subset (t_1-\rho^{-1}R,t_1),
    \end{equation*}
    it follows from \eqref{eq7182250} that
    \begin{align*}
        &\aint_{t_1-R}^{t_1} \aint_{t_1-R}^{t_1} \left| \|\cL u(t,\cdot)\|_{X} - \|\cL u(s,\cdot)\|_{X} \right| dtds
        \\
        &\leq N\rho^{\vartheta} \sM_{t,p_0}[\cL u](t_0) + N\rho^{-1} \sM_{t,p_0}[f](t_0),
    \end{align*}
    where $X:=L_{q,\omega_2}(\bR^d)$.
    Taking the supremum over all intervals $(t_1-R,t_1)$ containing $t_0$, we obtain the following estimate for the sharp function of $\|\cL u(t,\cdot)\|_{X}$ with respect to $t$:
    \begin{align*}
        \left(\|\cL u(\star,\cdot)\|_{X}\right)^{\sharp}(t_0) &:= \sup_{(t_1-R,t_1)\ni t_0}\aint_{t_1-R}^{t_1} \aint_{t_1-R}^{t_1} \left| \|\cL u(t,\cdot)\|_{X} - \|\cL u(s,\cdot)\|_{X} \right| dtds
        \\
        &\leq N\rho^{\vartheta}  \sM_{t,p_0}[\cL u](t_0) + N\rho^{-1} \sM_{t,p_0}[f](t_0).
    \end{align*}
    By \eqref{eq7190028}, the weighted sharp function theorem, and the weighted maximal function theorem (see \cite[Theorems 2.2 and 2.3]{DK18}),
\begin{align*}
    \|\cL u\|_{\bL_{p,q,\omega}(T)} \leq N\rho^{\vartheta} \|\cL u\|_{\bL_{p,q,\omega}(T)} + N\rho^{-1}\|f\|_{\bL_{p,q,\omega}(T)}.
\end{align*}
Since $\|\cL u\|_{\bL_{p,q,\omega}(T)}<\infty$ by the assumption that $u\in\bH_{p,q,\omega,0}^{\alpha,\sigma}(T)$, by taking a sufficiently small $\rho\in(0,1/4)$ so that $N\rho^\vartheta<1/2$, we see that
\begin{equation*}
    \|\cL u\|_{\bL_{p,q,\omega}(T)} \leq N \|f\|_{\bL_{p,q,\omega}(T)}.
\end{equation*}
Notice that one can also estimate $\lambda u$ by using \eqref{eq7190032} in place of \eqref{eq7182250}. Now $\partial_t^\alpha u$ can be estimated due to \eqref{eq_main}, and thus we have \eqref{est_main}.

$(ii)$ Next, we deal with the solvability. Due to Remark \ref{remCc} and \eqref{est_main}, it suffices to find a solution to \eqref{eq_main} when $f\in C_c^\infty((0,T)\times \bR^d)$. 
By Proposition \ref{prop7021543}, there is a solution $u\in \bH_{p,q,\omega_2,0}^{\alpha,\sigma}(T)$ to \eqref{eq_main}. Let
\begin{equation*}
    \omega^{(n)}(t,x):= \min\{\omega_1(t),n\} \cdot \omega_2(x).
\end{equation*}
Then by \eqref{eq7312315}, $[\omega^{(n)}]_{p,q}\leq N(K_0,p) [\omega]_{p,q}$.
Actually, $u$ is in the space $\bH_{p,q,\omega^{(n)},0}^{\alpha,\sigma}(T)$. Thus, we can use the a priori estimate \eqref{est_main} to obtain that
\begin{equation*}
    \|\partial_t^{\alpha}u\|_{\bL_{p,q,\omega^{(n)}}(T)} + \|\cL u\|_{\bL_{p,q,\omega^{(n)}}(T)} + \lambda\|u\|_{\bL_{p,q,\omega^{(n)}}(T)} \leq N \|f\|_{\bL_{p,q,\omega^{(n)}}(T)}.
\end{equation*}
By the monotone convergence theorem, letting $n\to\infty$ yields \eqref{est_main}. Due to $\|f\|_{\bL_{p,q,\omega}(T)}<\infty$, we obtain $u\in \bH_{p,q,\omega,0}^{\alpha,\sigma}(T)$ if $\lambda>0$.
When $\lambda=0$, one just needs to use \eqref{eq8110024} to conclude that $u\in \bH_{p,q,\omega,0}^{\alpha,\sigma}(T)$.
The theorem is proved.
\end{proof}

\begin{remark} \label{remsame}
    We show that a solution in Theorem \ref{thm_time} is independent of $p$. More precisely, if $\omega_1\in A_{p_1}(\bR)\cap A_{p_2}(\bR)$, and $u\in\bH_{p_1,q,\omega,0}^{\alpha,\sigma}(T)$ and $v\in\bH_{p_2,q,\omega,0}^{\alpha,\sigma}(T)$ are solutions to \eqref{eq_main} with $f \in \bL_{p_1,q,\omega}(T) \cap \bL_{p_2,q,\omega}(T)$, then $u=v$.
    
    Without loss of generality, we assume that $p_1\leq p_2$. Then for every finite $T_0<T$, $\bH_{p_2,q,\omega,0}^{\alpha,\sigma}(T_0) \subset \bH_{p_1,q,\omega,0}^{\alpha,\sigma}(T_0)$. Thus, both $u$ and $v$ are solutions to \eqref{eq_main} in $\bH_{p_1,q,\omega,0}^{\alpha,\sigma}(T_0)$. By the uniqueness result, $u=v$ in $(0,T_0)\times\bR^d$. Letting $T_0\to T$ leads to the desired result.
\end{remark}

\section{Completion of the theory in the odd Sobolev spaces} \label{sec_space}

In this section, we extend the result in the spatial variables. Since this requires taking the spatial norm outside the temporal norm, we prove Theorem \ref{thm_odd} for the odd Sobolev spaces before proving Theorem \ref{thm_main}.

\subsection{Extension of the spatial exponent}
In this subsection, we prove the following analogue of Proposition \ref{prop7021543} in the odd Sobolev spaces.

\begin{proposition} \label{prop7210028}
    Let the parameters in Theorem \ref{thm_odd}, except for $q$ and the weight $\omega$, be given.  Then the assertions of Theorem \ref{thm_odd} hold for every $q\in(1,\infty)$ and every weight of the form $\omega(t,x)=\omega_1(t)$, where $\omega_1\in A_p(\bR)$ satisfies $[\omega_1]_{A_p(\bR)}\leq K_0$.
\end{proposition}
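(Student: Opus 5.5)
The plan is to mirror Subsection \ref{sec_time_level} with the roles of $t$ and $x$ interchanged. We fix $p$ and $\omega_1$ and set $Y:=L_{p,\omega_1}(T)$. We then regard $u$ as the $Y$-valued function $x\mapsto u(\cdot,x)$ on $\bR^d$.

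\textbf{Starting exponent.} By Theorem \ref{thm_time} combined with Remark \ref{rem_pqtime} (i.e.\ \cite[Theorem 2.6]{DL23} with $q=p$, $\omega_2\equiv1$), the assertions of Theorem \ref{thm_main} hold in $\bH^{\alpha,\sigma}_{p,p,\omega_1,0}(T)$. This space coincides with $\bm{\fH}^{\alpha,\sigma}_{p,p,\omega_1,0}(T)$, so Theorem \ref{thm_odd} holds with $q_0:=p$ and weight $\omega_1(t)$.

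\textbf{Spatial analogue of Lemma \ref{lem7021436}.} As there, we reduce to $\cL$ independent of $t$; in fact we take $\cL=-(-\Delta)^{\sigma/2}$ and use Proposition \ref{lem_conti}(ii) afterwards. By scaling we may take $R=1$. Fix $x_0$ and a cutoff $\zeta_0\in C_c^\infty(B_2(x_0))$ with $\zeta_0=1$ on $B_1(x_0)$. Let $w$ solve the equation with right-hand side $f\zeta_0$ in $\bm{\fH}_{q_0,p,\omega_1,0}^{\alpha,\sigma}(T)$; this is possible after reducing to $T<\infty$, as in Lemma \ref{lem_dual}. Set $v:=u-w$, so that $v$ solves the homogeneous equation in $(0,T)\times B_1(x_0)$.

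Localizing $\cL v$ (or a mollification of it) by a spatial cutoff $\zeta$ produces a commutator error $g$ in place of the time commutator. This error contains the nonlocal tail of $L$ acting on $\zeta\cL v$, namely $\int (\zeta(x+y)-\zeta(x)-y^{(\sigma)}\cdot\nabla\zeta)\cL v(t,x+y)K(t,y)\,dy$ plus lower-order terms. It is bounded pointwise in $x$, in $Y$-norm, by a local part and by $\sum_k 2^{-k\sigma}\aint_{B_{2^k}(x_0)}\|\cL v(\cdot,y)\|_Y\,dy$. This gives the analogue of \eqref{eq7161456} with the decay $2^{-k\sigma}$ in place of $2^{-k\alpha}$.

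The gain of integrability is the key point. Applying the estimate at $q_0$ gives $\zeta\cL v$, and hence $(-\Delta)^{\sigma/2}(\zeta \cL v)$, in $L_{q_0}(\bR^d;Y)$. We then need a Sobolev embedding $H^\sigma_{q_0}(\bR^d;Y)\hookrightarrow L_{q_1}(\bR^d;Y)$, i.e.\ $1/q_1=1/q_0-\tilde\sigma/(2d)$, or better to stay uniform as $\sigma\to2$. Since $Y$ is a Banach lattice and Riesz potentials are positive operators, the scalar embedding extends to $Y$-valued functions. The increment $q_1-q_0$ is bounded below uniformly, which gives the analogue of \eqref{eq6171544}.

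\textbf{Level sets and duality.} With the decomposition in hand, we run the level-set argument of Lemma \ref{lem7172353}, using balls in $\bR^d$ and the crawling of ink spots lemma in $\bR^d$. This yields the estimate for $q\in(q_0,q_1)$. Solvability follows by mollifying in $x$ exactly as in Step 1 of the proof of Proposition \ref{prop7021543}. Iterating covers $q>q_0$, and the duality Lemma \ref{lem_dual}(iii) with $\omega_2\equiv1$ covers $q<q_0$. Both sides of the estimate are robust because all constants depend only on $\tilde\alpha,\tilde\sigma$.

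The main obstacle I expect is the commutator step. The time-dependent kernel $K(t,y)$ and the fact that $\cL$ must commute with the cutoff only up to error terms mean that $g$ involves $\cL v$ together with $v$ itself and lower-order derivatives. Handling these in $Y$-valued form, with $p\neq q$ and a general $\omega_1$, may require an additional bound on $\|v\|$ via a Poincaré-type or iteration argument. As a fallback, I would first localize $v$ itself rather than $\cL v$, and control the extra terms by the already available $q_0$-estimate.
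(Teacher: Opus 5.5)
Your overall scheme matches the paper's. You take $q_0=p$ via Theorem \ref{thm_time} and Remark \ref{rem_pqtime}, and split $u=v+w$ with $w$ carrying the localized data. You then work with $h=\cL v$; since $\cL$ is $t$-independent and both $\cL$ and $L(t)$ are translation invariant in $x$, $h$ solves the same equation on all of $\bR^d_T$ with forcing $\cL((1-\zeta)f)$. So no terms in $v$ itself appear, and the $t$-dependence of $K$ is harmless. Higher integrability comes from a $Y$-valued Sobolev embedding built from a positive kernel and Minkowski's inequality, followed by level sets, crawling of ink spots, iteration, and Lemma \ref{lem_dual} for $q<p$.

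The gap is exactly the localization step you flag. The bound you assert there (a local part plus tail averages of $\|\cL v\|_Y$, ``the analogue of \eqref{eq7161456}'') holds only for $\sigma\in(0,1)$. For $\sigma\in[1,2)$ write
\[
\begin{aligned}
L(\zeta h)-\zeta Lh&=\int_{\bR^d}\bigl(\zeta(x+y)-\zeta(x)\bigr)\bigl(h(t,x+y)-h(t,x)\bigr)K(t,y)\,dy\\
&\quad+\int_{\bR^d}\bigl(\zeta(x+y)-\zeta(x)-y^{(\sigma)}\cdot\nabla\zeta(x)\bigr)h(t,x)K(t,y)\,dy .
\end{aligned}
\]
Near $y=0$ the first integral has a kernel of size $\|\nabla\zeta\|_\infty|y|^{1-d-\sigma}$ acting on differences of $h$. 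It is therefore an operator of order $\sigma-1$ in $h$ (logarithmically divergent at $\sigma=1$), and cannot be bounded by $\|h\|_{L_{q_0}(B;Y)}$ plus tails. The time commutator is of order zero because $\alpha<1$; this one instead needs $D_xh$ on a larger ball, which you do not control. So ``applying the estimate at $q_0$'' does not yet give $(-\Delta)^{\sigma/2}(\zeta\cL v)\in L_{q_0}(\bR^d;Y)$ with the right-hand side the level-set argument needs.

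The missing idea is the Caccioppoli-type estimate \eqref{eq7031124}, proved by absorption. Take nested cutoffs $\zeta_k$ with $\zeta_k=1$ on $B_{r_k}$, $r_k=R(1-2^{-k-1})$. Lemma \ref{lem7191752} bounds the commutator by $2^{(\sigma-1)k}R^{1-\sigma}\|D_xh\|_{L_{q_0}(B_{r_{k+3}};Y)}+2^{\sigma k}R^{-\sigma}\|h\|_{L_{q_0}(B_R;Y)}$ plus a weighted-$L_1$ tail. For $\sigma=1$, use \eqref{cancel} and split at $|y|=c2^{-k}R$, trading $\varepsilon^3\|D_xh\|$ against $\varepsilon^{-3}2^kR^{-1}\|h\|$.

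Since $D_xh=D_x(h\zeta_{k+3})$ on $B_{r_{k+3}}$, the $Y$-valued interpolation inequality \eqref{eq7191852} turns the gradient term into $\varepsilon^3\|(-\Delta)^{\sigma/2}(h\zeta_{k+3})\|$ plus $\varepsilon^{3/(1-\sigma)}2^{\sigma k}R^{-\sigma}\|h\|$. Multiply the $k$-th inequality by $\varepsilon^k$ and sum. You can then absorb $\sum_k\varepsilon^{k+3}\|(-\Delta)^{\sigma/2}(h\zeta_{k+3})\|$ into the left-hand side; all sums are finite once $\varepsilon2^{d+\sigma}<1$.

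Your fallback of localizing $v$ instead does not help. It controls $(-\Delta)^{\sigma/2}(\zeta v)$ only by local norms of $v$, which gives neither higher integrability of $\cL v$ nor bounds by averages of $\|\cL u\|_Y$ and $\|f\|_Y$.

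A smaller point: solvability cannot be copied verbatim from Step 1 of Proposition \ref{prop7021543}, because $L_q(\bR^d)\not\subset L_{q_0}(\bR^d)$. There are two fixes:
\begin{enumerate}
\item Start from $f\in C_c^\infty$ and use that spatial mollification maps $L_{q_0}(\bR^d;Y)$ into $L_{q_0}\cap L_\infty(\bR^d;Y)$.
\item As the paper does, take the solution in $\bH^{\alpha,\sigma}_{p,q,\omega_1,0}(T)$ from Theorem \ref{thm_time} and use $\bL_{p,q,\omega_1}(T)\subset\bm{\fL}_{q,p,\omega_1}(T)$ for $q\ge p$ (Minkowski).
\end{enumerate}
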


\begin{remark}
Since
\begin{equation*}
    \bH_{p,p,\omega,0}^{\alpha,\sigma}(T) = \bm{\fH}_{p,p,\omega,0}^{\alpha,\sigma}(T),
\end{equation*}
    the solvability in the odd Sobolev space follows from Remark \ref{rem_pqtime} when $p=q$ and $\omega(t,x)=\omega_1(t)$. Therefore, unlike in Proposition \ref{prop7021543}, there is no need to assume in Proposition \ref{prop7210028} that the assertions of Theorem \ref{thm_odd} hold for some $q=q_0$.
\end{remark}

For a Banach space $Y$ and a weight $\varpi(x):=1/(1+|x|^{d+\sigma})$, we denote
\begin{equation*}
    \|u\|_{L_{1,\varpi}(\bR^d;Y)} := \int_{\bR^d} \frac{\|u(\cdot,x)\|_{Y}}{1+|x|^{d+\sigma}} dx.
\end{equation*}

\begin{lemma} \label{lem7191752}
    Let $p,q\in(1,\infty)$, $T\in(0,\infty]$, $K_0>0$, $[\omega_1]_{A_p(\bR)}\leq K_0$, $R>0$, and $u\in \bm{\fH}_{q,p,\omega_1,0}^{\alpha,\sigma}(T)$. For 
    \begin{equation*}
        r_k:=R(1-2^{-k-1}), \quad k\in \bN_0,
    \end{equation*}
    we take $\zeta_k\in C_c^\infty(B_{r_{k+1}})$ such that $0\leq\zeta_k\leq1$, $\zeta_k=1$ in $B_{r_k}$, and
    \begin{equation} \label{eqzetak}
        |D_x\zeta_k|\leq N\frac{2^k}{R}, \quad |D_x^2\zeta_k|\leq N\frac{2^{2k}}{R^2}.
    \end{equation}
    Then under Assumption \ref{as_upper}, the following estimates hold for
    \begin{equation*}
        I_k:=\|L(\zeta_k u) -\zeta_k Lu\|_{\bm{\fL}_{q,p,\omega_1}(T)} = \|L(\zeta_k u) -\zeta_k Lu\|_{L_q(\bR^d;Y)},
    \end{equation*}
    where $Y:=L_{p,\omega_1}(T)$.

    (i) If $\sigma\in(0,1)$, then
\begin{align} \label{eq7191619}
I_k &\leq N \frac{2^{\sigma k}}{R^\sigma} \|u\|_{L_q(B_R;Y)} + N \frac{2^{(d+\sigma)k}}{R^{d+\sigma-d/q}} (1 + R^{d+\sigma}) \|u\|_{L_{1,\varpi}(\bR^d;Y)},
\end{align}
    
    (ii) If $\sigma\in(1,2)$, then
\begin{align} \label{eq7191715}
I_k &\leq N \frac{2^{(\sigma-1)k}}{R^{\sigma-1}} \|D_xu\|_{L_q(B_{r_{k+3}};Y)} + N \frac{2^{\sigma k}}{R^\sigma} \|u\|_{L_q(B_R;Y)} \nonumber
\\
&\quad+ N \frac{2^{(d+\sigma)k}}{R^{d+\sigma-d/q}} (1 + R^{d+\sigma})  \|u\|_{L_{1,\varpi}(\bR^d;Y)},
\end{align}
    
    (iii) If $\sigma =1$, then for any $\varepsilon \in (0,1)$,
\begin{align} \label{eq7191953}
I_k &\leq N \varepsilon^3 \|D_xu\|_{L_q(B_{r_{k+3}};Y)} + N \varepsilon^{-3} \frac{2^{k}}{R} \|u\|_{L_q(B_R;Y)} \nonumber
\\
&\quad+ N \frac{2^{(d+1)k}}{R^{d+1-d/q}} (1+R^{d+1}) \|u\|_{L_{1,\varpi}(\bR^d;Y)},
\end{align}
where $N=N(d,p,q,K_0,\Lambda,\tilde{\sigma})$ with $\tilde{\sigma}$ defined as in \eqref{sigma}.
\end{lemma}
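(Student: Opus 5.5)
Since $L$ acts only in $x$ and its kernel depends on $t$ only as a parameter, I would first write the commutator pointwise in $t$ and then take the $Y=L_{p,\omega_1}(T)$ norm inside via Minkowski's inequality, so that everything reduces to a vector-valued version of a standard scalar commutator computation. For fixed $(t,x)$,
\begin{equation*}
L(\zeta_k u)-\zeta_k Lu = \int_{\bR^d}\bigl(\zeta_k(x+y)-\zeta_k(x)-y^{(\sigma)}\cdot\nabla\zeta_k(x)\bigr)u(t,x+y)K(t,y)\,dy + \int_{\bR^d} y^{(\sigma)}\cdot\nabla\zeta_k(x)\,\bigl(u(t,x+y)-u(t,x)\bigr)K(t,y)\,dy,
\end{equation*}
where the last term is present only when $\sigma\geq1$. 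The precise splitting is arranged so that each piece has integrable singularity at $y=0$. Since $K(t,y)\le(2-\sigma)\Lambda|y|^{-d-\sigma}$ uniformly in $t$, taking $\|\cdot\|_Y$ inside each $y$-integral gives bounds of the form $\int |\text{coefficient}(x,y)|\,\|u(\cdot,x+y)\|_Y |y|^{-d-\sigma}dy$. Hence all estimates become scalar convolution-type estimates for the function $U(x):=\|u(\cdot,x)\|_Y$ (and, in cases (ii) and (iii), for $\|D_xu(\cdot,x)\|_Y$).

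\textbf{Case split in $|x|$ and $|y|$.} I would split according to whether $x\in B_{r_{k+2}}$ or not, and whether $|y|<\rho_k:=2^{-k-3}R$ or not. For $\sigma\in(0,1)$ I would use the first form without the gradient term.

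For $|y|<\rho_k$, the bound $|\zeta_k(x+y)-\zeta_k(x)|\le N2^k|y|/R$ gives a kernel $2^kR^{-1}|y|^{1-d-\sigma}$. It is supported in $x\in B_{r_{k+2}}$, and Young's inequality yields $N2^k R^{-1}\rho_k^{1-\sigma}\|U\|_{L_q(B_R)}\sim 2^{\sigma k}R^{-\sigma}\|u\|_{L_q(B_R;Y)}$.

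For $|y|\ge\rho_k$ I would bound the difference by $2$. The part with $x+y\in B_R$ gives $\rho_k^{-\sigma}\|U\|_{L_q(B_R)}$. The remaining part has $x\in B_{r_{k+1}}$ (otherwise $\zeta_k(x)=0$), with $x+y$ far away; there I would use $|y|^{-d-\sigma}\le N2^{(d+\sigma)k}R^{-d-\sigma}(1+R^{d+\sigma})\varpi(x+y)$ for $|x|<R$. Integrating over $x\in B_R$ then produces the factor $R^{d/q}$ and the $L_{1,\varpi}$ term.

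\textbf{The cases $\sigma\ge1$.} For $\sigma\in(1,2)$, the first integral uses the second-order Taylor bound $|D^2\zeta_k||y|^2$ for small $y$, giving again $2^{\sigma k}R^{-\sigma}$. For the gradient term with small $y$, I would write $u(x+y)-u(x)=\int_0^1 y\cdot D_xu(x+\theta y)\,d\theta$ to get $2^kR^{-1}\rho_k^{2-\sigma}\|D_xu\|_{L_q(B_{r_{k+3}};Y)}\sim 2^{(\sigma-1)k}R^{1-\sigma}\|D_xu\|$; the support of $\nabla\zeta_k$ ensures $x+\theta y\in B_{r_{k+3}}$. Large $y$ is treated as before, using $|y|\,|\nabla\zeta_k|\lesssim 2^kR^{-1}|y|$, which is integrable against $|y|^{-d-\sigma}$ away from the origin since $\sigma>1$.

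For $\sigma=1$, the cancellation \eqref{cancel} lets me use $y1_{|y|\le r}$ for any truncation radius $r$. I would choose $r=\varepsilon^3\cdot2^{-k}R$ (adjusted as needed). The inner region then gives $\varepsilon^3\|D_xu\|$ (from $|\nabla\zeta_k|\,r$ after the $D_xu$ representation), while the region $r\le|y|\le\rho_k$ gives $\log$-type or $r^{-1}$ factors, bounded by $\varepsilon^{-3}2^kR^{-1}\|u\|_{L_q(B_R;Y)}$.

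\textbf{Main obstacle.} I expect the main difficulty to be this $\sigma=1$ bookkeeping: balancing the truncation radius so that exactly $\varepsilon^3$ and $\varepsilon^{-3}$ appear, and keeping every constant uniform as $\sigma\to1$ or $\sigma\to2$, which is where the factor $(2-\sigma)$ in \eqref{upper} and the dependence on $\tilde\sigma$ enter.
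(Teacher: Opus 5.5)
Your proposal is correct and follows essentially the same route as the paper: Minkowski's inequality to pass to $\|u(\cdot,x)\|_Y$, a split of $|y|$ at scale $\sim 2^{-k}R$ using the Lipschitz and Taylor bounds on $\zeta_k$ and the fundamental theorem of calculus for $u$, and the comparison of $|y|^{-d-\sigma}$ with $\varpi(x+y)$ for the far tail. For $\sigma=1$ you also use the cancellation condition, as the paper does, to move the truncation radius to $\sim\varepsilon^3 2^{-k}R$. Your small-$|y|$ splitting puts the Taylor remainder on $u(x+y)$ instead of $u(x)$, which is a harmless variant.

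One point should be made explicit for $\sigma\in(1,2)$. On $|y|\ge\rho_k$ you must recombine the two pieces of your identity into $(\zeta_k(x+y)-\zeta_k(x))u(x+y)-y\cdot\nabla\zeta_k(x)u(x)$. A stand-alone term $|y|\,|\nabla\zeta_k(x)|\,\|u(\cdot,x+y)\|_Y$ only decays like $|y|^{1-d-\sigma}$, so it is not controlled by $\|u\|_{L_{1,\varpi}(\bR^d;Y)}$.
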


\begin{proof}
Let $\Tilde{r}_k = r_{k+3} - r_{k+2}=2^{-k-4}R$. Due to \eqref{eq_op},
\begin{align} \label{eq7191056}
&L(\zeta_k u)(t,x) - \zeta_k Lu(t,x) \nonumber
\\
&= \int_{\bR^d} \left((\zeta_k(x+y) - \zeta_k(x))u(t,x+y) - y^{(\sigma)}\cdot \nabla\zeta_k(x) u(t,x)\right) K(t,y) dy.
\end{align}

$(i)$
Let $\sigma\in(0,1)$. In this case, $y^{(\sigma)}=0$ in \eqref{eq7191056}, which leads to
\begin{align} \label{eq7191620}
&|L(\zeta_k u)(t,x) - \zeta_k Lu(t,x)| \nonumber
\\
&\leq N \left(\int_{B_{\tilde{r}_k}} + \int_{B^c_{\tilde{r}_k}} \right) |\zeta_k(x+y) - \zeta_k(x)||u(t,x+y) ||y|^{-d-\sigma} dy=: I^1_{k1} +  I^1_{k2}.
\end{align}

Let us consider $I^1_{k1}$. By \eqref{eqzetak}, for $y\in B_{\tilde{r}_k}$,
\begin{align} \label{eq7191636}
    |\zeta_k(x+y) - \zeta_k(x)|
\leq N \frac{2^k}{R}|y|1_{|x|<r_{{k+2}}}.
\end{align}
Moreover, $I_{k1}^1=0$ for $|x|>r_{k+2}$.
Hence, the Minkowski inequality yields that
\begin{align} \label{eq7191621}
\|I_{k1}^1\|_{L_q(\bR^d;Y)} &\leq N \frac{2^k}{R} \int_{B_{\tilde{r}_k}} \|u(\cdot,\cdot+y)\|_{L_q(B_{r_{k+2}};Y)} |y|^{1-d-\sigma} dy \nonumber
\\
&\leq N\frac{2^k}{R}  \|u\|_{L_q(B_{R};Y)} \int_{B_{\tilde{r}_k}} |y|^{1-d-\sigma} dy \nonumber
\\
&\leq N (1-\sigma)^{-1}\frac{2^k}{R}  \Tilde{r}_k^{1-\sigma} \|u\|_{L_q(B_{R};Y)} \leq N(\sigma_1) \frac{2^{\sigma k}}{R^{\sigma}} \|u\|_{L_q(B_{R};Y)}.
\end{align}

For $I_{k2}^1$,
\begin{align*}
    I^1_{k2} \leq (2-\sigma) \int_{B^c_{\tilde{r}_k}} \bigg(1_{|x+ y|< r_{{k+1}}}+ 1_{|x|< r_{{k+1}}}\bigg) |u(t,x+y)| |y|^{-d-\sigma} dy:= I^1_{k21} + I^1_{k22}.
\end{align*}
For the first component of the last term, by the Minkowski inequality,
\begin{align} \label{eq7191623}
\|I_{k21}^1\|_{L_q(\bR^d;Y)} &\leq N \|u\|_{L_q(B_{R};Y)}\int_{B^c_{\tilde{r}_k}} |y|^{-d-\sigma} dy \nonumber
\\
&\leq N(d,\sigma_0) \tilde{r}_k^{-\sigma} \|u\|_{L_q(B_{R};Y)} \leq N \frac{2^{\sigma k}}{R^{\sigma}} \|u\|_{L_q(B_{R};Y)}. 
\end{align}
Next, we consider $I_{k22}^1$. For $x\in B_{r_{k+1}}$ and $y\in B^c_{\tilde{r}_k}$,
\begin{equation*}
    \frac{1 + |x+y|^{d+ \sigma}}{|y|^{d+ \sigma}} \leq \frac{1+ (r_{k+1}+ \tilde{r}_k)^{d+\sigma}}{\tilde{r}_k^{d+\sigma}} \leq \frac{1 + r_{k+3}^{d+\sigma}}{\tilde{r}_k^{d+\sigma}}.
\end{equation*}
Thus, for $x\in B_{r_{k+1}}$,
\begin{align*} 
    \|I_{k22}^1(\cdot,x)\|_{Y} \leq N\int_{B^c_{\tilde{r}_k}} \|u(\cdot,x+y)\|_{Y} |y|^{-d-\sigma} dy \leq N \frac{1 + r_{k+3}^{d+\sigma}}{\tilde{r}_k^{d+\sigma}} \|u\|_{L_{1,\varpi}(\bR^d;Y)}.
\end{align*}
Hence, by H\"older's inequality and the Minkowski inequality,
\begin{align} \label{eq7191624}
\|I_{k22}^1\|_{L_q(\bR^d;Y)} &\leq N r^{d/q}_{k+1} \|I_{k22}^1\|_{L_{\infty}(B_{r_{k+1}};Y)} \nonumber
\\
&\leq N \frac{2^{(d+\sigma)k}}{R^{d+\sigma-d/q}}(1 + R^{d+\sigma}) \|u\|_{L_{1,\varpi}(\bR^d;Y)}.
\end{align}
Now \eqref{eq7191619} follows from \eqref{eq7191620}, \eqref{eq7191621}, \eqref{eq7191623}, and \eqref{eq7191624}.

$(ii)$
Let $\sigma\in(1,2)$.
In this case, by \eqref{eq7191056},
\begin{align*}
&|L(\zeta_k u)(t,x) - \zeta_k Lu(t,x)|
\\
&\leq N (2-\sigma) \int_{B_{\tilde{r}_k}}|\zeta_k(x+y) - \zeta_k(x)|
|u(t,x+y) - u(t,x)| |y|^{-d-\sigma} dy
\\
&\quad+ N (2-\sigma) \int_{B_{\tilde{r}_k}}|\zeta_k(x+y) - \zeta_k(x)- y\cdot \nabla\zeta_k(x)| |u(t,x)||y|^{-d-\sigma} dy
\\
&\quad+ N (2-\sigma) \int_{B_{\tilde{r}_k}^c}|\zeta_k(x+y) - \zeta_k(x)| |u(t,x+y)| |y|^{-d-\sigma} dy
\\
&\quad+ N (2-\sigma) \int_{B_{\tilde{r}_k}^c}|y \cdot \nabla \zeta_k(x)||u(t,x)| |y|^{-d-\sigma} dy
\\
&=: I^2_{k1} +  I^2_{k2} + I^2_{k3} + I^2_{k4}.
\end{align*}

For $I^2_{k1}$, by \eqref{eq7191636} and the fundamental theorem of calculus,
\begin{align*}
I^2_{k1} &\leq N(2-\sigma) \frac{2^k}{R}1_{|x|<r_{k+2}} \int_{B_{\tilde{r}_k}} |u(t,x+y) - u(t,x)||y|^{1-d-\sigma} dy
\\
&\leq N(2-\sigma) \frac{2^k}{R}1_{|x|<r_{k+2}} \int_{B_{\tilde{r}_k}} \int_0^1  |\nabla u(t,x + sy)| |y|^{2-d-\sigma} dsdy.
\end{align*}
Thus, as in \eqref{eq7191621},
\begin{align} \label{eq7191710}
\|I_{k1}^2\|_{L_q(\bR^d;Y)} &\leq N \frac{2^{k}}{R} \|D_xu\|_{L_q(B_{r_{k+3}};Y)} (2-\sigma) \int_{B_{\tilde{r}_k}} |y|^{2-d-\sigma} dy \nonumber
\\
&= N \frac{2^{(\sigma-1) k}}{R^{\sigma-1}} \|D_xu\|_{L_q(B_{r_{k+3}};Y)}.
\end{align}

Now we handle $I^2_{k2}$. As in \eqref{eq7191636}, due to \eqref{eqzetak} and the mean value theorem, for $y \in B_{\tilde{r}_k}$,
\begin{align*}
&|\zeta_k(x+y) - \zeta_k(x)- y\cdot \nabla\zeta_k(x)| \leq N \frac{2^{2k}}{R^2}|y|^2 1_{|x|<r_{{k+2}}}.
\end{align*}
Thus, we see that
\begin{align*}
I^2_{k2} &\leq N (2-\sigma)\frac{2^{2k}}{R^2} |u(t,x)| 1_{|x|<r_{{k+2}}} \int_{B_{\tilde{r}_k}} |y|^{2-d-\sigma}  dy \leq N \frac{2^{\sigma k}}{R^\sigma} |u(t,x)| 1_{|x|<r_{{k+2}}},
\end{align*}
which yields that
\begin{align} \label{eq7191711}
\|I_{k2}^2\|_{L_q(\bR^d;Y)} &\leq N \frac{2^{\sigma k}}{R^\sigma} \|u\|_{L_q(B_R;Y)}.
\end{align}

Next, we notice that $I^2_{k3} = I^1_{k2}$. Thus,
\begin{align} \label{eq7191712}
    \|I_{k3}^2\|_{L_q(\bR^d;Y)} \leq N \frac{2^{\sigma k}}{R^\sigma} \|u\|_{L_q(B_R;Y)} + N \frac{2^{(d+\sigma)k}}{R^{d+\sigma-d/q}} (1 + R^{d+\sigma}) \|u\|_{L_{1,\varpi}(\bR^d;Y)}.
\end{align}

Lastly,
\begin{align*}
    I^2_{k4} \leq N(2-\sigma) \frac{2^k}{R}|u(t,x)|1_{|x|<r_{k+1}} \int_{B_{\tilde{r}_k}^c} |y|^{1-d-\sigma} dy \leq N\frac{2^{\sigma k}}{R^\sigma}|u(t,x)|1_{|x|<r_{k+1}},
\end{align*}
which yields
\begin{align} \label{eq7191713}
    \|I_{k4}^2\|_{L_q(\bR^d;Y)} \leq N \frac{2^{\sigma k}}{R^\sigma} \|u\|_{L_q(B_R;Y)}.
\end{align}
Combining \eqref{eq7191710}, \eqref{eq7191711}, \eqref{eq7191712}, and \eqref{eq7191713} leads to \eqref{eq7191715}.

$(iii)$ We consider the case when $\sigma=1$. Let $c\in(0,1)$, which will be determined below, and denote $\delta_k:=c\tilde{r}_k$.
Due to \eqref{cancel} and \eqref{eq7191056},
\begin{align*}
&|L(\zeta_k u)(t,x) - \zeta_k Lu(t,x)|
\\
&\leq N \int_{B_{\delta_k}}|\zeta_k(x+y) - \zeta_k(x)|
|u(t,x+y) - u(t,x)| |y|^{-d-1} dy
\\
&\quad+ N \int_{B_{\delta_k}}|\zeta_k(x+y) - \zeta_k(x)- y\cdot \nabla\zeta_k(x)| |u(t,x)||y|^{-d-1} dy
\\
&\quad+ N \int_{\delta_k\leq |y|<\tilde{r}_k}|\zeta_k(x+y) - \zeta_k(x)| |u(t,x+y)| |y|^{-d-1} dy
\\
&\quad+ N \int_{B_{\tilde{r}_k}^c}|\zeta_k(x+y) - \zeta_k(x)| |u(t,x+y)| |y|^{-d-1} dy
\\
&=: I^3_{k1} +  I^3_{k2} + I^3_{k3} + I^3_{k4}.
\end{align*}
As in \eqref{eq7191710}, \eqref{eq7191711}, and \eqref{eq7191712},
\begin{align*}
    \|I_{k1}^3\|_{L_q(\bR^d;Y)} \leq N c\|D_xu\|_{L_q(B_{r_{k+3}};Y)}, \quad \|I_{k2}^3\|_{L_q(\bR^d;Y)} \leq N c\frac{2^k}{R}\|u\|_{L_q(B_{R};Y)}, 
\end{align*}
and
\begin{align*}
    \|I_{k4}^3\|_{L_q(\bR^d;Y)} \leq N \frac{2^{k}}{R} \|u\|_{L_q(B_R;Y)} + N \frac{2^{(d+1)k}}{R^{d+1-d/q}} (1 + R^{d+1}) \|u\|_{L_{1,\varpi}(\bR^d;Y)}.
\end{align*}
Thus, it remains to estimate $I^3_{k3}$. By \eqref{eq7191636},
\begin{align*}
    \|I_{k3}^3\|_{L_q(\bR^d;Y)} &\leq N\frac{2^k}{R} \int_{\delta_k\leq |y|<\tilde{r}_k} \|u(\cdot,\cdot+y)\|_{L_q(B_{r_{k+2}};Y)}|y|^{-d} dy
    \\
    &\leq N \frac{2^k}{R} \|u\|_{L_q(B_{R};Y)} \int_{\delta_k\leq |y|<\tilde{r}_k} |y|^{-d} dy 
    \\
    &\leq N \log(c^{-1}) \frac{2^k}{R} \|u\|_{L_q(B_{R};Y)} \leq N c^{-1} \frac{2^k}{R} \|u\|_{L_q(B_{R};Y)}.
\end{align*}
By choosing $c=\varepsilon^3/N$, we get the desired estimate.
The lemma is proved.
\end{proof}

\begin{lemma}
    Let the parameters in Theorem \ref{thm_odd}, except for $q$ and the weight $\omega_2$, be given. Suppose that, for some $q_0\in(1,\infty)$, the assertions of Theorem \ref{thm_odd} hold with $q=q_0$ and $\omega(t,x)=\omega_1(t)$.
Assume that $u\in \bm{\fH}_{q_0,p,\omega_1,0}^{\alpha,\sigma}(T)$ is a solution to \eqref{eqmainodd}.
Let $R>0$, and $\zeta_0\in C_c^\infty(B_R)$ be a cutoff function such that 
    \begin{equation} \label{cutoff}
        \zeta_0=1 \text{ in } B_{R/2}, \quad |D_x\zeta_0|\leq NR^{-1}, \quad |D_x^2\zeta_0|\leq NR^{-2}.
    \end{equation}
  Then we have
    \begin{align} \label{eq7031124}
        &\|\partial_t^\alpha (u\zeta_0)\|_{\bm{\fL}_{q_0,p,\omega_1}(T)} + \|\cL(u\zeta_0)\|_{\bm{\fL}_{q_0,p,\omega_1}(T)} + \lambda\|u\zeta_0\|_{\bm{\fL}_{q_0,p,\omega_1}(T)} \nonumber
        \\
        &\leq N\|f\|_{L_{q_0}(B_R;Y)} + NR^{-\sigma} \|u\|_{L_{q_0}(B_R;Y)} + N \frac{1+R^{d+\sigma}}{R^{d+\sigma-d/q_0}} \|u\|_{L_{1,\varpi}(\bR^d;Y)},
    \end{align}
    where $Y:=L_{p,\omega_1}(T)$ and $N=N(d,p,q_0,K_0,\Lambda,\nu,\tilde{\alpha},\tilde{\sigma})$ with $\tilde{\sigma}$ defined as in \eqref{sigma}.
\end{lemma}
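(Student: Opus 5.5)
Since the cutoff $\zeta_0$ depends only on $x$, $u\zeta_0$ has zero initial value, and $\partial_t^\alpha(u\zeta_0)=\zeta_0\partial_t^\alpha u$. Hence $u\zeta_0\in \bm{\fH}_{q_0,p,\omega_1,0}^{\alpha,\sigma}(T)$ and it solves
\begin{equation*}
\partial_t^\alpha(u\zeta_0)=L(u\zeta_0)-\lambda u\zeta_0+\zeta_0 f+\bigl(\zeta_0 Lu-L(\zeta_0u)\bigr).
\end{equation*}
Applying the assumed estimate \eqref{est_odd} with $q=q_0$ and $\omega=\omega_1$ then bounds the left-hand side of \eqref{eq7031124} by $N\|\zeta_0 f\|_{\bm{\fL}_{q_0,p,\omega_1}(T)}+N I_0$, where $I_0=\|L(\zeta_0u)-\zeta_0Lu\|_{L_{q_0}(\bR^d;Y)}$ is the commutator of Lemma \ref{lem7191752} (with $R$ replaced by a suitable fixed multiple, so that $\zeta_0$ plays the role of some $\zeta_k$). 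The term $\|\zeta_0 f\|$ is at most $\|f\|_{L_{q_0}(B_R;Y)}$. For $\sigma\in(0,1)$, Lemma \ref{lem7191752}(i) with $k=0$ gives exactly the two remaining terms of \eqref{eq7031124}, and the proof is finished.

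For $\sigma\in[1,2)$, parts (ii) and (iii) contain the extra term $\|D_xu\|_{L_{q_0}(B_{r_{k+3}};Y)}$, which is not on the right-hand side of \eqref{eq7031124}. I expect this to be the main obstacle, and I would handle it by a standard iteration over the cutoffs $\zeta_k$ with radii $r_k\uparrow R$. Applying the argument above to $\zeta_k u$ gives
\begin{equation*}
A_k:=\|(-\Delta)^{\sigma/2}(\zeta_ku)\|_{L_{q_0}(\bR^d;Y)}\le N\|f\|_{L_{q_0}(B_R;Y)}+N I_k .
\end{equation*}
Next we control the gradient on $B_{r_{k+3}}$. Since $\zeta_{k+3}=1$ there, $\|D_xu\|_{L_{q_0}(B_{r_{k+3}};Y)}\le\|D_x(\zeta_{k+3}u)\|$. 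By interpolation, after extrapolating to the odd space as in \eqref{eq8102022} and Proposition \ref{lem_conti}, we have for $\sigma>1$
\begin{equation*}
\|D_xv\|\le \varepsilon\|(-\Delta)^{\sigma/2}v\|+N\varepsilon^{-1/(\sigma-1)}\|v\|,
\end{equation*}
and for $\sigma=1$ the gradient is directly bounded by $A_{k+3}$ via Proposition \ref{lem_conti}.

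Substituting the commutator bounds from (ii) and (iii) yields a recursion of the form
\begin{equation*}
A_k\le \varepsilon_0 A_{k+3}+N2^{ck}\Bigl(\|f\|_{L_{q_0}(B_R;Y)}+R^{-\sigma}\|u\|_{L_{q_0}(B_R;Y)}+\tfrac{1+R^{d+\sigma}}{R^{d+\sigma-d/q_0}}\|u\|_{L_{1,\varpi}(\bR^d;Y)}\Bigr).
\end{equation*}
Here $\varepsilon_0$ can be taken small: for $\sigma>1$ choose the interpolation $\varepsilon$ proportional to $2^{-(\sigma-1)k}R^{\sigma-1}$ times a small constant, and for $\sigma=1$ use the $\varepsilon^3$ in (iii). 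The price is a geometric factor $2^{ck}$ in front of the lower-order terms. Multiplying the recursion by $\varepsilon_0^{k/3}$ (more precisely, by $\varepsilon_0^{j}$ along $k=3j$) and summing over $k$ shows that $\sum_j\varepsilon_0^{j}A_{3j}$ is controlled, provided $\varepsilon_0 2^{3c}<1$. The $A$-terms on the right are finite because $u\in\bm{\fH}_{q_0,p,\omega_1}^{\alpha,\sigma}$, so they can be absorbed into the left, which leaves $A_0$ bounded by the right-hand side of \eqref{eq7031124}.

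Finally, $\cL(u\zeta_0)$ and $\lambda u\zeta_0$ follow from \eqref{est_odd} once $I_0$ is bounded, and $\partial_t^\alpha(u\zeta_0)$ then follows from the equation. One check is needed: the constants in the interpolation inequality and the choice of $\varepsilon_0$ must remain uniform in $\tilde\sigma$ as $\sigma\to2$. The factor $(2-\sigma)$ in Assumption \ref{as_upper} is what keeps the integrals $\int|y|^{2-d-\sigma}$ bounded, so this should go through.
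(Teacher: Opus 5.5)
Your proposal is correct and is essentially the paper's proof. Like the paper, you localize with the cutoffs $\zeta_k$ of Lemma~\ref{lem7191752}, apply the assumed $q_0$-estimate to $u\zeta_k$, take $k=0$ when $\sigma\in(0,1)$, and for $\sigma\in[1,2)$ control $\|D_xu\|_{L_{q_0}(B_{r_{k+3}};Y)}$ through $u\zeta_{k+3}$ via \eqref{eq7191852} (or Proposition~\ref{lem_conti} when $\sigma=1$), then sum with geometric weights and absorb. The paper weights every $k$ by $\varepsilon^k$ rather than summing along $k=3j$, which is immaterial.

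Two small points. First, the absorption needs the weighted series $\sum_j\varepsilon_0^{j}A_{3j}$ to converge. This does not follow from $u\in\bm{\fH}$ alone; it follows from the crude bound $A_k\le N2^{(d+\sigma)k}C(u,f)$ given by the same commutator estimate, which the paper states explicitly. Second, an arbitrary admissible $\zeta_0$ need not be a member of the family $\{\zeta_k\}$ at any scale, since $r_{k+1}/r_k\le 3/2$. So instead of rescaling $R$, simply take $\zeta_0$ to be the $k=0$ member, as the paper implicitly does.
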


\begin{proof}
By Proposition \ref{lem_conti}, it suffices to obtain \eqref{eq7031124} with $(-\Delta)^{\sigma/2}$ in place of $\cL$. 
   To prove the result, we repeat the proof of \cite[Lemma 3.2]{DL23}. Let $\zeta_k$ be cutoff functions introduced in Lemma \ref{lem7191752}. Then one can see that $u\zeta_k \in \bm{\fH}_{q_0,p,\omega_1,0}^{\alpha,\sigma}(T)$ satisfies
   \begin{equation*}
       \partial_t^\alpha(u\zeta_k) = L(u\zeta_k)-\lambda u\zeta_k +f\zeta_k +\zeta_kLu-L(u\zeta_k).
   \end{equation*}
   By \eqref{est_odd},
   \begin{align} \label{eq7191810}
       &\|\partial_t^\alpha (u\zeta_k)\|_{\bm{\fL}_{q_0,p,\omega_1}(T)} + \|(-\Delta)^{\sigma/2}(u\zeta_k)\|_{\bm{\fL}_{q_0,p,\omega_1}(T)} + \lambda\|u\zeta_k\|_{\bm{\fL}_{q_0,p,\omega_1}(T)} \nonumber
       \\
       &\leq N \|f\zeta_k\|_{\bm{\fL}_{q_0,p,\omega_1}(T)} + N \|L(\zeta_k u) -\zeta_k Lu\|_{\bm{\fL}_{q_0,p,\omega_1}(T)} \nonumber
       \\
       &\leq N\|f\|_{L_{q_0}(B_R;Y)} + N \|L(\zeta_k u) -\zeta_k Lu\|_{\bm{\fL}_{q_0,p,\omega_1}(T)}.
   \end{align}
   To estimate the last term above, we use Lemma \ref{lem7191752} by considering the three cases.

   First, we handle $\sigma\in(0,1)$. In this case, applying both \eqref{eq7191619} and \eqref{eq7191810} with $k=0$, we obtain \eqref{eq7031124}.

   Next, we deal with $\sigma\in(1,2)$. By \eqref{eq7191715} and \eqref{eq7191810},
   \begin{align} \label{eq7191948}
       &\|\partial_t^\alpha (u\zeta_k)\|_{\bm{\fL}_{q_0,p,\omega_1}(T)} + \|(-\Delta)^{\sigma/2}(u\zeta_k)\|_{\bm{\fL}_{q_0,p,\omega_1}(T)} + \lambda\|u\zeta_k\|_{\bm{\fL}_{q_0,p,\omega_1}(T)} \nonumber
       \\
       &\leq N\|f\|_{L_{q_0}(B_R;Y)} + N \frac{2^{(\sigma-1)k}}{R^{\sigma-1}} \|D_xu\|_{L_{q_0}(B_{r_{k+3}};Y)} + N \frac{2^{\sigma k}}{R^\sigma} \|u\|_{L_{q_0}(B_R;Y)} \nonumber
\\
&\quad+ N \frac{2^{(d+\sigma)k}}{R^{d+\sigma-d/q_0}} (1 + R^{d+\sigma})  \|u\|_{L_{1,\varpi}(\bR^d;Y)}.
   \end{align}
   Since $D_xu=D_x(u\zeta_{k+3})$ in $B_{r_{k+3}}$, by using the interpolation inequality \eqref{eq7191852}, for any $\varepsilon>0$,
   \begin{align*}
       &\|\partial_t^\alpha (u\zeta_k)\|_{\bm{\fL}_{q_0,p,\omega_1}(T)} + \|(-\Delta)^{\sigma/2}(u\zeta_k)\|_{\bm{\fL}_{q_0,p,\omega_1}(T)} + \lambda\|u\zeta_k\|_{\bm{\fL}_{q_0,p,\omega_1}(T)}
       \\
       &\leq N\|f\|_{L_{q_0}(B_R;Y)} + \varepsilon^3 \|(-\Delta)^{\sigma/2}(u\zeta_{k+3})\|_{\bm{\fL}_{q_0,p,\omega_1}(T)} + N\frac{2^{\sigma k}}{R^\sigma} \varepsilon^{3/(1-\sigma)} \|u\|_{L_{q_0}(B_R;Y)} 
       \\
       &\quad + N \frac{2^{(d+\sigma)k}}{R^{d+\sigma-d/q_0}} (1 + R^{d+\sigma}) \|u\|_{L_{1,\varpi}(\bR^d;Y)}.
   \end{align*}
   Let us multiply both sides of this inequality by $\varepsilon^k$, and take the sum over $k$. Then we have
   \begin{align} \label{eq7191949}
       &\sum_{k=0}^\infty \varepsilon^k \left( \|\partial_t^\alpha (u\zeta_k)\|_{\bm{\fL}_{q_0,p,\omega_1}(T)} + \|(-\Delta)^{\sigma/2}(u\zeta_k)\|_{\bm{\fL}_{q_0,p,\omega_1}(T)} + \lambda\|u\zeta_k\|_{\bm{\fL}_{q_0,p,\omega_1}(T)} \right) \nonumber
       \\
       &\leq N\|f\|_{L_{q_0}(B_R;Y)} \sum_{k=0}^\infty \varepsilon^k + \sum_{k=0}^\infty \varepsilon^{k+3} \|(-\Delta)^{\sigma/2}(u\zeta_{k+3})\|_{\bm{\fL}_{q_0,p,\omega_1}(T)} \nonumber
       \\
       &\quad+ N\frac{\varepsilon^{3/(1-\sigma)}}{R^\sigma}  \|u\|_{L_{q_0}(B_R;Y)}  \sum_{k=0}^\infty (\varepsilon2^{\sigma})^k + N \frac{1 + R^{d+\sigma}}{R^{d+\sigma-d/q_0}} \|u\|_{L_{1,\varpi}(\bR^d;Y)}\sum_{k=0}^\infty (\varepsilon2^{d+\sigma})^k.
   \end{align}
   Similarly, by multiplying both sides of \eqref{eq7191948} by $\varepsilon^k$ and summing in $k$, one can deduce that for $\varepsilon2^{d+\sigma}<1$,
   \begin{equation*}
       \sum_{k=0}^\infty \varepsilon^{k} \|(-\Delta)^{\sigma/2}(u\zeta_{k})\|_{\bm{\fL}_{q_0,p,\omega_1}(T)} <\infty.
   \end{equation*}
   Hence, we can absorb the second term on the right-hand side of \eqref{eq7191949} to get \eqref{eq7031124}.

   Lastly, we consider $\sigma=1$. By Proposition \ref{lem_conti}, \eqref{eq7191953}, and \eqref{eq7191810},
   \begin{align*}
    &\|\partial_t^\alpha (u\zeta_k)\|_{\bm{\fL}_{q_0,p,\omega_1}(T)} + \|(-\Delta)^{1/2}(u\zeta_k)\|_{\bm{\fL}_{q_0,p,\omega_1}(T)} + \lambda\|u\zeta_k\|_{\bm{\fL}_{q_0,p,\omega_1}(T)}
       \\
       &\leq N\|f\|_{L_{q_0}(B_R;Y)} + N\varepsilon^3 \|D_xu\|_{L_{q_0}(B_{r_{k+3}};Y)} + N \varepsilon^{-3}\frac{2^{k}}{R} \|u\|_{L_{q_0}(B_R;Y)}
\\
&\quad+ N \frac{2^{(d+1)k}}{R^{d+1-d/q_0}} (1 + R^{d+1})  \|u\|_{L_{1,\varpi}(\bR^d;Y)}
\\
&\leq N\|f\|_{L_{q_0}(B_R;Y)} + \varepsilon^3 \|(-\Delta)^{1/2}(u\zeta_{k+3})\|_{\bm{\fL}_{q_0,p,\omega_1}(T)} + N \varepsilon^{-3}\frac{2^{k}}{R} \|u\|_{L_{q_0}(B_R;Y)}
\\
&\quad+ N \frac{2^{(d+1)k}}{R^{d+1-d/q_0}} (1 + R^{d+1})  \|u\|_{L_{1,\varpi}(\bR^d;Y)}.
   \end{align*}
  Then \eqref{eq7031124} can be obtained by repeating the case when $\sigma\in(1,2)$. The lemma is proved.
\end{proof}

\begin{lemma} \label{lem7202344}
    Let the parameters in Theorem \ref{thm_odd}, except for $q$ and the weight $\omega_2$, be given.
    Assume further that $T<\infty$ and that $\cL$ is independent of $t$.
    Suppose that, for some $q_0\in(1,\infty)$, the assertions of Theorem \ref{thm_odd} hold with $q=q_0$ and $\omega(t,x)=\omega_1(t)$.
Assume that $u\in \bm{\fH}_{q_0,p,\omega_1,0}^{\alpha,\sigma}(T)$ is a solution to \eqref{eqmainodd}.
Then there exists $q_1=q_1(\tilde{\sigma},q_0)\in (q_0,\infty]$ such that
\begin{equation} \label{eq7202135}
    q_1-q_0 > \delta(\tilde{\sigma},d)>0,
\end{equation}
and the following holds.
For any $R>0$ and $x_0\in \bR^d$, there exist $v,w\in \bm{\fH}_{q_0,p,\omega_1,0}^{\alpha,\sigma}(T)$ such that $u=v+w$,
    \begin{align} \label{eq7031544}
    &\left(\aint_{B_{R/2}(x_0)} \|\cL w(\cdot,x)\|_{Y}^{q_0} dx\right)^{1/q_0} + \left(\aint_{B_{R/2}(x_0)} \|\lambda w(\cdot,x)\|_{Y}^{q_0} dx\right)^{1/q_0} \nonumber
       \\
       &\leq N \left(\aint_{B_{R}(x_0)} \|f(\cdot,x)\|_{Y}^{q_0} dx\right)^{1/q_0},
    \end{align}
    \begin{align} \label{eq7031622}
        \left(\aint_{B_{R/4}(x_0)} \|\cL v(\cdot,x)\|_{Y}^{q_1} dx\right)^{1/q_1} &\leq N \sum_{k=0}^\infty 2^{-k\sigma} \left(\aint_{B_{2^{k}R}(x_0)} \|\cL u(\cdot,x)\|_Y^{q_0}dx\right)^{1/q_0} \nonumber
        \\
        &\quad+ N \sum_{k=0}^\infty 2^{-k\sigma} \left(\aint_{B_{2^{k}R}(x_0)} \|f(\cdot,x)\|_Y^{q_0}dx\right)^{1/q_0},
    \end{align}
    and
    \begin{align*}
        \left(\aint_{B_{R/4}(x_0)} \|\lambda v(\cdot,x)\|_{Y}^{q_1} dx\right)^{1/q_1} &\leq N \sum_{k=0}^\infty 2^{-k\sigma} \left(\aint_{B_{2^{k}R}(x_0)} \|\lambda u(\cdot,x)\|_Y^{q_0}dx\right)^{1/q_0} 
        \\
        &\quad+ N \sum_{k=0}^\infty 2^{-k\sigma} \left(\aint_{B_{2^{k}R}(x_0)} \|f(\cdot,x)\|_Y^{q_0}dx\right)^{1/q_0},
    \end{align*}
where $Y:=L_{p,\omega_1}(T)$ and $N=N(d,p,q_0,K_0,\Lambda,\nu,\tilde{\alpha},\tilde{\sigma})$ with $\tilde{\sigma}$ defined as in \eqref{sigma}.
\end{lemma}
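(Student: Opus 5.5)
Following Lemma \ref{lem7021436}, with space and time interchanged, the plan is a cutoff decomposition plus a localized estimate, a Sobolev embedding in $x$, and tail control. By translation we take $x_0=0$, and by the parabolic scaling $\tilde u(t,x)=R^{-\sigma}u(R^{\sigma/\alpha}t,Rx)$ we take $R=1$. This scaling keeps Assumption \ref{as_nonde} and keeps $[\omega_1(R^{\sigma/\alpha}\cdot)]_{A_p}=[\omega_1]_{A_p}$; $T$ changes but stays finite. Fix $\eta_0\in C_c^\infty(B_1)$ with $\eta_0=1$ on $B_{1/2}$. Since the assertions of Theorem \ref{thm_odd} hold for $q_0$ and $T<\infty$, let $w\in\bm{\fH}^{\alpha,\sigma}_{q_0,p,\omega_1,0}(T)$ solve $\partial_t^\alpha w=Lw-\lambda w+f\eta_0$. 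Then \eqref{est_odd} gives \eqref{eq7031544} immediately. Set $v:=u-w$, which solves the homogeneous equation in $(0,T)\times B_{1/2}$.

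For $v$ we use a spatial mollification $v^\varepsilon$ to justify the computations, as in Lemma \ref{lem7021436}. Since $\cL$ is $t$-independent, it commutes with $\partial_t^\alpha$ and with $x$-translations but not with $L$; so we do not apply $\cL$ to the equation. Instead we estimate higher spatial regularity of $v$ itself. Apply \eqref{eq7031124} to $v$ (solution with $f=0$ in $B_{1/2}$, after rescaling to radius $1/2$), with $\cL$ replaced by $(-\Delta)^{\sigma/2}$. This bounds $(-\Delta)^{\sigma/2}(v\zeta)$, $\partial_t^\alpha(v\zeta)$ and $\lambda v\zeta$ in $L_{q_0}(\bR^d;Y)$ by $\|v\|_{L_{q_0}(B_{1/2};Y)}+\|v\|_{L_{1,\varpi}(\bR^d;Y)}$. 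To reach $\cL v$, apply the same argument to the finite differences $\delta_h v=v(\cdot,\cdot+h)-v$: these solve the same equation with $f=0$ near the origin when $L$ is $x$-independent, which it is. Iterating gives $\|\delta_h (v\zeta)\|$ gains of order $|h|^{\sigma}$ in $(-\Delta)^{\sigma/2}$-norm, i.e.\ $\cL v\zeta\in$ a Besov/Bessel space of positive smoothness $s\approx\sigma$ (any $s<\sigma$, chosen in terms of $\tilde\sigma$) with values in $Y$. The Sobolev embedding $H^{s}_{q_0}(\bR^d;Y)\subset L_{q_1}(\bR^d;Y)$ with $1/q_1=1/q_0-s/d$ (or $q_1=\infty$) holds for vector-valued functions, since it is proved by fractional integration and Minkowski. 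It gives $q_1-q_0>\delta(\tilde\sigma,d)$ exactly as in \eqref{eq6171544}. The right-hand side $\|\cL v\|_{L_{q_0}(B_1;Y)}$ plus the weighted $L_1$ tail must be expressed in averages. For this, split $\int |\cdot|(1+|x|^{d+\sigma})^{-1}dx$ dyadically over $B_{2^{k}}\setminus B_{2^{k-1}}$; Hölder then gives $\sum_k 2^{-k\sigma}(\aint_{B_{2^k}}\|\cdot\|_Y^{q_0})^{1/q_0}$. Finally, write $v=u-w$ and use \eqref{eq7031544} on the dyadic balls together with Proposition \ref{lem_conti} to get \eqref{eq7031622}. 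The $\lambda v$ estimate is identical.

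The main obstacle is that the tail terms in \eqref{eq7031124} involve $v$ itself, not $\cL v$, while \eqref{eq7031622} is stated with $\cL u$. I would resolve this by applying the whole argument to $\tilde v:=\cL v$ directly. Since $\cL$ and $L$ are both translation-invariant in $x$, we have $\cL L=L\cL$ (for $t$-fixed kernels, by Fourier multipliers), so $\tilde v^\varepsilon$ solves the homogeneous equation in $B_{1/2}$. Lemma \ref{lem7191752} and \eqref{eq7031124} applied to $\tilde v$ then put only $\cL v$ on the right. The higher integrability of $\tilde v\zeta_0$ comes from the bound on $(-\Delta)^{\sigma/2}(\tilde v\zeta_0)$ in $L_{q_0}(\bR^d;Y)$ via Sobolev embedding of order $\sigma$ (with $q_1=\infty$ if $\sigma q_0>d$, up to a small loss). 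The dependence $\delta(\tilde\sigma,d)$ comes from $\sigma\ge\sigma_0$.
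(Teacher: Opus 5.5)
Your final plan (last paragraph) follows the paper's architecture. You let $w$ solve the equation with cut-off data and set $v=u-w$. You apply $\cL$ to the $v$-equation; this is legitimate because $\cL$ commutes with $L(t)$, both being translation invariant in $x$, and with $\partial_t^\alpha$, since $\cL$ is $t$-independent. So the opposite claim in your first paragraph is wrong, and the finite-difference/Besov detour, which still bounds things by $v$ rather than $\cL v$, can be dropped. You then apply \eqref{eq7031124} at radius $1/2$ to a mollification of $\tilde v=\cL v$, use the embedding \eqref{eq7201656}, and bound $\|\cL v\|_{L_{1,\varpi}(\bR^d;Y)}$ dyadically.

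One step fails as written: $\tilde v$ does \emph{not} solve the homogeneous equation in $B_{1/2}$. Applying $\cL$ to the equation for $v$ gives
\[
\partial_t^\alpha\tilde v=L\tilde v-\lambda\tilde v+\cL\big((1-\eta_0)f\big).
\]
Since $\cL$ is nonlocal, $\cL((1-\eta_0)f)$ is in general nonzero on $B_{1/2}$, even though $(1-\eta_0)f$ vanishes there. So \eqref{eq7031124} applied to $\tilde v$ carries the extra term $\|\cL((1-\eta_0)f)\|_{L_{q_0}(B_{1/2};Y)}$, and it is not true that only $\cL v$ appears on the right. For the $\lambda v$ estimate the source $\lambda(1-\eta_0)f$ really does vanish near the origin, so your remark is fine there.

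This nonlocal source is exactly where the full tail sum of $f$ in \eqref{eq7031622} comes from. In your version $f$ would enter only through $w$, i.e.\ only through its $L_{q_0}(B_1;Y)$ norm, which signals that a term is missing. The paper handles it by taking the cutoff equal to $1$ on $B_{3/4}$ rather than $B_{1/2}$. Then, for $x\in B_{1/2}$, only $|y|>1/4$ contributes to the integral defining $\cL((1-\eta_0)f)(t,x)$. The kernel bound \eqref{upper} and a dyadic decomposition give
\[
|\cL((1-\eta_0)f)(t,x)|\le N\sum_{k=0}^\infty 2^{-k\sigma}\aint_{B_{2^k}}|f(t,z)|\,dz,\quad x\in B_{1/2}.
\]
Minkowski's and H\"older's inequalities then bound the $L_{q_0}(B_{1/2};Y)$ norm of this term by $N\sum_k 2^{-k\sigma}(\aint_{B_{2^k}}\|f(\cdot,z)\|_Y^{q_0}dz)^{1/q_0}$.

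A smaller point: to bound the averages of $\cL w$ over $B_{2^k}$ you need the global estimate \eqref{est_odd} for $w$, which gives the factor $2^{-kd/q_0}$. The local bound \eqref{eq7031544} is not enough, since it only controls $B_{R/2}$. With these two corrections your argument coincides with the paper's.
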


\begin{proof}
As in the proof of Lemma \ref{lem7021436}, by scaling and using the fact that $N$ is independent of $T$, we may assume that $R=1$. Moreover, by shifting the coordinates, we also assume that $x_0=0$. Due to the similarity, we only prove \eqref{eq7031544} and \eqref{eq7031622}.

Let $\zeta\in C_c^\infty(B_1)$ be a cutoff function such that $\zeta=1$ in $B_{3/4}$. Since Theorem \ref{thm_odd} holds in $\bm{\fH}_{q_0,p,\omega_1,0}^{\alpha,\sigma}(T)$, there is a solution $w\in \bm{\fH}_{q_0,p,\omega_1,0}^{\alpha,\sigma}(T)$ to
\begin{equation*}
    \partial_t^\alpha w= Lw -\lambda w + \zeta f.
\end{equation*}
Moreover, we have
    \begin{align} \label{eq7202139}
        \|\partial_t^{\alpha}w\|_{\bm{\fL}_{q_0,p,\omega_1}(T)} + \|\cL w\|_{\bm{\fL}_{q_0,p,\omega_1}(T)} + \lambda\|w\|_{\bm{\fL}_{q_0,p,\omega_1}(T)} \leq N \|f\zeta\|_{\bm{\fL}_{q_0,p,\omega_1}(T)},
    \end{align}
    which easily yields \eqref{eq7031544}.
Here, due to $T<\infty$, we remark that the existence is guaranteed regardless of whether $\lambda=0$.

    Let $v:=u-w\in \bm{\fH}_{q_0,p,\omega_1,0}^{\alpha,\sigma}(T)$.
    For $\varepsilon>0$ and any function $h$ on $(0,T)\times \bR^d$, $h^\varepsilon :=h*\varphi^\varepsilon$, where $\int_{\bR^d}\varphi dx=1$ and $\varphi^\varepsilon(x):=\varepsilon^{-d}\varphi(x/\varepsilon)$.
    Then it can be shown that $\tilde{v}^\varepsilon:=\cL v^\varepsilon = (\cL v)^{\varepsilon} \in \bm{\fH}_{q_0,p,\omega_1,0}^{\alpha,\sigma}(T)$ satisfies
    \begin{equation*}
            \partial_t^\alpha\tilde{v}^\varepsilon= L\tilde{v}^\varepsilon -\lambda \tilde{v}^\varepsilon + \cL([(1-\zeta)f]^\varepsilon).
    \end{equation*}
    By \eqref{eq7031124} with $R=1/2$, for a cutoff function $\zeta_0\in C_c^\infty(B_{1/2})$ satisfying \eqref{cutoff},
    \begin{align*}
        \|(-\Delta)^{\sigma/2}\zeta_0(\cL v)^\varepsilon\|_{\bm{\fL}_{q_0,p,\omega_1}(T)} &\leq N\|\cL([(1-\zeta)f]^\varepsilon)\|_{L_{q_0}(B_{1/2};Y)} \nonumber
        \\
        &\quad+ N \|\cL v^\varepsilon\|_{L_{q_0}(B_{1/2};Y)} + N \|\cL v^\varepsilon\|_{L_{1,\varpi}(\bR^d;Y)}.
    \end{align*}
    Letting $\varepsilon\to0$,
    \begin{align} \label{eq7202131}
        \|(-\Delta)^{\sigma/2}(\zeta_0\cL v)\|_{\bm{\fL}_{q_0,p,\omega_1}(T)} &\leq N\|\cL((1-\zeta)f)\|_{L_{q_0}(B_{1/2};Y)} \nonumber
        \\
        &\quad+ N \|\cL v\|_{L_{q_0}(B_{1/2};Y)} + N \|\cL v\|_{L_{1,\varpi}(\bR^d;Y)}.
    \end{align}
    Let us estimate $\cL((1-\zeta)f)$. Since $1-\zeta=0$ in $B_{3/4}$, by \eqref{eq_op} and \eqref{upper}, for $x\in B_{1/2}$
    \begin{align*}
        |\cL((1-\zeta)f)(t,x)| &\leq N \int_{|x+y|>3/4} |(1-\zeta)(x+y)||f(t,x+y)| |y|^{-d-\sigma} dy \nonumber
        \\
        &\leq N \int_{|y|>1/4} |f(t,x+y)| |y|^{-d-\sigma} dy \nonumber
        \\
        &= N \sum_{k=0}^\infty \int_{B_{2^{k-1}}\setminus B_{2^{k-2}}} |f(t,x+y)| |y|^{-d-\sigma} dy \nonumber
        \\
        &\leq N \sum_{k=0}^\infty 2^{-kd-k\sigma} \int_{B_{2^{k}}} |f(t,y)|dy \nonumber
        \\
        &\leq N \sum_{k=0}^\infty 2^{-k\sigma} \aint_{B_{2^{k}}} |f(t,y)|dy.
    \end{align*}
    By the Minkowski inequality and H\"older's inequality,
    \begin{align} \label{eq7202132}
        \|\cL((1-\zeta)f)\|_{L_{q_0}(B_{1/2};Y)} \leq N \sum_{k=0}^\infty 2^{-k\sigma} \left(\aint_{B_{2^{k}}} \|f(\cdot,x)\|_Y^{q_0}dx\right)^{1/q_0}.
    \end{align}
    For $\|\cL v\|_{L_{1,\varpi}(\bR^d;Y)}$, by H\"older's inequality,
    \begin{align} \label{eq7202133}
        \|\cL v\|_{L_{1,\varpi}(\bR^d;Y)} &\leq \int_{B_1} \|\cL v(\cdot,x)\|_{Y} dx + \sum_{k=0}^\infty 2^{-kd-k\sigma} \int_{B_{2^{k+1}}\setminus B_{2^k}} \|\cL v(\cdot,x)\|_{Y} dx \nonumber
        \\
        &\leq N \sum_{k=0}^\infty 2^{-k\sigma} \aint_{B_{2^{k}}} \|\cL v(\cdot,x)\|_{Y} dx \nonumber
        \\
        &\leq N \sum_{k=0}^\infty 2^{-k\sigma} \left(\aint_{B_{2^{k+1}}} \|\cL v(\cdot,x)\|_Y^{q_0}dx\right)^{1/q_0}.
    \end{align}

Let $q_1\in (q_0,\infty]$ so that
\begin{equation*}
    1/q_1=1/q_0-\sigma_0/2d \quad \text{ if } q_0\leq 2d/\sigma_0,
\end{equation*}
and $q_1=\infty$ if $q_0 > 2d/\sigma_0$. Then we see that when $q_0< 2d/\sigma_0$,
\begin{equation*}
    q_1-q_0 = \frac{\sigma_0q_0^2}{2d-\sigma_0q_0} > \frac{\sigma_0}{2d-\sigma_0}>0,
\end{equation*}
which implies \eqref{eq7202135}. Here, we put $\sigma_0:=1$ if $\sigma=1$.

    Now we apply \eqref{eq7201656} with $\zeta_0\cL v$ and $(q_1,q_0)$ in place of $u$ and $(\tilde{q},q)$, respectively. Then we have
    \begin{align*}
        \|\zeta_0\cL v\|_{\bm{\fL}_{q_1,p,\omega_1}(T)} \leq N \|\zeta_0\cL v\|_{\bm{\fL}_{q_0,p,\omega_1}(T)} + N \|(-\Delta)^{\sigma/2}(\zeta_0\cL v)\|_{\bm{\fL}_{q_0,p,\omega_1}(T)}.
    \end{align*}
    This together with \eqref{eq7202131}, \eqref{eq7202132}, and \eqref{eq7202133} leads to
    \begin{align}
        &\left(\aint_{B_{1/4}} \|\cL v(\cdot,x)\|_{Y}^{q_1} dx\right)^{1/q_1} \nonumber
        \\
        &\leq N \sum_{k=0}^\infty 2^{-k\sigma} \left(\aint_{B_{2^{k}}} \|\cL v(\cdot,x)\|_Y^{q_0}dx\right)^{1/q_0} + N \sum_{k=0}^\infty 2^{-k\sigma} \left(\aint_{B_{2^{k}}} \|f(\cdot,x)\|_Y^{q_0}dx\right)^{1/q_0} \nonumber
        \\
        &\leq N \sum_{k=0}^\infty 2^{-k\sigma} \left(\aint_{B_{2^{k}}} \|\cL u(\cdot,x)\|_Y^{q_0}dx\right)^{1/q_0} + N \sum_{k=0}^\infty 2^{-k\sigma} \left(\aint_{B_{2^{k}}} \|\cL w(\cdot,x)\|_Y^{q_0}dx\right)^{1/q_0} \nonumber
             \\
        &\quad+ N \sum_{k=0}^\infty 2^{-k\sigma} \left(\aint_{B_{2^{k}}} \|f(\cdot,x)\|_Y^{q_0}dx\right)^{1/q_0}. \label{eq7202143}
    \end{align}
    Here, for the last inequality, we used $v=u-w$. Due to \eqref{eq7202139}, for each $k\in \bN_0$,
    \begin{align*}
        \left(\aint_{B_{2^{k}}} \|\cL w(\cdot,x)\|_Y^{q_0}dx\right)^{1/q_0} \leq N 2^{-kd/q_0} \left(\aint_{B_{1}} \|f(\cdot,x)\|_Y^{q_0}dx\right)^{1/q_0}.
        \end{align*}
        This and \eqref{eq7202143} yield \eqref{eq7031622}.
The lemma is proved.
\end{proof}

We introduce the notation used below.
Let $Y:=L_{p,\omega_1}(T)$ and
\begin{equation*}
    \sM_{x,q}[h](x) := \left(\bM_x \|h(\cdot,\star)\|_Y^q(x)\right)^{1/q} = \left(\sup_{B_R(y)\ni x} \aint_{B_R(y)} \|h(\cdot,z)\|_Y^{q} dz\right)^{1/q},
\end{equation*}
where $\bM_x$ is the uncentered Hardy-Littlewood maximal operator in the spatial variables defined by
\begin{equation*}
    \bM_x g(x):=\sup_{B_R(y)\ni x}\aint_{B_R(y)}|g(z)| dz.
\end{equation*}
For $s>0$ and $\gamma\in(0,1)$, let
\begin{equation*}
    \widetilde{\cA}(s):=\big\{x\in\bR^d: \|\cL u(\cdot,x)\|_Y>s\big\}, \qquad
    \widetilde{\cA}'(s):=\big\{x\in\bR^d:\|\lambda u(\cdot,x)\|_Y>s\big\},
\end{equation*}
and
\begin{align*}
    \widetilde{\cB}_\gamma(s) := \big\{x\in\bR^d:
    \gamma^{-1/q_0}\sM_{x,q_0}[f](x) + \gamma^{-1/q_1}\sM_{x,q_0}[\cL u](x) > s\big\},
\end{align*}
where $q_1$ is taken from Lemma \ref{lem7202344}. We also define
\begin{align*}
    \widetilde{\cB}'_\gamma(s) := \big\{x\in\bR^d:
    \gamma^{-1/q_0} \sM_{x,q_0}[f](x) + \gamma^{-1/q_1} \sM_{x,q_0}[\lambda u](x) > s\big\}.
\end{align*}

\begin{lemma} \label{lem7301237}
Let $\gamma\in(0,1)$ and the assumptions of Lemma \ref{lem7202344} hold. Then there exists a sufficiently large constant $\kappa=\kappa(d,p,q_0,K_0,\nu,\Lambda,\tilde\alpha,\tilde\sigma)>1$
such that the following assertion holds. Let $x_0\in\bR^d$, $s>0$, $R>0$, and $\tilde R:=R/4$. If
\begin{equation} \label{eq7210027}
    |B_{\tilde R}(x_0)\cap\widetilde{\cA}(\kappa s)| \geq \gamma |B_{\tilde R}(x_0)|,
\end{equation}
then
\begin{equation*}
    B_{\widetilde R}(x_0)\subset \widetilde{\cB}_\gamma(s).
\end{equation*}
The same assertion holds with $(\widetilde{\cA},\widetilde{\cB}_\gamma)$ replaced by
$(\widetilde{\cA}',\widetilde{\cB}'_\gamma)$.
\end{lemma}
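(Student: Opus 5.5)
The argument is the spatial analogue of the proof of Lemma \ref{lem7172353}, now using the decomposition of Lemma \ref{lem7202344}. Since the equation is linear, dividing by $s$ reduces to $s=1$. I argue by contradiction: suppose some $x\in B_{\tilde R}(x_0)$ lies outside $\widetilde{\cB}_\gamma(1)$. Then
\begin{equation*}
\gamma^{-1/q_0}\sM_{x,q_0}[f](x)+\gamma^{-1/q_1}\sM_{x,q_0}[\cL u](x)\le 1 .
\end{equation*}
Every ball $B_{2^kR}(x_0)$, $k\in\bN_0$, contains $x$, because $|x-x_0|<R/4$. Hence, by the definition of the uncentered maximal function,
\begin{equation*}
\Big(\aint_{B_{2^kR}(x_0)}\|f(\cdot,y)\|_Y^{q_0}dy\Big)^{1/q_0}\le \gamma^{1/q_0}, \qquad \Big(\aint_{B_{2^kR}(x_0)}\|\cL u(\cdot,y)\|_Y^{q_0}dy\Big)^{1/q_0}\le \gamma^{1/q_1}
\end{equation*}
for all $k\in\bN_0$.

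Next I apply Lemma \ref{lem7202344} at the point $x_0$ with radius $R$ and write $u=v+w$. By \eqref{eq7031544},
\begin{equation*}
\Big(\aint_{B_{R/2}(x_0)}\|\cL w\|_Y^{q_0}\Big)^{1/q_0}\le N\gamma^{1/q_0}.
\end{equation*}
By \eqref{eq7031622}, and since $\sum_k 2^{-k\sigma}$ is bounded in terms of $\tilde\sigma$,
\begin{equation*}
\Big(\aint_{B_{R/4}(x_0)}\|\cL v\|_Y^{q_1}\Big)^{1/q_1}\le N(\gamma^{1/q_1}+\gamma^{1/q_0})\le N\gamma^{1/q_1},
\end{equation*}
where the last step uses $q_1>q_0$ and $\gamma<1$. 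Lemma \ref{lem7202344} requires $T<\infty$ and $\cL$ independent of $t$, and both are part of the hypotheses here.

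Now fix $C_1\in(0,\kappa)$. Since $\tilde R=R/4$, Chebyshev's inequality gives
\begin{equation*}
|B_{\tilde R}(x_0)\cap\widetilde{\cA}(\kappa)|\le |\{y\in B_{R/4}(x_0):\|\cL w(\cdot,y)\|_Y>\kappa-C_1\}|+|\{y\in B_{R/4}(x_0):\|\cL v(\cdot,y)\|_Y>C_1\}|.
\end{equation*}
This is bounded by
\begin{equation*}
N R^d\Big(\frac{\gamma}{(\kappa-C_1)^{q_0}}+\frac{\gamma}{C_1^{q_1}}\Big).
\end{equation*}
Choosing $C_1$ large and then $\kappa$ large makes the right-hand side strictly less than $\gamma|B_{\tilde R}(x_0)|$, which contradicts \eqref{eq7210027}. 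If $q_1=\infty$, the $L_\infty$ bound gives $\|\cL v(\cdot,y)\|_Y\le N$ a.e. on $B_{R/4}(x_0)$, so choosing $C_1>N$ makes the second set null. The constants depend only on the listed parameters. The case $(\widetilde{\cA}',\widetilde{\cB}'_\gamma)$ is identical, using the $\lambda v$, $\lambda w$ estimates of Lemma \ref{lem7202344}.

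The main point needing care is the geometry. Every ball $B_{2^kR}(x_0)$ appearing in Lemma \ref{lem7202344} must contain the bad point $x$, so that the maximal function bounds apply to all of them. The ball $B_{R/4}(x_0)$, where $v$ is controlled in $L_{q_1}$, must also contain $B_{\tilde R}(x_0)$. Both hold with $\tilde R=R/4$. Unlike in time, there is no one-sided truncation $\widehat{\cC}$ to worry about.
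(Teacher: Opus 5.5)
Your proposal is correct and follows the paper's proof step for step: you reduce to $s=1$, argue by contradiction from a point $x\in B_{\tilde R}(x_0)\setminus\widetilde{\cB}_\gamma(1)$, get maximal-function bounds on every ball $B_{2^kR}(x_0)$, use the decomposition $u=v+w$ from Lemma \ref{lem7202344}, and apply Chebyshev's inequality with $C_1$ and then $\kappa$ chosen large. The only differences are cosmetic: you bound the $w$-set on $B_{R/4}(x_0)$ rather than $B_{R/2}(x_0)$, and you treat the case $q_1=\infty$ explicitly, which the paper does only in the temporal analogue, Lemma \ref{lem7172353}.
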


\begin{proof}
As in the proof of Lemma \ref{lem7172353}, we only prove the assertion for $\widetilde{\cA}$ and $\widetilde{\cB}_\gamma$ with $s=1$.

Suppose, to the contrary, that there exists
$x\in B_{\widetilde R}(x_0)$ such that $x\notin\widetilde{\cB}_\gamma(1)$. Then
\begin{equation*} 
    \gamma^{-1/q_0}\sM_{x,q_0}[f](x) + \gamma^{-1/q_1} \sM_{x,q_0}[\cL u](x) \leq 1.
\end{equation*}
Since $x\in B_{\widetilde R}(x_0)\subset B_{2^kR}(x_0)$ for every
$k\in\bN_0$,
\begin{equation*}
    \left(\aint_{B_{2^kR}(x_0)} \|f(\cdot,y)\|_{Y}^{q_0} dy\right)^{1/q_0} \leq \gamma^{1/q_0}, \quad \left(\aint_{B_{2^kR}(x_0)} \|\cL u(\cdot,y)\|_{Y}^{q_0} dy\right)^{1/q_0} \leq \gamma^{1/q_1}.
\end{equation*}
Thus, similar to \eqref{eq7172238}, by Lemma \ref{lem7202344}, there are $v,w\in\bm{\fH}_{q_0,p,\omega_1,0}^{\alpha,\sigma}(T)$ such that $u=v+w$, and
\begin{align*}
    &\left(\aint_{B_{R/2}(x_0)} \|\cL w(\cdot,y)\|_{Y}^{q_0} dy \right)^{1/q_0} \leq N\gamma^{1/q_0},
    \\
    &\left(\aint_{B_{R/4}(x_0)} \|\cL v(\cdot,y)\|_{Y}^{q_1} dy \right)^{1/q_1} \leq N\gamma^{1/q_1}.
\end{align*}

Let $\kappa>0$ and $C_1\in(0,\kappa)$ be constants, which will be determined later. Since $u=v+w$, the Chebyshev inequality yields that
\begin{align*}
    |B_{\widetilde R}(x_0)\cap\widetilde{\cA}(\kappa)|
    &\leq |\{y\in B_{R/2}(x_0): \|\cL w(\cdot,y)\|_{Y}>\kappa-C_1\}|
    \\
    &\quad+|\{y\in B_{R/4}(x_0): \|\cL v(\cdot,y)\|_{Y} >C_1\}|
    \\
    &\leq NR^d\left(
       \frac{\gamma}{(\kappa-C_1)^{q_0}}
       +\frac{\gamma}{C_1^{q_1}}\right).
\end{align*}
We first choose $C_1$ sufficiently large and then choose $\kappa$ sufficiently
large, so that
\begin{equation*}
    |B_{\tilde{R}}(x_0)\cap\widetilde{\cA}(\kappa)|
    <\gamma|B_{\tilde{R}}(x_0)|,
\end{equation*}
which contradicts \eqref{eq7210027}. 
The lemma is proved.
\end{proof}

\begin{proof}[Proof of Proposition \ref{prop7210028}]

\textbf{1.} The case $q\geq p$.

We first prove the a priori estimate \eqref{est_odd} when $u\in \bm{\fH}_{q,p,\omega_1,0}^{\alpha,\sigma}(T)$. Since the constant $N$ in \eqref{est_odd} is independent of $T$, we only handle the case $T<\infty$. By Lemma \ref{lem_dense}, it suffices to consider $u\in C_c^\infty([0,T]\times \bR^d)$ with $u(0,x)=0$.
 Due to Proposition \ref{lem_conti}, it suffices to prove \eqref{est_odd} when $\cL=-(-\Delta)^{\sigma/2}$. 
 
 Let $q_0:=p$, and $q_1\in(q_0,\infty]$ be as in Lemma \ref{lem7202344}.
 It follows from \eqref{eq7291548}, $T<\infty$, and $u\in C_c^\infty([0,T]\times \bR^d)$ that $u\in \bm{\fH}_{p,p,\omega_1,0}^{\alpha,\sigma}(T)$, which allows us to apply Lemma \ref{lem7301237} to $u$. Thus, by repeating the proof of Step \textbf{1} in Proposition \ref{prop7021543}, one can derive the estimate \eqref{est_odd} when $q\in (q_0,q_1)$.
Then the general case $q\geq q_0$ follows by the same argument as in Step \textbf{2} of the proof of Proposition \ref{prop7021543}.

Next, we prove the solvability. Due to Remark \ref{remCc} and \eqref{est_odd}, it suffices to find a solution when $f\in C_c^\infty((0,T)\times\bR^d)$. By Remarks \ref{rem_pqtime} and \ref{remsame}, there exists a solution $u$ to \eqref{eqmainodd} such that, for every $p\in(1,\infty)$ and every $\omega_1\in A_p(\bR)$ satisfying $[\omega_1]_{A_p(\bR)}\leq K_0$, we have $u\in\bH_{p,q,\omega_1,0}^{\alpha,\sigma}(T)$. By the Minkowski inequality, we get
\begin{equation*}
    \bL_{p,q,\omega_1}(T) \subset \bm{\fL}_{q,p,\omega_1}(T),
\end{equation*}
which yields $u\in \bm{\fH}_{q,p,\omega_1,0}^{\alpha,\sigma}(T)$.

\textbf{2.} Next, we deal with $q\leq p$.

To prove \eqref{est_odd}, as in Step \textbf{3} of the proof of Proposition \ref{prop7021543}, we use Lemma \ref{lem_dual}. 
Let $p':=p/(p-1)$, $q':=q/(q-1)$, and $\omega_1':=\omega_1^{-1/(p-1)}\in A_{p'}(\bR)$. Since $q'\geq p'$, by the result of Step \textbf{1}, the assertions of Theorem \ref{thm_odd} hold in $\bm{\fH}_{q',p',\omega_1',0}^{\alpha,\sigma}(T)$. By Lemma \ref{lem_dual}, both the estimate \eqref{est_odd} and the solvability are obtained in $\bm{\fH}_{q,p,\omega_1,0}^{\alpha,\sigma}(T)$.
The proposition is proved.
\end{proof}

\subsection{Proof of Theorem \ref{thm_odd}}

In this subsection, we aim to prove Theorem \ref{thm_odd}.

For a Banach space $Y$, a ball $B\subset\bR^d$, and $\vartheta\in(0,1)$, we use the notation
\begin{equation*}
    [h]_{C^\vartheta(B;Y)}:=\sup_{x,y\in B, x\neq y}\frac{\|h(\cdot,x)-h(\cdot,y)\|_Y}{|x-y|^\vartheta}.
\end{equation*}

We recall that $\tilde{\sigma}$ in Theorem \ref{thm_odd} is defined as \eqref{sigma}.
In Lemmas \ref{lem7211430} and \ref{lem8101843}, we use the convention $\sigma_0:=1$ when $\sigma=1$.

\begin{lemma} \label{lem7211430}
    Let the parameters in Theorem \ref{thm_odd}, except for $q$ and the weight $\omega_2$, be given, and fix $\omega_1\in A_p(\bR)$ satisfying $[\omega_1]_{A_p(\bR)}\leq K_0$.
    Assume further that $T<\infty$ and that $\cL$ is independent of $t$.
    Let $q_0\in(1,\infty)$, and $u\in \bm{\fH}_{q_0,p,\omega_1,0}^{\alpha,\sigma}(T)$ be a solution to \eqref{eqmainodd}. 
    Then for any $x_0\in\bR^d$ and $R>0$, there exist $v,w\in \bm{\fH}_{q_0,p,\omega_1,0}^{\alpha,\sigma}(T)$ such that $u=v+w$,
    \begin{align} \label{eq7211431}
        &\left(\aint_{B_{R}(x_0)} \|\cL w(\cdot,x)\|_Y^{q_0}dx\right)^{1/q_0}
        +\left(\aint_{B_{R}(x_0)} \|\lambda w(\cdot,x)\|_Y^{q_0}dx\right)^{1/q_0} \nonumber
        \\
        &\leq N\left(\aint_{B_R(x_0)} \|f(\cdot,x)\|_Y^{q_0}dx\right)^{1/q_0},
    \end{align}
    and, for any $\vartheta\in(0,\sigma_0/2)$,
    \begin{align} \label{eq7211432}
        &R^\vartheta[\cL v]_{C^\vartheta(B_{R/4}(x_0);Y)} \nonumber
        \\
        &\leq N\sum_{k=0}^{\infty}2^{-k\sigma}
        \left(\aint_{B_{2^kR}(x_0)}\|\cL u(\cdot,x)\|_Y^{q_0}dx\right)^{1/q_0} \nonumber
        \\
        &\quad+N\sum_{k=0}^{\infty}2^{-k\sigma}
        \left(\aint_{B_{2^kR}(x_0)}\|f(\cdot,x)\|_Y^{q_0}dx\right)^{1/q_0}
    \end{align}
    and
    \begin{align*}
        &R^\vartheta[\lambda v]_{C^\vartheta(B_{R/4}(x_0);Y)} 
        \\
        &\leq N\sum_{k=0}^{\infty}2^{-k\sigma}
        \left(\aint_{B_{2^kR}(x_0)}\|\lambda u(\cdot,x)\|_Y^{q_0}dx\right)^{1/q_0} 
        \\
        &\quad+N\sum_{k=0}^{\infty}2^{-k\sigma}
        \left(\aint_{B_{2^kR}(x_0)}\|f(\cdot,x)\|_Y^{q_0}dx\right)^{1/q_0},
    \end{align*}
    where $Y:=L_{p,\omega_1}(T)$ and $N=N(d,p,q_0,K_0,\Lambda,\nu,\tilde{\alpha},\tilde{\sigma},\vartheta)$.
\end{lemma}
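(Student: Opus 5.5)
We follow the proof of Lemma \ref{lem7182203}, with space in place of time. By scaling ($x\mapsto Rx$, $t\mapsto R^{\sigma/\alpha}t$; the constants are independent of $T$ and $[\omega_1(R^{\sigma/\alpha}\cdot)]_{A_p}=[\omega_1]_{A_p}$) and translation, we may take $R=1$ and $x_0=0$. By Proposition \ref{prop7210028}, the assertions of Theorem \ref{thm_odd} hold for every spatial exponent with weight $\omega_1(t)$. In particular they hold for $q_0$, so we define $w$ exactly as in Lemma \ref{lem7202344}. We take $\zeta\in C_c^\infty(B_1)$ with $\zeta=1$ on $B_{3/4}$, and let $w$ solve $\partial_t^\alpha w=Lw-\lambda w+\zeta f$. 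Here $T<\infty$ is used for solvability when $\lambda=0$. Then \eqref{est_odd} with $q=q_0$ gives \eqref{eq7211431} directly. We set $v:=u-w$. After spatial mollification, $\tilde v:=\cL v$ solves $\partial_t^\alpha\tilde v=L\tilde v-\lambda\tilde v+\cL((1-\zeta)f)$, and the forcing term is controlled on $B_{1/2}$ by the tail sum \eqref{eq7202132}.

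\textbf{Step 1: higher integrability in space.} We prove the analogue of \eqref{eq7182101}: for every $q\in(q_0,\infty)$,
\[
\Big(\aint_{B_{1/4}}\|\cL v(\cdot,x)\|_Y^{q}dx\Big)^{1/q}\le N\sum_{k\ge0}2^{-k\sigma}\Big(\aint_{B_{2^k}}\|\cL v(\cdot,x)\|_Y^{q_0}dx\Big)^{1/q_0}+N\sum_{k\ge0}2^{-k\sigma}\Big(\aint_{B_{2^k}}\|f(\cdot,x)\|_Y^{q_0}dx\Big)^{1/q_0}.
\]
We iterate the argument of Lemma \ref{lem7202344} along a sequence of shrinking cutoffs $\zeta_0^{(n)}$ supported in $B_{2^{-2n+1}}$. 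At each stage we apply the localized estimate \eqref{eq7031124}, now valid for every exponent, to $\zeta_0^{(n)}\cL v$. We then use the Sobolev embedding \eqref{eq7201656} to pass from $q_{n}$ to $q_{n+1}$ with $1/q_{n+1}=1/q_n-\sigma_0/(2d)$. The $L_{1,\varpi}$ tail is bounded by \eqref{eq7202133}. Since the increment is uniform, after finitely many steps the exponent exceeds $2d/\sigma_0$. A covering argument and H\"older's inequality then give the bound for all finite $q$.

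\textbf{Step 2: H\"older continuity.} For $\vartheta\in(0,\sigma_0/2)$ we choose $q$ large, say so that $\vartheta<\sigma_0/2-d/q$. We use a Morrey-type embedding for $Y$-valued functions, $[h]_{C^\vartheta(\bR^d;Y)}\le N\|(-\Delta)^{\sigma/2}h\|_{L_q(\bR^d;Y)}+N\|h\|_{L_q(\bR^d;Y)}$; this is the spatial counterpart of \eqref{eq7181421}, presumably among the appendix results. We apply it to $h=\zeta_0\cL v$ with $\zeta_0$ supported in $B_{1/2}$ and equal to $1$ on $B_{1/4}$. The right-hand side is bounded by \eqref{eq7031124} in $L_q$ by $\|\cL v\|_{L_q(B_{1/2};Y)}$, the $L_{1,\varpi}$ tail, and $\|\cL((1-\zeta)f)\|_{L_q(B_{1/2};Y)}$. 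For the first of these we use Step 1. For the tail term we use a pointwise bound uniform in $x\in B_{1/2}$ as in \eqref{eq7202132}, followed by H\"older's inequality. We then substitute $v=u-w$ and estimate the $w$-terms by \eqref{eq7202139}, which yields the factor $2^{-kd/q_0}$. Together these give \eqref{eq7211432}. The estimate for $\lambda v$ is identical, with $\lambda v$ solving the same equation.

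\textbf{Main obstacle.} The delicate point is the vector-valued embedding: a fractional Sobolev-to-H\"older embedding for $Y$-valued functions, where $Y=L_{p,\omega_1}$ is a UMD lattice. It needs a constant uniform as $\sigma\to2$ and must handle $\sigma<1$, which is why the H\"older exponent is restricted to $\vartheta<\sigma_0/2$. If no direct appendix result is available, I would first try to derive the embedding from \eqref{eq7201656} by iterating the integrability gain together with Campanato's characterization in $x$ applied to the $Y$-norm.
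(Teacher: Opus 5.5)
Your proposal is correct and follows the paper's proof essentially step by step. You use the same $w$ built from $\zeta f$ via Proposition \ref{prop7210028}, the same iterated higher-integrability argument for $\cL v$ (using \eqref{eq7031124} at each exponent, \eqref{eq7201656} and \eqref{eq7202133}), and then a vector-valued H\"older bound combined with a covering argument and \eqref{eq7211433}. The embedding you flag as the main obstacle is exactly the appendix estimate \eqref{eq7201657}; the paper proves it by pairing with $\phi\in L_{p',\omega_1'}(T)$, which reduces it to the scalar Sobolev embedding for $G*h$, and applies it with $q\in(d/\sigma_0,2d/\sigma_0)$ and $\vartheta\le\sigma_0-d/q$ rather than with $q$ large.
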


\begin{proof}
    By a translation and a scaling, we may assume that $x_0=0$ and $R=1$. Let $\zeta\in C_c^\infty(B_1)$ be a cutoff function such that $\zeta=1$ in $B_{3/4}$. By Proposition \ref{prop7210028}, there is $w\in \bm{\fH}_{q_0,p,\omega_1,0}^{\alpha,\sigma}(T)$ satisfying
    \begin{equation*}
        \partial_t^\alpha w=Lw-\lambda w+\zeta f.
    \end{equation*}
    Moreover,
    \begin{align} \label{eq7211433}
        \|\partial_t^\alpha w\|_{\bm{\fL}_{q_0,p,\omega_1}(T)}
        +\|\cL w\|_{\bm{\fL}_{q_0,p,\omega_1}(T)}
        +\lambda\|w\|_{\bm{\fL}_{q_0,p,\omega_1}(T)}
        \leq N\|\zeta f\|_{\bm{\fL}_{q_0,p,\omega_1}(T)}.
    \end{align}
    This yields \eqref{eq7211431} with $R=1$.

    Now we consider $v:=u-w\in \bm{\fH}_{q_0,p,\omega_1,0}^{\alpha,\sigma}(T)$. Due to the similarity, we only prove \eqref{eq7211432}. Since $\cL$ is independent of $t$, $\tilde{v}:=\cL v$ satisfies
    \begin{equation} \label{eq7211434}
        \partial_t^\alpha \tilde{v} = L\tilde{v}-\lambda \tilde{v} + \cL((1-\zeta)f).
    \end{equation}
    Note that if $v$ is not regular enough, we can first mollify \eqref{eq7211434} and then take the limit of the estimate as in \eqref{eq7202131}.

    We first claim that for any $q\in(q_0,\infty)$,
    \begin{align} \label{eq7211437}
        \left(\aint_{B_{1/4}}\|\tilde{v}(\cdot,x)\|_Y^qdx\right)^{1/q}
        &\leq N \sum_{k=0}^{\infty}2^{-k\sigma}
        \left( \aint_{B_{2^{k}}} \|\tilde{v}(\cdot,x)\|_Y^{q_0} dx \right)^{1/q_0} \nonumber
        \\
        &\quad+ N\sum_{k=0}^{\infty}2^{-k\sigma}
        \left(\aint_{B_{2^k}} \|f(\cdot,x)\|_Y^{q_0}dx\right)^{1/q_0}.
    \end{align}
    For $n\in\bN$, as long as the right-hand side is positive, denote
    \begin{equation*}
        \frac{1}{q_n}:=\frac{1}{q_0}-\frac{n\sigma_0}{2d}.
    \end{equation*}
    Here, we put $\sigma_0=1$ when $\sigma=1$.
   Let $\tilde{\zeta}_n\in C_c^\infty(B_{2^{-2n+1}})$ be a cutoff function such that
    \begin{equation*}
        \tilde{\zeta}_n=1 \text{ in } B_{2^{-2n}}, \qquad
       |D_x \tilde{\zeta}_n|\leq N2^{2n},
    \end{equation*}
    which actually satisfies \eqref{cutoff} with $R=2^{1-2n}$.
   Suppose first that $q_0<2d/\sigma_0$. 
   By repeating the proof of \eqref{eq7031622}, one can easily get \eqref{eq7211437} for $q\in(q_0,q_1]$.

    Next, assume that $q_1$ is finite and
    \begin{equation*}
        \frac{1}{q_1}>\frac{\sigma_0}{2d}.
    \end{equation*}
    Then \eqref{eq7211437} with $q=q_1$ implies that $\tilde{\zeta}_2\tilde{v} \in \bm{\fL}_{q_1,p,\omega_1}(T)$. By repeating the above argument, one can obtain
        \begin{align*} 
        \left(\aint_{B_{1/16}}\|\tilde{v}(\cdot,x)\|_Y^{q_2 }dx\right)^{1/q_2}
        &\leq N \sum_{k=0}^{\infty}2^{-k\sigma}
        \left( \aint_{B_{2^{k}}} \|\tilde{v}(\cdot,x)\|_Y^{q_0} dx \right)^{1/q_0}
        \\
        &\quad+ N\sum_{k=0}^{\infty}2^{-k\sigma}
        \left(\aint_{B_{2^k}} \|f(\cdot,x)\|_Y^{q_0}dx\right)^{1/q_0}.
    \end{align*}
    By a covering argument and H\"older's inequality, \eqref{eq7211437} follows for $q\in(q_0,q_{2}]$.
    Actually, after finitely many iterations, we reach an exponent $q_n$ such that
    \begin{equation*}
        \frac{1}{q_n}\leq\frac{\sigma_0}{2d}.
    \end{equation*}
    At this stage, we define $q_{n+1}=\infty$ and repeat the same argument. Then \eqref{eq7211437} holds with $q=\infty$, which in particular proves the result for every $q<\infty$.

    Lastly, we prove \eqref{eq7211432}. Take $q\in(1,\infty)$ so that $\vartheta\leq \sigma_0-d/q$. Then by \eqref{eq7201657},
    \begin{align*}
        [\cL v]_{C^\vartheta(B_{1/16};Y)} &\leq N[\cL v]_{C^{\sigma_0-d/q}(B_{1/16};Y)} 
        \\
        &\leq N\left( \|\tilde{\zeta}_2\cL v\|_{\bm{\fL}_{q,p,\omega_1}(T)} + \|(-\Delta)^{\sigma/2}(\tilde{\zeta}_2\cL v)\|_{\bm{\fL}_{q,p,\omega_1}(T)} \right).
    \end{align*}
    Thus, again by \eqref{eq7202131} and a covering argument, one can deduce that
    \begin{align*}
        [\cL v]_{C^\vartheta(B_{1/4};Y)} &\leq N \sum_{k=0}^{\infty}2^{-k\sigma}
        \left( \aint_{B_{2^{k}}} \| \cL v(\cdot,x)\|_Y^{q_0} dx \right)^{1/q_0}
        \\
        &\quad+ N\sum_{k=0}^{\infty}2^{-k\sigma}
        \left(\aint_{B_{2^k}}\|f(\cdot,x)\|_Y^{q_0}dx\right)^{1/q_0}
        \\
        &\leq N \sum_{k=0}^{\infty}2^{-k\sigma}
        \left( \aint_{B_{2^{k}}} \| \cL u(\cdot,x)\|_Y^{q_0} dx \right)^{1/q_0}
        \\
        &\quad+ N \sum_{k=0}^{\infty}2^{-k\sigma}
        \left( \aint_{B_{2^{k}}} \| \cL w(\cdot,x)\|_Y^{q_0} dx \right)^{1/q_0}
        \\
        &\quad+ N\sum_{k=0}^{\infty}2^{-k\sigma}
        \left(\aint_{B_{2^k}}\|f(\cdot,x)\|_Y^{q_0}dx\right)^{1/q_0}.
    \end{align*}
    Then \eqref{eq7211433} easily yields the desired estimate.
    The lemma is proved.
\end{proof}

\begin{lemma} \label{lem8101843}
    Let the assumptions of Lemma \ref{lem7211430} hold. Then for any $x_0\in\bR^d$, $R>0$, $\rho\in(0,1/4)$, and $\vartheta\in(0,\sigma_0/2)$, we have
    \begin{align} \label{eq7211441}
        &\aint_{B_{R}(x_0)}\aint_{B_{R}(x_0)}
        \left|\|\cL u(\cdot,x)\|_Y-\|\cL u(\cdot,y)\|_Y\right|dxdy \nonumber
        \\
        &\leq N\rho^\vartheta\sum_{k=0}^{\infty}2^{-k\sigma}
        \left(\aint_{B_{2^k\rho^{-1}R}(x_0)}\|\cL u(\cdot,x)\|_Y^{q_0}dx\right)^{1/q_0} \nonumber
        \\
        &\quad+N\rho^{-d/q_0}\sum_{k=0}^{\infty}2^{-k\sigma}
        \left(\aint_{B_{2^k\rho^{-1}R}(x_0)}\|f(\cdot,x)\|_Y^{q_0}dx\right)^{1/q_0}
    \end{align}
    and
    \begin{align} \label{eq7211442}
        &\aint_{B_{R}(x_0)}\aint_{B_{R}(x_0)}
        \left|\|\lambda u(\cdot,x)\|_Y-\|\lambda u(\cdot,y)\|_Y\right|dxdy \nonumber
        \\
        &\leq N\rho^\vartheta\sum_{k=0}^{\infty}2^{-k\sigma}
        \left(\aint_{B_{2^k\rho^{-1}R}(x_0)}\|\lambda u(\cdot,x)\|_Y^{q_0}dx\right)^{1/q_0} \nonumber
        \\
        &\quad+N\rho^{-d/q_0}\sum_{k=0}^{\infty}2^{-k\sigma}
        \left(\aint_{B_{2^k\rho^{-1}R}(x_0)}\|f(\cdot,x)\|_Y^{q_0}dx\right)^{1/q_0},
    \end{align}
    where $N=N(d,p,q_0,K_0,\Lambda,\nu,\tilde{\alpha},\tilde{\sigma},\vartheta)$.
\end{lemma}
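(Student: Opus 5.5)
This is the spatial counterpart of the temporal mean oscillation lemma. I would apply Lemma \ref{lem7211430} at the enlarged radius $\rho^{-1}R$ and split the oscillation of $\|\cL u(\cdot,x)\|_Y$ into a $v$-part and a $w$-part. As before, I only treat \eqref{eq7211441}, since \eqref{eq7211442} is identical with $\lambda u$ in place of $\cL u$.

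First, apply Lemma \ref{lem7211430} with $x_0$ and radius $\rho^{-1}R$. This gives $u=v+w$ with $w$ satisfying \eqref{eq7211431} on $B_{\rho^{-1}R}(x_0)$, and $v$ satisfying \eqref{eq7211432} on $B_{\rho^{-1}R/4}(x_0)$. Since $\rho<1/4$, the ball $B_R(x_0)$ is contained in $B_{\rho^{-1}R/4}(x_0)$. By the triangle inequality in $Y$,
\begin{equation*}
\left|\|\cL u(\cdot,x)\|_Y-\|\cL u(\cdot,y)\|_Y\right|\leq \|\cL v(\cdot,x)-\cL v(\cdot,y)\|_Y+\|\cL w(\cdot,x)\|_Y+\|\cL w(\cdot,y)\|_Y .
\end{equation*}
So the double average is bounded by the average of the $v$-differences plus $2\aint_{B_R(x_0)}\|\cL w(\cdot,x)\|_Y\,dx$.

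For the $v$-term, any $x,y\in B_R(x_0)$ satisfy $|x-y|<2R$. Hence
\begin{equation*}
\aint_{B_R(x_0)}\aint_{B_R(x_0)}\|\cL v(\cdot,x)-\cL v(\cdot,y)\|_Y\,dx\,dy\leq (2R)^\vartheta[\cL v]_{C^\vartheta(B_{\rho^{-1}R/4}(x_0);Y)}.
\end{equation*}
Writing $R^\vartheta=\rho^\vartheta(\rho^{-1}R)^\vartheta$ and invoking \eqref{eq7211432} at radius $\rho^{-1}R$ yields the first sum in \eqref{eq7211441} with the factor $\rho^\vartheta$. It also produces an $f$-sum with factor $\rho^\vartheta\leq\rho^{-d/q_0}$, which is absorbed into the second term.

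For the $w$-term, I enlarge the ball using H\"older's inequality:
\begin{equation*}
\aint_{B_R(x_0)}\|\cL w(\cdot,x)\|_Y\,dx\leq\left(\aint_{B_R(x_0)}\|\cL w\|_Y^{q_0}\right)^{1/q_0}\leq \rho^{-d/q_0}\left(\aint_{B_{\rho^{-1}R}(x_0)}\|\cL w\|_Y^{q_0}\right)^{1/q_0}.
\end{equation*}
By \eqref{eq7211431}, this is bounded by $N\rho^{-d/q_0}$ times the $k=0$ term of the $f$-sum. Combining the $v$- and $w$-estimates gives \eqref{eq7211441}.

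The only point needing care is the bookkeeping of radii, namely $B_R(x_0)\subset B_{\rho^{-1}R/4}(x_0)$ and the $2^{\vartheta}$ factor. Beyond that, the proof is a direct combination of Lemma \ref{lem7211430} with the triangle and H\"older inequalities, exactly as in the temporal lemma preceding the proof of Theorem \ref{thm_time}. The power $\rho^{-d/q_0}$ replaces the $\rho^{-1}$ there because the volume ratio of the balls is $\rho^{-d}$.
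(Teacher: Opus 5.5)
Your proposal is correct and follows the paper's proof essentially step by step. You apply Lemma \ref{lem7211430} at radius $\rho^{-1}R$, bound the $w$-part by H\"older's inequality on the enlarged ball (which produces $\rho^{-d/q_0}$) together with \eqref{eq7211431}, and bound the $v$-part by $R^\vartheta[\cL v]_{C^\vartheta(B_{\rho^{-1}R/4}(x_0);Y)}$ together with \eqref{eq7211432}. Your explicit pointwise bound $\bigl|\|\cL u(\cdot,x)\|_Y-\|\cL u(\cdot,y)\|_Y\bigr|\le\|\cL v(\cdot,x)-\cL v(\cdot,y)\|_Y+\|\cL w(\cdot,x)\|_Y+\|\cL w(\cdot,y)\|_Y$ is a slightly more careful rendering of the paper's closing ``triangle inequality'' step.
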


\begin{proof}
    By Lemma \ref{lem7211430}, there are $v,w\in \bm{\fH}_{q_0,p,\omega_1,0}^{\alpha,\sigma}(T)$ satisfying \eqref{eq7211431} and \eqref{eq7211432} with $\rho^{-1}R$ in place of $R$. Due to the similarity, we only prove \eqref{eq7211441}.

    For $w$, by the triangle inequality, H\"older's inequality, and \eqref{eq7211431},
    \begin{align*}
        &\aint_{B_{R}(x_0)}\aint_{B_{R}(x_0)}
        \left|\|\cL w(\cdot,x)\|_Y-\|\cL w(\cdot,y)\|_Y\right|dxdy 
        \\
        &\leq N\aint_{B_{R}(x_0)}\|\cL w(\cdot,x)\|_Ydx 
        \\
        &\leq N\rho^{-d/q_0}
        \left(\aint_{B_{\rho^{-1}R}(x_0)}\|\cL w(\cdot,x)\|_Y^{q_0}dx\right)^{1/q_0} 
        \\
        &\leq N\rho^{-d/q_0}
        \left(\aint_{B_{\rho^{-1}R}(x_0)}\|f(\cdot,x)\|_Y^{q_0}dx\right)^{1/q_0}.
    \end{align*}
    For $v$, by \eqref{eq7211432},
    \begin{align*}
        &\aint_{B_{R}(x_0)}\aint_{B_{R}(x_0)}
        \left|\|\cL v(\cdot,x)\|_Y-\|\cL v(\cdot,y)\|_Y\right|dxdy 
        \\
        &\leq NR^\vartheta[\cL v]_{C^\vartheta(B_{\rho^{-1}R/4}(x_0);Y)} 
        \\
        &\leq N\rho^\vartheta\sum_{k=0}^{\infty}2^{-k\sigma}
        \left(\aint_{B_{2^k\rho^{-1}R}(x_0)}\|\cL u(\cdot,x)\|_Y^{q_0}dx\right)^{1/q_0} 
        \\
        &\quad+N\rho^\vartheta\sum_{k=0}^{\infty}2^{-k\sigma}
        \left(\aint_{B_{2^k\rho^{-1}R}(x_0)}\|f(\cdot,x)\|_Y^{q_0}dx\right)^{1/q_0}.
    \end{align*}
    Then the triangle inequality yields \eqref{eq7211441}. The lemma is proved.
\end{proof}

\begin{proof}[Proof of Theorem \ref{thm_odd}]

$(i)$ Let us prove \eqref{est_odd}.
Note that it suffices to prove the estimate when $T<\infty$ and $u\in \bm{\fH}_{q,p,\omega,0}^{\alpha,\sigma}(T)$. 
Due to Lemma \ref{lem_dense} and Proposition \ref{lem_conti}, we may assume that $u\in C_c^\infty([0,T]\times \bR^d)$ and $u(0,x)=0$.

    By reverse H\"older's inequality (see e.g. \cite[Corollary 7.2.6]{G14}), there is $\gamma=\gamma(d,q,K_0)>0$ such that $q-\gamma>1$ and $\omega_2\in A_{q-\gamma}(\bR^d)$. Denote
    \begin{equation*}
        q_0:=\frac{q}{q-\gamma}\in(1,\infty),
    \end{equation*}
    which leads to
    \begin{equation} \label{eq7211443}
        \omega_2\in A_{q-\gamma}(\bR^d)=A_{q/q_0}(\bR^d).
    \end{equation}
    
    For $x_0\in\bR^d$, define the sharp function
    \begin{equation*}
        v^\sharp(x_0):=\sup_{B_r(x_1)\ni x_0}
        \aint_{B_r(x_1)}\aint_{B_r(x_1)}|v(x)-v(y)|dxdy.
    \end{equation*}
    By \eqref{eq8102022}, one can verify that $u\in \bm{\fH}_{q_0,p,\omega_1,0}^{\alpha,\sigma}(T)$ for any $q_0\in(1,\infty)$. We can therefore apply \eqref{eq7211441} with $x_1$ in place of $x_0$ to obtain
        \begin{align*}
        \left( \|\cL u(\cdot,\star)\|_Y \right)^\sharp(x_0)
        \leq N\rho^\vartheta \sM_{x,q_0}[\cL u](x_0) + N\rho^{-d/q_0} \sM_{x,q_0}[f](x_0),
    \end{align*}
    where $ Y:=L_{p,\omega_1}(T)$.
    By \eqref{eq7211443}, the weighted sharp function theorem and the weighted maximal function theorem yield
    \begin{align*}
        \|\cL u\|_{\bm{\fL}_{q,p,\omega}(T)} \leq N\rho^\vartheta\|\cL u\|_{\bm{\fL}_{q,p,\omega}(T)}
        +N\rho^{-d/q_0}\|f\|_{\bm{\fL}_{q,p,\omega}(T)}.
    \end{align*}
    Since $\|\cL u\|_{\bm{\fL}_{q,p,\omega}(T)}<\infty$ by the assumption that $u\in \bm{\fH}_{q,p,\omega,0}^{\alpha,\sigma}(T)$, by choosing $\rho\in(0,1/4)$ sufficiently small so that $N\rho^\vartheta<1/2$, we have
    \begin{equation} \label{eq7211445}
        \|\cL u\|_{\bm{\fL}_{q,p,\omega}(T)}
        \leq N\|f\|_{\bm{\fL}_{q,p,\omega}(T)}.
    \end{equation}
    Repeating the same argument with \eqref{eq7211442} gives
    \begin{equation} \label{eq7211446}
        \lambda\|u\|_{\bm{\fL}_{q,p,\omega}(T)}
        \leq N\|f\|_{\bm{\fL}_{q,p,\omega}(T)}.
    \end{equation}
    By \eqref{eqmainodd}, \eqref{eq7211445}, and \eqref{eq7211446}, we obtain \eqref{est_odd}.

$(ii)$ We consider the solvability. By Remark \ref{remCc} and Proposition \ref{lem_conti}, we only need to prove the result when $f\in C_c^\infty((0,T)\times\bR^d)$. By Proposition \ref{prop7210028}, there exists a solution $u\in \bm{\fH}_{q,p,\omega_1,0}^{\alpha,\sigma}(T)$ to \eqref{eqmainodd}.
    Let
\begin{equation*}
    \omega^{(n)}(t,x):= \omega_1(t)\cdot \min\{\omega_2(x),n\}.
\end{equation*}
Then by \eqref{eq7312315}, $[\omega^{(n)}]_{p,q}\leq N(K_0,q)$.
Moreover, $u$ is in the space $\bm{\fH}_{q,p,\omega^{(n)},0}^{\alpha,\sigma}(T)$. Thus, by \eqref{est_odd},
\begin{equation*}
    \|\partial_t^{\alpha}u\|_{\bm{\fL}_{q,p,\omega^{(n)}}(T)} + \|\cL u\|_{\bm{\fL}_{q,p,\omega^{(n)}}(T)} + \lambda\|u\|_{\bm{\fL}_{q,p,\omega^{(n)}}(T)} \leq N \|f\|_{\bm{\fL}_{q,p,\omega^{(n)}}(T)}.
\end{equation*}
Now \eqref{est_odd} follows by letting $n\to\infty$ and applying the monotone convergence theorem. Since $\|f\|_{\bm{\fL}_{q,p,\omega}(T)}<\infty$, we have $u\in \bm{\fH}_{q,p,\omega,0}^{\alpha,\sigma}(T)$ when $\lambda>0$. If $\lambda=0$, then necessarily $T<\infty$, and the odd-space version of \eqref{eq8110024} leads to $u \in \bm{\fL}_{q,p,\omega}(T)$.
The theorem is proved.
\end{proof}

\section{Proof of Theorem \ref{thm_main}} \label{sec_proof}

\begin{proof}[Proof of Theorem \ref{thm_main}]

When $p=q$, the assertions of Theorem \ref{thm_main} follow from Theorem \ref{thm_odd}. Thus, the desired result follows by Theorem \ref{thm_time}. The theorem is proved.
\end{proof}

\appendix

\section{Auxiliary results} \label{sec_appen}

We show that the minimum of two $A_p(\bR^d)$ weights still belongs to $A_p(\bR^d)$. We also refer the reader to \cite{RVV10}.

\begin{lemma}
Let $p\in(1,\infty)$ and let $\mu_1,\mu_2\in A_p(\mathbb{R}^d)$. 
Then $\mu:=\min\{\mu_1,\mu_2\}\in A_p(\mathbb{R}^d)$ and
\begin{equation} \label{eq7291856}
    [\mu]_{A_p}\leq N(p)\left([\mu_1]_{A_p} + [\mu_2]_{A_p}\right).
\end{equation}
\end{lemma}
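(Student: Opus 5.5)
We plan to verify the $A_p$ condition for $\mu=\min\{\mu_1,\mu_2\}$ ball by ball, using the splitting $\mu^{1/(1-p)}=\max\{\mu_1^{1/(1-p)},\mu_2^{1/(1-p)}\}$, which holds because $1/(1-p)<0$. Fix a ball $B$. The first factor in \eqref{eqAp} is controlled by either weight: since $\mu\le\mu_i$ for each $i$,
\begin{equation*}
\aint_B \mu\,dx \le \min\Bigl\{\aint_B\mu_1\,dx,\ \aint_B\mu_2\,dx\Bigr\}.
\end{equation*}
For the second factor we bound the maximum by the sum, $\mu^{1/(1-p)}\le \mu_1^{1/(1-p)}+\mu_2^{1/(1-p)}$. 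Combining this with the elementary inequality $(a+b)^{p-1}\le N(p)(a^{p-1}+b^{p-1})$, we obtain
\begin{equation*}
\Bigl(\aint_B \mu^{1/(1-p)}dx\Bigr)^{p-1}\le N(p)\Bigl[\Bigl(\aint_B \mu_1^{1/(1-p)}\Bigr)^{p-1}+\Bigl(\aint_B \mu_2^{1/(1-p)}\Bigr)^{p-1}\Bigr].
\end{equation*}

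Next we multiply the two bounds and treat the two resulting terms separately. In the term containing $\mu_1^{1/(1-p)}$, we bound $\aint_B\mu$ by $\aint_B\mu_1$, so the product is at most $[\mu_1]_{A_p}$. In the term containing $\mu_2^{1/(1-p)}$, we bound $\aint_B\mu$ by $\aint_B\mu_2$, so the product is at most $[\mu_2]_{A_p}$. Taking the supremum over all balls then yields \eqref{eq7291856}, with $N(p)=\max\{1,2^{p-2}\}$.

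The only point needing care is that the two terms must be paired with different upper bounds for $\aint_B\mu$. This is legitimate precisely because the bound on the first factor is a minimum, so we may choose whichever weight matches each term. There is no real obstacle beyond this. Local integrability of $\mu$ is immediate from $\mu\le\mu_1$, and if $\mu=0$ on a set of positive measure, then the $A_p$ quantities of $\mu_1$ or $\mu_2$ would already be infinite, which is excluded by the assumption that $\mu_1,\mu_2\in A_p(\bR^d)$. As a consequence, taking $\mu_2\equiv n$, which has $[n]_{A_p}=1$, gives the truncated-weight bound $[\min\{\omega_1,n\}]_{A_p}\le N(p)([\omega_1]_{A_p}+1)$ used in the proofs of the main theorems.
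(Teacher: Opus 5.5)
Your proof is correct and follows essentially the same route as the paper. The paper splits the ball into $\{\mu_1\le\mu_2\}$ and $\{\mu_1>\mu_2\}$, which amounts to your bounds $\int_B\mu\le\min_i\int_B\mu_i$ and $\mu^{1/(1-p)}\le\mu_1^{1/(1-p)}+\mu_2^{1/(1-p)}$, and then applies $(a+b)^{p-1}\le N(p)(a^{p-1}+b^{p-1})$; your version spells out the pairing of each term with the matching bound for $\aint_B\mu$, which the paper leaves implicit.
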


\begin{proof}
    Due to \eqref{eqAp}, it suffices to prove that for $r>0$ and $x_0\in \bR^d$,
    \begin{equation*}
        \left(\int_{B_r(x_0)} \mu(x) dx\right) \left(\int_{B_r(x_0)} (\mu(x))^{1/(1-p)} dx\right)^{p-1} \leq N|B_r(x_0)|^p.
    \end{equation*}
    Let $A:=\{x\in B_r(x_0):\mu_1(x)\leq \mu_2(x)\}$ and $B:=\{x\in B_r(x_0):\mu_1(x)> \mu_2(x)\}$. Then
    \begin{align*}
        &\left(\int_{B_r(x_0)} \mu(x) dx\right) \left(\int_{B_r(x_0)} (\mu(x))^{1/(1-p)} dx \right)^{p-1}
        \\
        &\leq \left(\int_{A} \mu_1(x) dx + \int_{B} \mu_2(x) dx\right) \left(\int_{A} (\mu_1(x))^{1/(1-p)} dx + \int_{B} (\mu_2(x))^{1/(1-p)} dx \right)^{p-1}
        \\
        &\leq N(p) \left(\int_{B_r(x_0)} \mu_1(x) dx\right) \left(\int_{B_r(x_0)} (\mu_1(x))^{1/(1-p)} dx \right)^{p-1}
        \\
        &\quad+ N(p) \left(\int_{B_r(x_0)} \mu_2(x) dx\right) \left(\int_{B_r(x_0)} (\mu_2(x))^{1/(1-p)} dx \right)^{p-1}
        \\
        &\leq N(p)\left([\mu_1]_{A_p} + [\mu_2]_{A_p}\right)|B_r(x_0)|^p.
    \end{align*}
    The lemma is proved.
\end{proof}

\begin{remark}
    Let $\omega\in A_p(\bR^d)$. Since $[n]_{A_p(\bR^d)}=1$ for any $n$, it follows from \eqref{eq7291856} that
    \begin{equation} \label{eq7312315}
        [\min\{\omega,n\}]_{A_p(\bR^d)} \leq N(p) \left( [\omega]_{A_p(\bR^d)} + 1 \right).
    \end{equation}
\end{remark}

Next, we record some properties of $I^\alpha=I_0^\alpha$ and $\partial_t^\alpha$. In particular, we establish the robustness of the constant in \eqref{eq7152146} as $\alpha\to1$.

\begin{lemma}
    Let $T\in(0,\infty)$. 

    (i) Let $\alpha\in(0,1)$, $u\in C^1([0,T])$, and $u(0)=0$. Then
    \begin{equation} \label{eq7152153}
        I^\alpha \partial_t^\alpha u =u.
    \end{equation}

    (ii) Let $\alpha,\beta\in(0,1)$ and $u\in L_1((0,T))$. Then we have
    \begin{equation} \label{eq7152131}
        I^\alpha I^\beta u = I^{\alpha+\beta}u, \quad (a.e.) \, t\in (0,T).
    \end{equation}

    (iii) For $\alpha\in(0,1)$ and $u\in L_p((0,T))$,
    \begin{equation} \label{eq7152115}
        \|I^\alpha u\|_{L_p((0,T))} \leq \frac{T^\alpha}{\Gamma(1+\alpha)}\|u\|_{L_p((0,T))}.
    \end{equation}

    (iv) Let $\tilde{\alpha}\in(0,1)$, $\alpha\in[\tilde{\alpha},1)$. Let $p\in(1,\infty)$ and $\tilde{p}\in(1,\infty]$ satisfy
    \begin{equation*}
        \tilde{p}>p, \quad \tilde{\alpha}-1/p>-1/\tilde{p}.
    \end{equation*}
    Then for $u\in L_p((0,T))$, we have
    \begin{equation} \label{eq7152146}
        \|I^\alpha u\|_{L_{\tilde{p}}((0,T))} \leq N(\tilde{\alpha},p,\tilde{p}) T^{\alpha-1/p+1/\tilde{p}}\|u\|_{L_p((0,T))}.
    \end{equation}
\end{lemma}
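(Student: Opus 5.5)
We prove the four parts in order, the first three being classical facts about Riemann--Liouville integrals and the last being the only one where uniformity in $\alpha$ requires care.

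For (ii), we use Fubini's theorem on $\{0<r<s<t\}$, which is justified since $u\in L_1$ and all kernels are nonnegative and integrable. This gives
\[
I^\alpha I^\beta u(t)=\frac{1}{\Gamma(\alpha)\Gamma(\beta)}\int_0^t u(r)\int_r^t (t-s)^{\alpha-1}(s-r)^{\beta-1}\,ds\,dr .
\]
The substitution $s=r+\theta(t-r)$ turns the inner integral into $(t-r)^{\alpha+\beta-1}B(\alpha,\beta)$. Since $B(\alpha,\beta)=\Gamma(\alpha)\Gamma(\beta)/\Gamma(\alpha+\beta)$, this yields \eqref{eq7152131}.

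For (i), with $u\in C^1$ and $u(0)=0$, an integration by parts gives $I^{1-\alpha}u=I^{1-\alpha}I^1u'=I^1 I^{1-\alpha}u'$ by (ii). Hence $\partial_t^\alpha u=D_t^\alpha u=I^{1-\alpha}u'$, and by (ii) again $I^\alpha\partial_t^\alpha u=I^1u'=u$, where the last step uses $u(0)=0$.

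For (iii), we apply Young's (or Minkowski's) inequality to the convolution with the kernel $k(s)=s^{\alpha-1}/\Gamma(\alpha)$ on $(0,T)$. Its $L_1$ norm equals $T^\alpha/(\alpha\Gamma(\alpha))=T^\alpha/\Gamma(1+\alpha)$, which is exactly the constant in \eqref{eq7152115}.

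For (iv), we first scale $t\mapsto Tt$ to reduce to $T=1$. The factor $T^{\alpha-1/p+1/\tilde p}$ comes out directly from $I^\alpha$ and the two $L_p$ and $L_{\tilde p}$ norms. Next we use Young's convolution inequality with $1+1/\tilde p=1/r+1/p$, where $r\in[1,\infty)$ because $\tilde p>p$. We extend $u$ by zero and restrict the kernel $k$ to $(0,1)$; this is allowed since only $s\in(0,1)$ matters for $t\in(0,1)$. It then suffices to bound
\[
\|k\|_{L_r(0,1)}=\frac{1}{\Gamma(\alpha)}\big((\alpha-1)r+1\big)^{-1/r}.
\]
The exponent condition is $(\alpha-1)r+1>0$, i.e. $\alpha>1-1/r=1/p-1/\tilde p$. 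This holds uniformly for $\alpha\ge\tilde\alpha$ by the hypothesis $\tilde\alpha-1/p>-1/\tilde p$: we get $(\alpha-1)r+1\ge(\tilde\alpha-1)r+1>0$. In addition, $1/\Gamma(\alpha)$ is bounded on $[\tilde\alpha,1)$. Together these give a constant $N(\tilde\alpha,p,\tilde p)$ that stays bounded as $\alpha\to1$. The case $\tilde p=\infty$ is the same argument with Hölder's inequality, $r=p'$.

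The main, if modest, obstacle is this last uniformity check. One must verify that the integrability threshold for the kernel depends only on the lower bound $\tilde\alpha$, and that no factor such as $1/(1-\alpha)$ appears, which would blow up as $\alpha\to1$. The computation above shows that neither problem arises.
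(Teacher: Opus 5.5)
Your proof is correct. For (iii) it is the same as the paper's: Young's inequality with $\|k_\alpha\|_{L_1}=T^\alpha/\Gamma(1+\alpha)$.

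The paper does not prove (i) and (ii); it cites the literature for them. Your Fubini/Beta-function argument for (ii) works for all positive orders, not only $\alpha,\beta\in(0,1)$. You need this in (i), where you use the semigroup property with orders $1$ and $1-\alpha$.

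The real difference is in (iv). You apply Young's inequality directly to $k_\alpha$ and check by hand that
$\|k_\alpha\|_{L_r(0,1)}=\Gamma(\alpha)^{-1}\bigl((\alpha-1)r+1\bigr)^{-1/r}\le\bigl((\tilde\alpha-1)r+1\bigr)^{-1/r}$ for $\alpha\ge\tilde\alpha$.

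The paper instead factors $I^\alpha=I^{\tilde\alpha}I^{\alpha-\tilde\alpha}$ using (ii). It then applies Young's inequality only to the fixed kernel $k_{\tilde\alpha}$ in $L_q$, with $1/q=1/\tilde p-1/p+1>1-\tilde\alpha$. The remaining factor $I^{\alpha-\tilde\alpha}$ is handled by (iii), whose constant $T^{\alpha-\tilde\alpha}/\Gamma(1+\alpha-\tilde\alpha)$ is clearly bounded.

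In the paper's route, uniformity in $\alpha$ is built in. It never computes an $\alpha$-dependent kernel norm, and it reuses (ii) and (iii). Your route is more self-contained, avoids the semigroup property in (iv), and gives an explicit constant. Both give the same power $T^{\alpha-1/p+1/\tilde p}$.

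A minor point: $r>1$ comes from $\tilde p>p$, and $r<\infty$ comes from $p>1$.
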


\begin{proof}
    For $(i)$ and $(ii)$, we refer the reader to \cite[Lemma A.4]{DK19} and \cite[Section 2.3]{SKM87}.
    
    Now we consider $(iii)$. Note that for $t\in (0,T)$,
    \begin{equation*}
        I^\alpha u(t) = (k_{\alpha}*u1_{(0,T)})(t),
    \end{equation*}
    where $k_{\alpha}(t) := \frac{t^{\alpha-1}}{\Gamma(\alpha)}1_{(0,T)}(t)$. Thus, by Young’s convolution inequality,
    \begin{align*}
        \|I^\alpha u\|_{L_p((0,T))} \leq \|k_\alpha\|_{L_1(\bR)} \|u\|_{L_p((0,T))}.
    \end{align*}
    Since
    \begin{align*}
        \|k_{\alpha}\|_{L_1(\bR)} = \frac{1}{\Gamma(\alpha)} \int_0^T s^{\alpha-1} ds = \frac{T^\alpha}{\alpha\Gamma(\alpha)} = \frac{T^\alpha}{\Gamma(1+\alpha)},
    \end{align*}
    we obtain \eqref{eq7152115}.

    Lastly, we prove $(iv)$. By \eqref{eq7152131},
    \begin{equation*}
        I^\alpha u = I^{\tilde{\alpha}}I^{\alpha-\tilde{\alpha}}u = k_{\tilde{\alpha}}*(I^{\alpha-\tilde{\alpha}}u).
    \end{equation*}
    Let
    \begin{equation} \label{eq8131056}
        \frac{1}{q} := \frac{1}{\tilde{p}}-\frac{1}{p} +1 > 1-\tilde{\alpha}.
    \end{equation}
    Then by Young's integral inequality,
    \begin{align} \label{eq8131103}
        \|I^\alpha u\|_{L_{\tilde{p}}((0,T))} \leq \|k_{\tilde{\alpha}}\|_{L_{q}((0,T))} \|I^{\alpha-\tilde{\alpha}}u\|_{L_{p}((0,T))}.
    \end{align}
    Here, due to \eqref{eq8131056},
    \begin{align*}
        \|k_{\tilde{\alpha}}\|_{L_{q}((0,T))}^q = N(\tilde{\alpha})\int_0^T t^{(\tilde{\alpha}-1)q} dt \leq N(\tilde{\alpha},p,\tilde{p}) T^{(\tilde{\alpha}-1)q+1}.
    \end{align*}
    This together with \eqref{eq7152115} and \eqref{eq8131103} yields \eqref{eq7152146}. The proof is completed.
\end{proof}

Next, we present an $L_{q,\omega_2}(\bR^d)$-valued analogue of \cite[Lemmas A.5 and A.6]{DK19}.

\begin{lemma}
    Let $\tilde{\alpha}\in(0,1)$, $\alpha\in[\tilde{\alpha},1)$, $\sigma\in(0,2)$, $T\in(0,\infty)$, $q\in(1,\infty)$, $K_0>0$, and $[\omega_2]_{A_q(\bR^d)}\leq K_0$. Let $p\in(1,\infty)$ and $\tilde{p}\in(1,\infty]$ satisfy
    \begin{equation*}
        \tilde{p}>p, \quad \tilde{\alpha}-1/p>-1/\tilde{p}.
    \end{equation*}
    Then for $u\in \bH_{p,q,\omega_2,0}^{\alpha,\sigma}(T)$, we have
    \begin{equation} \label{eq7151540}
        \|u\|_{\bL_{\tilde{p},q,\omega_2}(T)} \leq N(\tilde{\alpha},p,\tilde{p}) T^{\alpha-1/p+1/\tilde{p}} \|\partial_t^\alpha u\|_{\bL_{p,q,\omega_2}(T)}.
    \end{equation}
\end{lemma}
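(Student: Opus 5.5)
The plan is to reduce \eqref{eq7151540} to the scalar estimate \eqref{eq7152146} via the representation $u=I^\alpha\partial_t^\alpha u$, applied pointwise in $x$ and combined with Minkowski's inequality in the $X$-valued setting, $X:=L_{q,\omega_2}(\bR^d)$. First consider smooth $u$: by Lemma \ref{lem_dense}, there are $u_n\in C_c^\infty([0,T]\times\bR^d)$ with $u_n(0,\cdot)=0$ and $u_n\to u$ in $\bH_{p,q,\omega_2,0}^{\alpha,\sigma}(T)$. For such $u_n$ and each fixed $x$, $t\mapsto u_n(t,x)$ lies in $C^1([0,T])$ and vanishes at $0$. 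Then \eqref{eq7152153} gives
\begin{equation*}
u_n(t,x)=I^\alpha\partial_t^\alpha u_n(t,x)=\frac{1}{\Gamma(\alpha)}\int_0^t (t-s)^{\alpha-1} g_n(s,x)\,ds, \qquad g_n:=\partial_t^\alpha u_n .
\end{equation*}

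Next, take the $X$-norm in $x$. By Minkowski's integral inequality,
\begin{equation*}
\|u_n(t,\cdot)\|_X\le \frac{1}{\Gamma(\alpha)}\int_0^t (t-s)^{\alpha-1}\|g_n(s,\cdot)\|_X\,ds = I^\alpha\big(\|g_n(\star,\cdot)\|_X\big)(t),
\end{equation*}
where the kernel is nonnegative. The scalar function $G_n(s):=\|g_n(s,\cdot)\|_X$ belongs to $L_p((0,T))$, with $\|G_n\|_{L_p((0,T))}=\|\partial_t^\alpha u_n\|_{\bL_{p,q,\omega_2}(T)}$. Applying \eqref{eq7152146} to $G_n$, with the same $\tilde\alpha,p,\tilde p$ (the hypotheses coincide), gives
\begin{equation*}
\|u_n\|_{\bL_{\tilde p,q,\omega_2}(T)}\le \|I^\alpha G_n\|_{L_{\tilde p}((0,T))}\le N(\tilde\alpha,p,\tilde p)T^{\alpha-1/p+1/\tilde p}\|\partial_t^\alpha u_n\|_{\bL_{p,q,\omega_2}(T)} .
\end{equation*}
The case $\tilde p=\infty$ is covered identically.

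Finally, pass to the limit. Since $\partial_t^\alpha u_n\to\partial_t^\alpha u$ in $\bL_{p,q,\omega_2}(T)$, the right-hand side converges. The inequality applied to the differences $u_n-u_m$ shows that $(u_n)$ is Cauchy in $\bL_{\tilde p,q,\omega_2}(T)$. Its limit agrees with $u$, because $u_n\to u$ in $\bL_{p,q,\omega_2}(T)$ and a subsequence converges a.e. in $(t,x)$; alternatively, Fatou's lemma gives this directly. This yields \eqref{eq7151540}.

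The only step needing care is the density/identification. Lemma \ref{lem_dense} is stated for general weights and in particular covers $\omega=\omega_2$. One must also confirm that the limit in $L_{\tilde p}$ coincides with $u$ a.e., which follows from a.e. convergence along a subsequence together with Fatou's lemma. Otherwise the argument is a direct vector-valued lift of the scalar bound.
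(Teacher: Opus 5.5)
Your proposal is correct and follows essentially the same route as the paper. Both reduce to smooth $u$ with $u(0,\cdot)=0$ via Lemma \ref{lem_dense}, write $u=I^\alpha\partial_t^\alpha u$ by \eqref{eq7152153}, and use Minkowski's inequality to dominate $\|u(t,\cdot)\|_{L_{q,\omega_2}(\bR^d)}$ by the scalar fractional integral of $\|\partial_t^\alpha u(\star,\cdot)\|_{L_{q,\omega_2}(\bR^d)}$ before concluding from \eqref{eq7152146}; your explicit limiting argument (Cauchy in $\bL_{\tilde p,q,\omega_2}(T)$ and identification of the limit) just spells out the density step the paper leaves implicit.
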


\begin{proof}
    Due to Lemma \ref{lem_dense}, it suffices to prove \eqref{eq7151540} for $u\in C_c^\infty([0,T]\times\bR^d)$ such that $u(0,x)=0$.

    By \eqref{eq7152153}, it suffices to prove
    \begin{equation} \label{eq7152154}
        \|I^\alpha u\|_{\bL_{\tilde{p},q,\omega_2}(T)} \leq NT^{\alpha-1/p+1/\tilde{p}} \|u\|_{\bL_{p,q,\omega_2}(T)}.
    \end{equation}
    By the Minkowski inequality,
    \begin{align*}
        \|I^\alpha u(t,\cdot)\|_{L_{q}(\bR^d,\omega_2dx)} &\leq \frac{1}{\Gamma(\alpha)} \int_{0}^{t} (t-s)^{\alpha-1} \|u(s,\cdot)\|_{L_{q}(\bR^d,\omega_2dx)} ds 
        \\
        &= I^{\alpha} (\|u(\star,\cdot)\|_{L_{q}(\bR^d,\omega_2dx)})(t),
    \end{align*}
    which is the Riemann-Liouville fractional integral of a real valued function $t\to\|u(t,\cdot)\|_{L_{q}(\bR^d,\omega_2dx)}$.
    Hence, \eqref{eq7152154} follows from \eqref{eq7152146}.
    The lemma is proved.
\end{proof}

Next, we derive an $L_{q,\omega_2}(\bR^d)$-valued version of the robust H\"older continuity estimate by repeating the proof of \cite[Lemma A.14]{DK19}.

\begin{lemma}
Let $d\geq1$, $p\in(1,\infty)$, $\tilde{\alpha}\in(1/p,1)$, $\alpha\in[\tilde{\alpha},1)$, $\sigma\in(0,2)$, $T\in(0,\infty)$, $q\in(1,\infty)$, $K_0>0$, and $[\omega_2]_{A_q(\bR^d)}\leq K_0$. Assume that $u\in \bH_{p,q,\omega_2,0}^{\alpha,\sigma}(T)$.
Then we have
\begin{equation} \label{eq7181421}
    \|u(t_2) - u(t_1)\|_{L_{q,\omega_2}(\bR^d)} \leq N(\tilde{\alpha},p) (t_2 - t_1)^{\alpha-1/p} \|\partial_t^\alpha u\|_{\bL_{p,q,\omega_2}(T)}
\end{equation}
for $0 \leq t_1 < t_2 \leq T$.
\end{lemma}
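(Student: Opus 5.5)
We will follow the scalar proof of \cite[Lemma A.14]{DK19}, lifted to $X:=L_{q,\omega_2}(\bR^d)$-valued functions by the Minkowski inequality, exactly as in the proof of \eqref{eq7151540}. By Lemma \ref{lem_dense} we may assume $u\in C_c^\infty([0,T]\times\bR^d)$ with $u(0,\cdot)=0$; both sides of \eqref{eq7181421} are continuous with respect to the $\bH_{p,q,\omega_2}^{\alpha,\sigma}(T)$ norm. The left side is controlled through \eqref{eq7151540} with $\tilde p=\infty$, which is admissible because $\tilde\alpha>1/p$. Then \eqref{eq7152153} gives the representation $u=I^\alpha g$ with $g:=\partial_t^\alpha u$, that is,
\begin{equation*}
u(t)=\frac{1}{\Gamma(\alpha)}\int_0^t (t-s)^{\alpha-1}g(s)\,ds \quad\text{in } X.
\end{equation*}

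Fix $0\le t_1<t_2\le T$ and set $h:=t_2-t_1$. We split
\begin{equation*}
u(t_2)-u(t_1)=\frac{1}{\Gamma(\alpha)}\int_{t_1}^{t_2}(t_2-s)^{\alpha-1}g(s)\,ds+\frac{1}{\Gamma(\alpha)}\int_0^{t_1}\bigl[(t_2-s)^{\alpha-1}-(t_1-s)^{\alpha-1}\bigr]g(s)\,ds=:J_1+J_2.
\end{equation*}
We take $X$-norms inside the integrals by Minkowski and apply H\"older in $s$ with exponents $p,p'$. For $J_1$ this gives
\begin{equation*}
\|J_1\|_X\le \Gamma(\alpha)^{-1}\bigl((\alpha-1)p'+1\bigr)^{-1/p'}h^{\alpha-1/p}\|g\|_{\bL_{p,q,\omega_2}(T)}.
\end{equation*}
Here $(\alpha-1)p'+1=p'(\alpha-1/p)\ge p'(\tilde\alpha-1/p)>0$, so the constant depends only on $(\tilde\alpha,p)$. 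For $J_2$, we substitute $s=t_1-h\tau$ to get
\begin{equation*}
\|J_2\|_X\le \Gamma(\alpha)^{-1}h^{\alpha-1/p}\Bigl(\int_0^\infty\bigl|\tau^{\alpha-1}-(1+\tau)^{\alpha-1}\bigr|^{p'}d\tau\Bigr)^{1/p'}\|g\|_{\bL_{p,q,\omega_2}(T)}.
\end{equation*}

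The main obstacle is robustness: the constant must depend only on $(\tilde\alpha,p)$ and not blow up as $\alpha\to1$. This matters for $J_2$ in particular. On $\tau\in(0,1)$ we bound the integrand by $\tau^{(\alpha-1)p'}$, which is integrable with a uniform bound since $(1-\alpha)p'\le(1-\tilde\alpha)p'<1$. On $\tau>1$ we use the mean value theorem, $|\tau^{\alpha-1}-(1+\tau)^{\alpha-1}|\le(1-\alpha)\tau^{\alpha-2}\le\tau^{\tilde\alpha-2}$, and $(\tilde\alpha-2)p'<-1$. This gives a bound uniform in $\alpha\in[\tilde\alpha,1)$. Finally, $1/\Gamma(\alpha)$ is bounded on $[\tilde\alpha,1]$, which yields \eqref{eq7181421} with $N=N(\tilde\alpha,p)$.
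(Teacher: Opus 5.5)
Your argument is essentially the paper's proof. You use the same split into the piece over $(t_1,t_2)$ and the kernel-difference piece over $(0,t_1)$, the Minkowski inequality to pass to $L_{q,\omega_2}(\bR^d)$-norms, H\"older's inequality in $s$, and you get robustness from the factor $(1-\alpha)$ supplied by the mean value theorem. Your substitution $s=t_1-h\tau$ with the split at $\tau=1$ is the rescaled form of the paper's case distinction ($2t_1\le t_2$ versus $2t_1>t_2$, split at $s=2t_1-t_2$), and it handles both cases at once. Your use of \eqref{eq7151540} with $\tilde p=\infty$ to justify the density reduction is a detail the paper leaves implicit.

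One inequality is wrong as written. For $\tau>1$ and $\alpha\ge\tilde\alpha$ you have $\tau^{\alpha-2}\ge\tau^{\tilde\alpha-2}$, not $\le$. So the claim $(1-\alpha)\tau^{\alpha-2}\le\tau^{\tilde\alpha-2}$ fails for large $\tau$ whenever $\alpha>\tilde\alpha$, since it amounts to $(1-\alpha)\tau^{\alpha-\tilde\alpha}\le 1$. The fix is immediate. Because $\alpha-2<-1$, you have $(1-\alpha)\tau^{\alpha-2}\le\tau^{-1}$ on $(1,\infty)$, and $\int_1^\infty\tau^{-p'}\,d\tau=p-1$ is uniform in $\alpha$. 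In fact the tail contributes at most $(1-\alpha)^{p'}(p-1)$. With this correction your proof is complete.
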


\begin{proof}
Due to Lemma \ref{lem_dense}, we may assume that $u\in C_c^\infty([0,T]\times\bR^d)$ and $u(0,x)=0$.
By \eqref{eq7152153},
\begin{equation*}
    u(t_2) - u(t_1) = (I^\alpha \partial_t^\alpha u)(t_2) - (I^\alpha \partial_t^\alpha u)(t_1),
\end{equation*}
and thus
\begin{align} \label{eq7181417}
\Gamma(\alpha) \left(u(t_2) - u(t_1)\right) &= \int_0^{t_1} (t_2-s)^{\alpha-1} \partial_t^\alpha u(s) ds - \int_0^{t_1} (t_1 - s)^{\alpha-1}\partial_t^\alpha u(s) ds \nonumber
\\
&\quad+ \int_{t_1}^{t_2} (t_2-s)^{\alpha-1} \partial_t^\alpha u(s) ds \nonumber
\\
&:= I_1+I_2+I_3.
\end{align}
By the Minkowski inequality and H\"older's inequality,
\begin{align} \label{eq7181418}
&\|I_1 + I_2\|_{L_{q,\omega_2}(\bR^d)} \nonumber
\\
&\leq \int_0^{t_1} \left| (t_2-s)^{\alpha-1} - (t_1-s)^{\alpha-1} \right| \|\partial_t^\alpha u(s,\cdot)\|_{L_{q,\omega_2}(\bR^d)} ds \nonumber
\\
&\leq J(t_1,t_2)^{(p-1)/p} \left(\int_0^{t_1} \|\partial_t^\alpha u(s,\cdot)\|_{L_{q,\omega_2}(\bR^d)}^p ds\right)^{1/p},
\end{align}
where
\begin{align*}
J(t_1,t_2) := \int_0^{t_1} \left| (t_2-s)^{\alpha-1} - (t_1-s)^{\alpha-1} \right|^{p/(p-1)} ds.
\end{align*}
To estimate $J$, we consider two cases.
When $2 t_1 \leq t_2$, since $(\alpha-1)p/(p-1)>-1$ and $\alpha\geq\tilde{\alpha}$,
\begin{align*}
    J \leq \int_0^{t_1} (t_1-s)^{(\alpha-1)p/(p-1)} ds &\leq \frac{1}{(\alpha-1)p/(p-1)+1} t_1^{(\alpha-1)p/(p-1)+1} 
    \\
    &\leq N(\tilde{\alpha},p)(t_2 - t_1)^{(\alpha-1)p/(p-1)+1}.
\end{align*}
When $2t_1 > t_2$,
\begin{align*}
J &= \int_0^{2t_1-t_2} \cdots + \int_{2t_1-t_2}^{t_1} \cdots
\\
&\leq (1-\alpha)^{p/(p-1)} (t_2-t_1)^{p/(p-1)} \int_0^{2t_1-t_2}  (t_1-s)^{(\alpha-2)p/(p-1)} ds 
\\
&\quad+ \int_{2t_1-t_2}^{t_1} (t_1-s)^{(\alpha-1)\frac{p}{p-1}} ds
\\
&= N(\tilde{\alpha},p) (t_2-t_1)^{\frac{p}{p-1}} \left[ (t_2-t_1)^{(\alpha-2)\frac{p}{p-1} + 1} - t_1^{(\alpha-2)\frac{p}{p-1} + 1}\right] 
\\
&\quad+ N(\tilde{\alpha},p) (t_2-t_1)^{(\alpha-1)\frac{p}{p-1} + 1}
\\
&\leq N(\tilde{\alpha},p) (t_2-t_1)^{(\alpha-1)\frac{p}{p-1} + 1}.
\end{align*}
Thus, by \eqref{eq7181418}, we see that
\begin{equation} \label{eq7181419}
    \|I_1 + I_2\|_{L_{q,\omega_2}(\bR^d)} \leq N(\tilde{\alpha},p) (t_2 - t_1)^{\alpha-1/p} \|\partial_t^\alpha u\|_{\bL_{p,q,\omega_2}(T)}.
\end{equation}
For $I_3$, again by the Minkowski inequality and H\"older's inequality,
\begin{align} \label{eq7181420}
    \|I_3\|_{L_{q,\omega_2}(\bR^d)} &\leq \left(\int_{t_1}^{t_2} (t_2-s)^{(\alpha-1)\frac{p}{p-1}}  ds\right)^{\frac{p-1}{p}} \|\partial_t^\alpha u\|_{\bL_{p,q,\omega_2}(T)} \nonumber
    \\
    &\leq N(\tilde{\alpha},p) (t_2-t_1)^{\alpha-1/p} \|\partial_t^\alpha u\|_{\bL_{p,q,\omega_2}(T)}.
\end{align}
Therefore, combining \eqref{eq7181417}, \eqref{eq7181419},  and \eqref{eq7181420},
we obtain
\begin{equation*}
        \Gamma(\alpha)\|u(t_2) - u(t_1)\|_{L_{q,\omega_2}(\bR^d)} \leq N(\tilde{\alpha},p) (t_2 - t_1)^{\alpha-1/p} \|\partial_t^\alpha u\|_{\bL_{p,q,\omega_2}(T)}.
\end{equation*}
Since $\Gamma(\alpha)>\Gamma(1)=1$, we get \eqref{eq7181421}.
The proof is completed.
\end{proof}

For $\beta\in\bR$, we define the weighted Bessel potential space by
\begin{equation*}
    H_{q,\omega_2}^\beta(\bR^d):=\big\{u\in L_{q,\omega_2}(\bR^d): (1-\Delta)^{\beta/2}u\in L_{q,\omega_2}(\bR^d)\big\}.
\end{equation*}

\begin{lemma}
Let $\sigma\in(1,2)$, $p,q\in(1,\infty)$, $T\in(0,\infty)$, $K_0>0$, and $[\omega]_{p,q}\leq K_0$. Then for any $u\in C_c^\infty([0,T]\times \bR^d)$ and $\varepsilon>0$,
    \begin{equation} \label{eq7191852}
    \|\nabla u\|_{\bm{\fL}_{q,p,\omega}(T)} \leq N(d,p,q,K_0)\varepsilon^{1/(1-\sigma)}\|u\|_{\bm{\fL}_{q,p,\omega}(T)} + \varepsilon \|(-\Delta)^{\sigma/2}u\|_{\bm{\fL}_{q,p,\omega}(T)}.
\end{equation}
\end{lemma}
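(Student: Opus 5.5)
The plan is to prove the inequality first for fixed $t$ in $L_{q,\omega_2}(\bR^d)$ with the weight depending only on space, and then lift it to the odd mixed-norm space by extrapolation. The case $p=q$ is trivial once the pointwise-in-time version is known, because then $\bm{\fL}_{p,p,\omega}(T)=\bL_{p,p,\omega}(T)$.

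\textbf{Step 1 (homogeneous, unweighted-in-time inequality).} For each fixed $t$, I would show
\begin{equation*}
\|\nabla u(t,\cdot)\|_{L_{q,\omega_2}(\bR^d)}\leq N\|u(t,\cdot)\|_{L_{q,\omega_2}(\bR^d)}^{1-1/\sigma}\|(-\Delta)^{\sigma/2}u(t,\cdot)\|_{L_{q,\omega_2}(\bR^d)}^{1/\sigma}.
\end{equation*}
The route is the weighted interpolation inequality already used in \eqref{eq7292333} (from \cite{SV20}), or equivalently a Mikhlin multiplier argument. Since $\omega_2\in A_q$, Fourier multipliers of Mikhlin type are bounded on $L_{q,\omega_2}$ with constant depending on $d,q,K_0$. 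Split $u=P_{\le R}u+P_{>R}u$ by a smooth Littlewood--Paley cutoff at frequency $R$. For the low part, $\nabla P_{\le R}u$ has multiplier $i\xi\chi(\xi/R)$, which gives $\|\nabla P_{\le R}u\|\le NR\|u\|$. For the high part, write $\nabla P_{>R}u$ as the multiplier $i\xi|\xi|^{-\sigma}(1-\chi(\xi/R))$ applied to $(-\Delta)^{\sigma/2}u$. After rescaling, this multiplier satisfies Mikhlin bounds of size $R^{1-\sigma}$, because $\sigma>1$ makes $|\xi|^{1-\sigma}$ decay away from the origin. Hence
\begin{equation*}
\|\nabla u\|\le NR\|u\|+NR^{1-\sigma}\|(-\Delta)^{\sigma/2}u\|.
\end{equation*}

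\textbf{Step 2 (choosing $R$ to get the $\varepsilon$-form).} Choose $R$ so that $NR^{1-\sigma}=\varepsilon$, that is, $R=(\varepsilon/N)^{1/(1-\sigma)}$. Then $NR=N'\varepsilon^{1/(1-\sigma)}$, and the constant depends only on $d,q,K_0$ and a uniform upper bound for $\sigma$-dependent multiplier constants. This gives the desired inequality in $L_{q,\omega_2}(\bR^d)$ for each $t$.

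\textbf{Step 3 (passing to the mixed-norm spaces).} With $p=q$, raise the Step 2 inequality to the $p$-th power, multiply by $\omega_1(t)$, and integrate in $t$. This yields the estimate in $\bL_{p,p,\omega}(T)=\bm{\fL}_{p,p,\omega}(T)$ for every product weight with $[\omega]_{p,p}\le K_0$. For general $p\neq q$, I would apply the extrapolation theorem \cite[Theorem 2.5]{DK18}, exactly as in Proposition \ref{lem_conti}(ii) and \eqref{eq8102022}, to the pair of functions $(|\nabla u|,\ N\varepsilon^{1/(1-\sigma)}|u|+\varepsilon|(-\Delta)^{\sigma/2}u|)$. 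The extrapolation step has one subtlety: the right-hand side must be treated as a single function for a fixed $\varepsilon$. Extrapolation only needs the inequality $\|F\|\le N\|G\|$ for all weights in the class, so it does not see the split of the right-hand side. Since the target constant in front of $\varepsilon\|(-\Delta)^{\sigma/2}u\|$ is $1$, not $N$, I would absorb the extrapolation constant by replacing $\varepsilon$ with $\varepsilon/N$ beforehand. This only changes the first constant by a power of $N$.

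\textbf{Main obstacle.} I expect the main difficulty to be keeping the multiplier bounds uniform, so that the constant is independent of $\sigma$ as stated, $N=N(d,p,q,K_0)$. The symbol $\xi|\xi|^{-\sigma}(1-\chi)$ has derivatives bounded uniformly for $\sigma\in(1,2)$, but near $\sigma=1$ the decay gain $R^{1-\sigma}$ degenerates. That affects only the choice of $R$, not the Mikhlin constants, so I expect uniformity to hold. If this fails, the fallback is to allow the constant to depend on $\sigma$.
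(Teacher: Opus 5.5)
Your architecture is the same as the paper's: a fixed-time inequality in $L_{q,\omega_2}(\bR^d)$, integration against $\omega_1$ when $p=q$, then extrapolation. The fixed-time step, however, is a genuinely different argument.

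The paper bounds $\|\nabla u\|_{L_{q,\omega_2}}$ by the weighted Bessel norm $\|u\|_{H^1_{q,\omega_2}}$ \cite{DJK23}. It then uses the complex interpolation identity $H^1_{q,\omega_2}=[L_{q,\omega_2},H^\sigma_{q,\omega_2}]_{1/\sigma}$ \cite{D23} and the $\sigma$-uniform equivalence $\|u\|_{H^\sigma_{q,\omega_2}}\approx\|u\|_{L_{q,\omega_2}}+\|(-\Delta)^{\sigma/2}u\|_{L_{q,\omega_2}}$, and finishes with Young's inequality.

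Your Littlewood--Paley splitting with weighted Mikhlin bounds is correct. The symbol $\xi|\xi|^{-\sigma}(1-\chi(\xi))$ has Mikhlin constants bounded uniformly for $\sigma\in[1,2]$, and dilation gives $\|\nabla u\|\le N(R\|u\|+R^{1-\sigma}\|(-\Delta)^{\sigma/2}u\|)$ for every $R>0$, with $N=N(d,q,K_0)$. This is more self-contained and scale-invariant from the outset. The inhomogeneous $H^\sigma$ route yields the $\varepsilon$-form directly only for $\varepsilon\le1$, since it must absorb $N\varepsilon\|u\|$ into $N\varepsilon^{1/(1-\sigma)}\|u\|$, and otherwise needs a scaling step. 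In exchange, the paper's route rests entirely on quoted results. It is also cleaner to extrapolate the $R$-form itself, with $G=R|u|+R^{1-\sigma}|(-\Delta)^{\sigma/2}u|$, and fix $R$ only once at the end.

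The one incorrect point is your expectation that the constant is independent of $\sigma$. Solving $NR^{1-\sigma}=\varepsilon$ gives $NR=N^{\sigma/(\sigma-1)}\varepsilon^{1/(1-\sigma)}$. Your rescaling $\varepsilon\mapsto\varepsilon/N$ after extrapolation costs a further factor $N^{1/(\sigma-1)}$. So the choice of $R$ does enter the constant, and it blows up as $\sigma\to1^+$. The paper's proof has the same feature: ``choosing $\varepsilon$ appropriately again'' is exactly this rescaling.

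It cannot be avoided. Optimizing over $\varepsilon$, the inequality with coefficient exactly $1$ on the $\varepsilon$-term is equivalent to $\|\nabla u\|\le\sigma\bigl(N/(\sigma-1)\bigr)^{1-1/\sigma}\|u\|^{1-1/\sigma}\|(-\Delta)^{\sigma/2}u\|^{1/\sigma}$. For fixed $N$ this constant tends to $1$ as $\sigma\to1^+$. Take $d=1$, $\omega\equiv1$, $p=q\neq2$ and $u=\phi(t)v(x)$, and let $\sigma\to1$. This would give $\|v'\|_{L_q}\le\|(-\Delta)^{1/2}v\|_{L_q}$, i.e.\ the Hilbert transform would have norm at most $1$ on $L_q(\bR)$, which is false.

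So your fallback is the right conclusion: $N$ depends additionally on a lower bound $\sigma_0>1$ for $\sigma$, and it stays bounded as $\sigma\to2$. This is harmless for the paper, because $\tilde\sigma=\sigma_0$ in \eqref{sigma} when $\sigma>1$. Moreover, the application in the proof of \eqref{eq7031124} already produces the $\sigma_0$-dependent factor $\varepsilon^{3/(1-\sigma)}$ with $\varepsilon$ fixed.
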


\begin{proof}
    First, we assume that $u(t,x)=u(x)$ is independent of $t$. Then it suffices to prove
    \begin{equation} \label{eq7191919}
        \|\nabla u\|_{L_{q,\omega_2}(\bR^d)} \leq N(d,q,K_0)\varepsilon^{1/(1-\sigma)}\|u\|_{L_{q,\omega_2}(\bR^d)} + \varepsilon \|(-\Delta)^{\sigma/2}u\|_{L_{q,\omega_2}(\bR^d)}.
    \end{equation}
    By \cite[Lemmas 3.4 and 3.5]{DJK23},
    \begin{equation*}
        \|\nabla u\|_{L_{q,\omega_2}(\bR^d)} \leq N\|u\|_{H_{q,\omega_2}^1(\bR^d)}.
    \end{equation*}
By \cite[Lemma 3.3]{D23}, $H_{q,\omega_2}^1(\bR^d)$ can be represented as the complex interpolation space;
\begin{equation*}
    H_{q,\omega_2}^1(\bR^d) = [L_{q,\omega_2}(\bR^d),H_{q,\omega_2}^{\sigma}(\bR^d)]_{1/\sigma},
\end{equation*}
which implies that
\begin{align*}
    \|u\|_{H_{q,\omega_2}^1(\bR^d)} \leq \|u\|_{L_{q,\omega_2}(\bR^d)}^{1-1/\sigma} \|u\|_{H_{q,\omega_2}^{\sigma}(\bR^d)}^{1/\sigma}.
\end{align*}
Let us recall that by \cite[Lemma 3.4]{DJK23},
\begin{equation*}
    \|u\|_{H_{q,\omega_2}^{\sigma}(\bR^d)} \approx \left( \|u\|_{L_{q,\omega_2}(\bR^d)} + \|(-\Delta)^{\sigma/2}u\|_{L_{q,\omega_2}(\bR^d)} \right),
\end{equation*}
where the equivalence constants depend only on $d,q$, and $K_0$, and are independent of $\sigma$.
Thus, by Young's inequality,
\begin{align*}
    \|u\|_{H_{q,\omega_2}^1(\bR^d)} &\leq N\varepsilon \|u\|_{H_{q,\omega_2}^{\sigma}(\bR^d)} + N\frac{\sigma-1}{\sigma} \sigma^{1/(1-\sigma)} \varepsilon^{1/(1-\sigma)} \|u\|_{L_{q,\omega_2}(\bR^d)}
    \\
    &\leq N \varepsilon \|(-\Delta)^{\sigma/2}u\|_{L_{q,\omega_2}(\bR^d)} + N \varepsilon^{1/(1-\sigma)} \|u\|_{L_{q,\omega_2}(\bR^d)},
\end{align*}
where $N$ is independent of $\sigma$. Choosing $\varepsilon>0$ appropriately again, we get \eqref{eq7191919}.

Now we prove \eqref{eq7191852}. Assume that $p=q$. For general $u\in C_c^\infty([0,T]\times \bR^d)$, $u(t,\cdot)$ satisfies \eqref{eq7191919} for every $t\in[0,T]$. 
Thus, by raising both sides of this inequality to the power of $p$, multiplying by $\omega_1(t)$, and integrating with respect to $t$, the case $p=q$ is obtained.
For general $p\neq q$, one just needs to use the extrapolation theorem (see e.g. \cite[Theorem 2.5]{DK18}).
The lemma is proved.
\end{proof}

\begin{lemma}
    Let $\tilde{\sigma}\in(0,2)$, $\sigma\in[\tilde{\sigma},2)$, $p\in(1,\infty)$, $T\in(0,\infty)$, $K_0>0$, and $[\omega_1]_{A_p(\bR)}\leq K_0$. Let $q\in(1,\infty)$ and $\tilde{q}\in(1,\infty]$ satisfy
    \begin{equation*}
        \tilde{q}>q, \quad 1/\tilde{q} > 1/q-\tilde{\sigma}/d.
    \end{equation*}
    Then for any $u\in C_c^\infty([0,T]\times \bR^d)$,
    \begin{equation} \label{eq7201656}
        \|u\|_{\bm{\fL}_{\tilde{q},p,\omega_1}(T)} \leq N \left(\|u\|_{\bm{\fL}_{q,p,\omega_1}(T)} + \|(-\Delta)^{\sigma/2}u\|_{\bm{\fL}_{q,p,\omega_1}(T)} \right),
    \end{equation}
    where $N=N(d,q,\tilde{q},\tilde{\sigma})$.
\end{lemma}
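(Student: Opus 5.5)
The estimate \eqref{eq7201656} is a vector-valued Sobolev embedding, with values in $Y:=L_{p,\omega_1}(T)$. I will prove it by writing $u$ through a Bessel-type kernel that acts only in $x$, and then applying Minkowski's and Young's inequalities in $x$.

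Fix $u\in C_c^\infty([0,T]\times\bR^d)$ and set $g:=u+(-\Delta)^{\sigma/2}u$. For each $t$ this gives $u(t,\cdot)=G_\sigma * g(t,\cdot)$, where $G_\sigma$ is the kernel with Fourier transform $(1+|\xi|^\sigma)^{-1}$. Since $G_\sigma$ is independent of $t$, the Minkowski inequality in $Y$ yields
\begin{equation*}
\|u(\cdot,x)\|_Y\leq \int_{\bR^d}|G_\sigma(y)|\,\|g(\cdot,x-y)\|_Y\,dy,
\end{equation*}
so $\|u(\cdot,x)\|_Y\leq (|G_\sigma|*h)(x)$ with $h(x):=\|g(\cdot,x)\|_Y$. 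Because $\|h\|_{L_q(\bR^d)}\leq \|u\|_{\bm{\fL}_{q,p,\omega_1}(T)}+\|(-\Delta)^{\sigma/2}u\|_{\bm{\fL}_{q,p,\omega_1}(T)}$, Young's convolution inequality with $1+1/\tilde q=1/r+1/q$ reduces the whole statement to the kernel bound
\begin{equation*}
\|G_\sigma\|_{L_r(\bR^d)}\leq N(d,q,\tilde q,\tilde\sigma)\quad\text{uniformly in } \sigma\in[\tilde\sigma,2).
\end{equation*}
The hypothesis $1/\tilde q>1/q-\tilde\sigma/d$ gives exactly $r<d/(d-\tilde\sigma)$ when $\tilde\sigma<d$, and no restriction on $r$ otherwise. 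The case $\tilde q=\infty$ is included, since then $r=q'$.

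The main obstacle is this kernel bound with a constant robust in $\sigma$. I will use subordination: $(1+\lambda^{\sigma/2})^{-1}$ is a Stieltjes-type function of $\lambda=|\xi|^2$, being a composition of the Bernstein function $\lambda^{\sigma/2}$ with $1/(1+\cdot)$. Hence $G_\sigma$ is a superposition of Gaussian heat kernels with positive weights, and in particular $G_\sigma\geq0$ and $\int G_\sigma=1$. The pointwise bounds I will aim for are
\begin{equation*}
G_\sigma(x)\leq N|x|^{\sigma-d}\ \text{for } |x|\leq1\ (\text{with } d>\sigma),\qquad G_\sigma(x)\leq N|x|^{-d-\sigma}\ \text{for } |x|\geq1,
\end{equation*}
with logarithmic or bounded modifications when $d\leq\sigma$. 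To get them, I will write $G_\sigma=\int_0^\infty e^{-s}p_s\,ds$, where $p_s$ is the $\sigma$-stable heat kernel, and use the two-sided stable kernel estimates $p_s(x)\leq N\min\{s^{-d/\sigma},\,s|x|^{-d-\sigma}\}$. The constants in these estimates are uniform in $\sigma\in[\tilde\sigma,2)$, because the large-$|x|$ coefficient carries a factor $(2-\sigma)$ that only helps.

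Integrating these bounds in $s$ gives the two pointwise estimates above. Integrability of $G_\sigma^r$ near $0$ then needs $r(d-\sigma)<d$, which follows from $r(d-\tilde\sigma)<d$ since $\sigma\geq\tilde\sigma$. Integrability at infinity holds for every $r\geq1$. If verifying the uniform stable-kernel constants directly turns out to be delicate, my fallback is to take $\tilde q$ slightly smaller and interpolate. For that I would use the case $\tilde q=q$, which is trivial because $\|G_\sigma\|_{L_1}=1$, together with the Hardy–Littlewood–Sobolev bound for the Riesz potential $I_{\tilde\sigma}$. The reason is that $(1+|\xi|^\sigma)^{-1}\lesssim(1+|\xi|^2)^{-\tilde\sigma/2}$ in the multiplier sense; I would justify this via the Mikhlin multiplier theorem, applied in $x$ to $Y$-valued functions, which is valid because $Y$ is a UMD space.
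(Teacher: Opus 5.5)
Your reduction is correct: write $u=G_\sigma*g$, use Minkowski in $Y$ to get $\|u(\cdot,x)\|_Y\le(|G_\sigma|*h)(x)$, then apply Young with $1+1/\tilde q=1/r+1/q$. This is a genuinely different route from the paper. The paper proves the scalar bound $\|G*v\|_{L_{\tilde q}}\le N\|v\|_{L_q}$ via the Mikhlin theorem for $(1+|\xi|^2)^{\bar\sigma/2}(1+|\xi|^\sigma)^{-1}$ and the Bessel-potential embedding $H^{\bar\sigma}_q\subset L_{\tilde q}$, and then makes the same Minkowski step.

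\textbf{The gap.} The gap is in how you get $\sup_{\sigma\in[\tilde\sigma,2)}\|G_\sigma\|_{L_r}<\infty$. The pointwise bound $G_\sigma(x)\le N|x|^{\sigma-d}$ on $B_1$ is not uniform in $\sigma$ as $\sigma$ approaches $d$. This happens inside the admissible range in two cases:
\begin{itemize}
\item $d=2$, $\sigma\to2$, which is exactly the robustness regime;
\item $d=1$, $\sigma\to1$; here your ``bounded modification'' for $\sigma>1$ has size $\sigma/(\sigma-1)$.
\end{itemize}
Integrating your own bounds $p_s\le N\min\{s^{-d/\sigma},s|x|^{-d-\sigma}\}$ gives $G_\sigma(x)\lesssim|x|^{\sigma-d}+\frac{\sigma}{d-\sigma}(|x|^{\sigma-d}-1)+1$. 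The factor $(d-\sigma)^{-1}$ is not an artifact. Suppose $G_\sigma(x)\le N|x|^{\sigma-2}$ held uniformly for $d=2$. Letting $\sigma\to2$ at fixed $x\neq0$ would give $G_2(x)\le N$, contradicting the logarithmic singularity of the Bessel kernel of $(1-\Delta)^{-1}$. So your integration of $G_\sigma^r$ over $B_1$ yields a constant that blows up precisely where the lemma claims robustness.

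The uniform tail bound $p_s(x)\le Ns|x|^{-d-\sigma}$ is also only asserted. The factor $2-\sigma$ controls the asymptotic tail coefficient, not a bound uniform in $x$.

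\textbf{The repair.} The target bound is true, and your own subordination formula $G_\sigma=\int_0^\infty e^{-s}p_s\,ds$ gives it without any pointwise estimates. Use Minkowski in $s$ and the scaling $p_s(x)=s^{-d/\sigma}p_1(s^{-1/\sigma}x)$:
\[
\|G_\sigma\|_{L_r(\bR^d)}\le\int_0^\infty e^{-s}\|p_s\|_{L_r(\bR^d)}\,ds=\Gamma(1-\theta)\,\|p_1\|_{L_r(\bR^d)},\qquad \theta:=\frac{d}{\sigma}\Big(\frac1q-\frac1{\tilde q}\Big)\le\frac{d}{\tilde\sigma}\Big(\frac1q-\frac1{\tilde q}\Big)<1 .
\]
Moreover,
\[
\|p_1\|_{L_r}\le\|p_1\|_{L_\infty}^{1-1/r}\|p_1\|_{L_1}^{1/r}\le\Big((2\pi)^{-d}\int_{\bR^d}e^{-|\xi|^\sigma}d\xi\Big)^{1-1/r},
\]
which is bounded uniformly for $\sigma\in[\tilde\sigma,2)$. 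This also covers $\tilde q=\infty$ and $d\le\tilde\sigma$.

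\textbf{Comparison.} With this replacement your proof is complete. Compared with the paper's, it is more elementary and makes the $\sigma$-robustness explicit, since it involves only $\Gamma(1-\theta)$ and $p_1(0)$. It also works with $|G_\sigma|$, whereas the paper's step $\|u\|_{\bm{\fL}_{\tilde q,p,\omega_1}(T)}\le\|G*g\|_{L_{\tilde q}}$ tacitly uses $G\ge0$. The paper's multiplier-plus-embedding route has the advantage that it is reused directly for the companion H\"older estimate \eqref{eq7201657}.

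Your fallback (Mikhlin plus Hardy--Littlewood--Sobolev) is essentially the paper's argument. The UMD remark is unnecessary, since after your Minkowski step everything is scalar.
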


\begin{proof}
Let $\bar{\sigma}\in(0,\tilde{\sigma}]$ be such that $\bar{\sigma}=\tilde{\sigma}$ if $\tilde{q}=\infty$, and
\begin{equation*}
    1/\tilde{q}= 1/q-\bar{\sigma}/d
\end{equation*}
if $\tilde{q}\in(1,\infty)$.
    We notice that the multipliers
    \begin{equation*}
        \frac{(1+|\xi|^2)^{\sigma/2}}{1+|\xi|^{\sigma}}, \quad \frac{(1+|\xi|^2)^{\bar{\sigma}/2}}{(1+|\xi|^2)^{\sigma/2}}
    \end{equation*}
    satisfy the assumptions of Mikhlin multiplier theorem (see e.g. \cite[Theorem 6.2.7]{G14}), which yields that for $v\in C_c^\infty(\bR^d)$,
    \begin{align*}
        \|(1-\Delta)^{\bar{\sigma}/2}(1+(-\Delta)^{\sigma/2})^{-1}v\|_{L_{q}(\bR^d)} \leq N \|v\|_{L_{q}(\bR^d)}.
    \end{align*}
Moreover, by keeping track of the multiplier bounds in the application of the Mikhlin multiplier theorem, one can verify that $N$ is independent of $\sigma$ and $\bar{\sigma}$.
By the Sobolev embedding theorems (see e.g. \cite[Theorems 13.8.1 and 13.8.7]{Krylec}),
\begin{align*}
        \|(1+(-\Delta)^{\sigma/2})^{-1}v\|_{L_{\tilde{q}}(\bR^d)} &\leq N(\bar{\sigma}) \|(1-\Delta)^{\bar{\sigma}/2}(1+(-\Delta)^{\sigma/2})^{-1}v\|_{L_{q}(\bR^d)} 
        \\
        &\leq N \|v\|_{L_{q}(\bR^d)}.
    \end{align*}
    In other words,
    \begin{align} \label{eq7201654}
        \|G*v\|_{L_{\tilde{q}}(\bR^d)} \leq N \|v\|_{L_{q}(\bR^d)},
    \end{align}
    where $*$ denotes the convolution, and $G$ is the kernel of $(1+(-\Delta)^{\sigma/2})^{-1}$.

    Now we consider $u\in C_c^\infty([0,T]\times \bR^d)$. Note that
    \begin{equation} \label{eq7210122}
        u(t,x) = G*[(1+(-\Delta)^{\sigma/2})u](t,x),
    \end{equation}
    where $*$ is convolution in the spatial variable. By the Minkowski inequality and \eqref{eq7201654}, letting
    \begin{equation*}
        g(x) := \|(1+(-\Delta)^{\sigma/2})u(\cdot,x)\|_{L_{p,\omega_1}(T)},
    \end{equation*}
    we have
    \begin{align*}
        \|u\|_{\bm{\fL}_{\tilde{q},p,\omega_1}(T)} \leq \|G*g\|_{L_{\tilde{q}}(\bR^d)} \leq N\|g\|_{L_{q}(\bR^d)} = N \|(1+(-\Delta)^{\sigma/2})u\|_{\bm{\fL}_{q,p,\omega_1}(T)}.
    \end{align*}
    Then \eqref{eq7201656} follows from the triangle inequality.
    The lemma is proved.
\end{proof}

\begin{lemma}
    Let $\tilde{\sigma}\in(0,2)$, $\sigma\in[\tilde{\sigma},2)$, $p\in(1,\infty)$, $T\in(0,\infty)$, $K_0>0$, and $[\omega_1]_{A_p(\bR)}\leq K_0$. Assume that $q>1$, $q\in(d/\tilde{\sigma}, 2d/\tilde{\sigma})$ and denote
    \begin{equation*}
        \vartheta:=\tilde{\sigma}-d/q \in (0,\tilde{\sigma}/2).
    \end{equation*}
    Let $u\in C_c^\infty([0,T]\times \bR^d)$. Then for $x,y\in \bR^d$,
    \begin{equation} \label{eq7201657}
        \|u(\cdot,x)-u(\cdot,y)\|_{L_{p,\omega_1}(T)} \leq N |x-y|^{\vartheta} \left(\|u\|_{\bm{\fL}_{q,p,\omega_1}(T)} + \|(-\Delta)^{\sigma/2}u\|_{\bm{\fL}_{q,p,\omega_1}(T)} \right),
    \end{equation}
    where $N=N(d,q,\tilde{\sigma})$.
\end{lemma}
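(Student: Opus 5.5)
The plan follows the proof of \eqref{eq7201656}: reduce to a scalar Hölder estimate for Bessel-type potentials, and pass to the $L_{p,\omega_1}(T)$-valued setting with Minkowski's inequality. By \eqref{eq7210122}, $u(t,x)=G*F(t,x)$ with $F:=(1+(-\Delta)^{\sigma/2})u$. Hence
\begin{equation*}
u(t,x)-u(t,y)=\int_{\bR^d}\big(G(x-z)-G(y-z)\big)F(t,z)\,dz .
\end{equation*}
Taking the $L_{p,\omega_1}(T)$ norm in $t$ and applying Minkowski's inequality gives
\begin{equation*}
\|u(\cdot,x)-u(\cdot,y)\|_{L_{p,\omega_1}(T)}\le \int_{\bR^d}|G(x-z)-G(y-z)|\,g(z)\,dz,
\end{equation*}
where $g(z):=\|F(\cdot,z)\|_{L_{p,\omega_1}(T)}$. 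Since $\|g\|_{L_q(\bR^d)}=\|F\|_{\bm{\fL}_{q,p,\omega_1}(T)}$, it suffices to prove the scalar statement that $v=G*g$ satisfies $|v(x)-v(y)|\le N|x-y|^{\vartheta}\|g\|_{L_q}$ for $g\ge0$ in $L_q$, with $N=N(d,q,\tilde\sigma)$. The triangle inequality then finishes the proof.

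For the scalar statement I would use Hölder's inequality. It reduces matters to showing
\begin{equation*}
\|G(\cdot+h)-G\|_{L_{q'}(\bR^d)}\le N|h|^{\tilde\sigma-d/q},
\end{equation*}
at least for $|h|\le1$. For $|h|\ge1$ the bound is trivial, since $\|G\|_{L_{q'}}\le N$ and $\vartheta>0$. To get the kernel bounds I would write $G=\int_0^\infty e^{-s}p_\sigma(s,\cdot)\,ds$, where $p_\sigma$ is the $\sigma$-stable heat kernel. The standard estimates are $|G(x)|\le N|x|^{\sigma-d}$ and $|\nabla G(x)|\le N|x|^{\sigma-d-1}$ near the origin, with fast decay at infinity. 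I would split the integral into $|z|\le2|h|$, where each term is estimated separately using $|z|^{(\sigma-d)q'}$ (this is integrable because $q>d/\sigma$), and $|z|>2|h|$, where the mean value theorem gives $|h|\,|z|^{\sigma-d-1}$, which is $L_{q'}$-integrable away from the ball. Both pieces scale like $|h|^{\sigma-d/q}\le|h|^{\tilde\sigma-d/q}$ when $|h|\le1$.

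The main obstacle is that $N$ must not depend on $\sigma\in[\tilde\sigma,2)$, and the pointwise bounds on $G$ degenerate as $\sigma\to2$. A cleaner route, which I would try first, avoids explicit kernels. For fixed $t$-norm, embed $g\mapsto G*g$ into $H^{\tilde\sigma}_q$ via the Mikhlin multiplier $(1+|\xi|^2)^{\tilde\sigma/2}/(1+|\xi|^\sigma)$, whose bounds are uniform in $\sigma\ge\tilde\sigma$ as noted in the previous lemma. Then apply the classical Morrey-type embedding $H^{\tilde\sigma}_q\subset C^{\tilde\sigma-d/q}$. This is valid because $0<\tilde\sigma-d/q<1$, which follows from $q\in(d/\tilde\sigma,2d/\tilde\sigma)$, and its constant depends only on $(d,q,\tilde\sigma)$.

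To make this vector-valued, I would apply it to the scalar function $v=G*g$ with $g\ge0$, using the pointwise Minkowski bound above together with $|G(x-z)-G(y-z)|$ and positivity of $G$. The key identity is that the difference bound for $v$ is exactly $\int|G(x-z)-G(y-z)|g(z)\,dz$. That requires the Hölder estimate for the kernel difference in $L_{q'}$, which I would extract by duality from the embedding: the Morrey bound for all $g$ with $\|g\|_{L_q}\le1$ (signed $g$) is equivalent to the $L_{q'}$ bound on $G(x-\cdot)-G(y-\cdot)$.
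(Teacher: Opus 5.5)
Your argument is correct, but it passes from the scalar estimate to the $L_{p,\omega_1}(T)$-valued one differently from the paper.

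Both proofs rest on the same scalar fact, $[G*v]_{C^\vartheta(\bR^d)}\le N\|v\|_{L_q(\bR^d)}$ with $N=N(d,q,\tilde\sigma)$. It comes from the $\sigma$-uniform Mikhlin multiplier $(1+|\xi|^2)^{\tilde\sigma/2}/(1+|\xi|^\sigma)$ together with the Sobolev embedding $H^{\tilde\sigma}_q\subset C^{\tilde\sigma-d/q}$.

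\textbf{The paper dualizes in time.} For $\phi$ in the unit ball of $L_{p',\omega_1'}(T)$, it applies the scalar estimate to the single signed function $h=\int_0^T(1+(-\Delta)^{\sigma/2})u(t,\cdot)\,\phi(t)\,dt$. By H\"older in $t$, $\|h\|_{L_q}\le\|(1+(-\Delta)^{\sigma/2})u\|_{\bm{\fL}_{q,p,\omega_1}(T)}$. It then takes the supremum over $\phi$.

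\textbf{You dualize in space.} You turn the scalar estimate into the kernel bound $\|G(x-\cdot)-G(y-\cdot)\|_{L_{q'}}\le N|x-y|^\vartheta$. This step is legitimate: since $G\in L_1$, the functional $g\mapsto\int(G(x-z)-G(y-z))g(z)\,dz$ is defined on $C_c^\infty$, and its boundedness in the $L_q$ norm identifies the kernel difference as an $L_{q'}$ function. You then finish with Minkowski in $t$ and H\"older in $z$.

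\textbf{What each buys.} The paper's route is shorter and never needs to say anything about the kernel itself. Yours costs one extra Riesz-representation step. In exchange it yields a reusable, $\sigma$-uniform $L_{q'}$ estimate on kernel differences. After that, the vector-valued bound follows for any Banach-space-valued right-hand side without dualizing the target space.

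\textbf{One sentence to discard.} In your last paragraph, you suggest applying the embedding to $v=G*g$ with $g\ge0$ and invoking positivity of $G$. That does not control $\int|G(x-z)-G(y-z)|\,g(z)\,dz$. The difference $G(x-\cdot)-G(y-\cdot)$ has integral zero, so it changes sign, and $|v(x)-v(y)|$ can be much smaller than that integral. So the claimed ``key identity'' is false. What actually carries the argument is the duality over signed $g$, which you state immediately afterwards; that is the step to keep.
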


\begin{proof}
    By the Sobolev embedding theorem (see e.g. \cite[Theorem 13.8.1]{Krylec}), for $v\in C_c^\infty(\bR^d)$,
\begin{align*}
        \left[(1+(-\Delta)^{\sigma/2})^{-1}v\right]_{C^{\vartheta}(\bR^d)} &\leq N(\tilde{\sigma}) \|(1-\Delta)^{\tilde{\sigma}/2}(1+(-\Delta)^{\sigma/2})^{-1}v\|_{L_{q}(\bR^d)}.
    \end{align*}
    Thus, as in \eqref{eq7201654},
    \begin{align} \label{eq7311805}
        \left[G*v\right]_{C^{\vartheta}(\bR^d)} \leq N \|v\|_{L_{q}(\bR^d)}.
    \end{align}
Let $\phi\in L_{p',\omega_1'}(T)$ such that
\begin{equation*}
    \|\phi\|_{L_{p',\omega_1'}(T)} \leq 1,
\end{equation*}
where $1/p+1/p'=1$ and $\omega_1':=\omega_1^{-1/(p-1)}\in A_{p'}(\bR)$.
By \eqref{eq7210122},
\begin{align*}
    \int_0^T \left(u(t,x)-u(t,y)\right)\phi(t) dt = (G*h)(x) -(G*h)(y),
\end{align*}
where
\begin{equation*}
    h(x)= \int_0^T (1+(-\Delta)^{\sigma/2})u(t,x) \phi(t) dt.
\end{equation*}
    Thus, using \eqref{eq7311805} and H\"older's inequality,
    \begin{align*}
        \left|\int_0^T \left(u(t,x)-u(t,y)\right)\phi(t) dt\right| &\leq N |x-y|^{\vartheta} \|h\|_{L_q(\bR^d)} 
        \\
        &\leq N|x-y|^{\vartheta} \|(1+(-\Delta)^{\sigma/2})u\|_{\bm{\fL}_{q,p,\omega_1}(T)}.
    \end{align*}
    Since $\phi\in L_{p',\omega_1'}(T)$ is arbitrary,
    \begin{align*}
        \|u(\cdot,x)-u(\cdot,y)\|_{L_{p,\omega_1}(T)} \leq N|x-y|^{\vartheta} \|(1+(-\Delta)^{\sigma/2})u\|_{\bm{\fL}_{q,p,\omega_1}(T)}.
    \end{align*}
    Then \eqref{eq7201657} follows from the triangle inequality.
    The lemma is proved.
\end{proof}





\end{document}